\newtheorem{theorem}{Theorem}[subsection]
\newtheorem{lemma}[theorem]{Lemma}
\newtheorem*{gaftA}{Theorem}
\newtheorem*{proposition*}{Proposition}
\newtheorem*{corollary*}{Corollary}
\newtheorem*{gaftB}{Theorem}
\newtheorem{corollary}[theorem]{Corollary}
\newtheorem{proposition}[theorem]{Proposition}
\theoremstyle{definition}
\newtheorem{definition}[theorem]{Definition}
\newtheorem*{definition*}{Definition}
\newtheorem{remark}[theorem]{Remark}
\newtheorem{example}[theorem]{Example}
\newcommand{\C}{\mathscr{C}}
\newcommand{\CC}{\mathfrak{C}}
\newcommand{\A}{\mathscr{A}}
\newcommand{\B}{\mathscr{B}}
\newcommand{\h}{\mathrm{h}}
\newcommand{\MM}{\mathscr{M}}
\newcommand{\NN}{\mathscr{N}}
\newcommand{\D}{\mathscr{D}}
\renewcommand{\Set}{\mathrm{Set}}
\newcommand{\N}{\mathbb N}
\newcommand{\op}{\mathrm{op}}
\newcommand{\colim}{\mathrm{colim}}
\newcommand{\map}{\mathrm{map}}
\newcommand{\Map}{\mathrm{Map}}
\newcommand{\Sn}{\mathcal{S}_{< n}}
\renewcommand{\S}{\mathcal{S}}
\newcommand{\id}{\mathrm{id}}
\newcommand{\sk}{\mathrm{sk}}
\newcommand{\Idem}{\mathrm{Idem}}
\newcommand{\Nec}{\mathcal{N}ec}
\newcommand{\SSet}{\mathscr{SS}et}
\newcommand{\SCat}{\mathscr{SC}at}
\title[Adjoint Functor Theorems and Higher Brown Representability]{Higher weak (co)limits, Adjoint functor theorems, and higher Brown representability}	
\author{Hoang Kim Nguyen}
\address{\newline
H.~K.~Nguyen \newline
Fakult\"{a}t f\"ur Mathematik \\
Universit\"{a}t Regensburg \\
93040 Regensburg, Germany}
\email{hoang-kim.nguyen@ur.de} 
\author{George Raptis}
\address{\newline
G. Raptis \newline
Fakult\"{a}t f\"ur Mathematik \\
Universit\"{a}t Regensburg \\
93040 Regensburg, Germany}
\email{georgios.raptis@ur.de}
\author{Christoph Schrade}
\address{\newline
C. Schrade \newline
Mathematisches Institut \\
WWU M\"{u}nster \\
48149 M\"{u}nster, Germany}
\email{schrade.christoph@gmail.com}
\begin{document}
\begin{abstract}
We prove general adjoint functor theorems for weakly (co)complete $n$-categories. This class of $n$-categories includes the homotopy $n$-categories of (co)complete $\infty$-categories, so these $n$-categories do not admit all small (co)limits in general. We also introduce Brown representability for (homotopy) $n$-categories and prove a Brown representability theorem for localizations of compactly generated $n$-categories. This class of $n$-categories includes the homotopy $n$-categories of presentable $\infty$-categories if $n \geq 2$, and the homotopy $n$-categories of presentable stable $\infty$-categories for any $n \geq 1$.  
\end{abstract}

\maketitle
\setcounter{tocdepth}{1} 
\tableofcontents

\section{Introduction}

Adjoint functor theorems (AFTs) typically characterize left (resp. right) adjoint functors which are defined on suitable cocomplete (resp. complete) categories. A natural necessary (and sometimes also sufficient) condition in these theorems is that the functor preserves small colimits (resp. small limits). 
Familiar and important examples of this type of AFT in ordinary category theory include: Freyd's \emph{General} and \emph{Special Adjoint Functor Theorems} (GAFT and SAFT) (see, for example, \cite{ML}) and the \emph{Left} and \emph{Right Adjoint Functor Theorems} in the context of locally presentable categories (see \cite{AR}). 

Most of these adjoint functor theorems have been generalized to the context of $\infty$-categories \cite{HTT, NRS}. AFTs are especially useful in higher category theory because providing an adjoint by an explicit construction is practically impossible in general in this context -- when such a construction is possible, this usually arises from an explicit model-dependent construction in a stricter context (e.g. model categories, simplicial categories, etc.). 
Lurie \cite{HTT} generalized the AFTs for locally presentable categories to presentable $\infty$-categories. In our previous work \cite{NRS}, we proved two versions of the GAFT for $\infty$-categories \cite[Theorems 3.2.5 and 3.2.6]{NRS}, a special version of the SAFT \cite[Theorem 4.1.3]{NRS}, and we showed that these also recover Lurie's AFTs for presentable $\infty$-categories \cite[Section 4]{NRS}. All these results are still in the context of (finitely) cocomplete (resp. complete) $\infty$-categories. 

On the other hand, a different type of adjoint functor theorem arises in the context of Brown representability \cite{Br, He, NRS}. These AFTs characterize left adjoint functors which are defined on suitable ordinary categories that are typically not finitely cocomplete. Important examples of such categories, which \emph{satisfy Brown representability}, include the homotopy categories of presentable stable $\infty$-categories \cite{HA, NRS}. These categories admit small coproducts, but only weak pushouts in general. The AFT in this case characterizes left adjoint functors in terms of the preservation of small coproducts and weak pushouts (see, for example, \cite[Section 5]{NRS}). 

\medskip

The question naturally arises whether there are more general AFTs which apply to higher (= $(n,1)$-)categories that are not necessarily finitely cocomplete (resp. complete). Our first goal in this paper is to prove generalizations of the GAFTs in \cite{NRS} to suitable $n$-categories (= $(n, 1)$-categories) that are not necessarily finitely cocomplete (resp. finitely complete).  The motivating example is the homotopy $n$-category of a (finitely) cocomplete (resp. complete) $\infty$-category. We recall that the homotopy $n$-category $\h_n\C$ of an $\infty$-category $\C$ is the $n$-category which is obtained from $\C$ after truncating its mapping spaces at level $n$ -- this is the usual homotopy category when $n=1$. The general construction and the properties of the homotopy $n$-category were studied in \cite{HTT}. Homotopy $n$-categories were studied further in \cite{Ra} in connection with a higher categorical notion of weak (co)limit that was introduced mainly for this purpose.  Similarly to the usual homotopy category, the homotopy $n$-category $\h_n\C$ of a (co)complete $\infty$-category $\C$ does not admit all small (co)limits in general, but it does admit \emph{better} (= higher) weak (co)limits as $n$ grows. The general properties of the homotopy $n$-categories suggest the following class of $n$-categories as a convenient context for refined versions of the GAFTs (see  \cite[Section 3]{Ra}). 

 \begin{definition*} [Definition \ref{weakly_cocomplete_def}]
Let $n \geq 1$ be an integer or $n=\infty$. A \emph{(finitely) weakly complete $n$-category} is an $n$-category $\C$ which  admits small (finite) products and weak pullbacks of order $(n-1)$. (There is an obvious dual notion of a (finitely) weakly cocomplete $n$-category.)
\end{definition*}

The definition and properties of higher weak (co)limits will be reviewed and developed further in \textbf{Section \ref{section:higher_weak_colimits}}.
We note that the homotopy $n$-category of a (finitely) complete $\infty$-category is a (finitely) weakly complete $n$-category. Moreover, a (finitely) weakly complete $\infty$-category is simply a (finitely) complete $\infty$-category. We emphasize here the special double role of $n$ in the definition: 
a $k$-category $\C$ is a (finitely) weakly complete $n$-category, for $n > k$, if and only if $\C$ is a (finitely) complete $k$-category. Thus, the definition essentially distinguishes the various categorical levels. 

\medskip

Our main adjoint functor theorems (Theorem \ref{nGAFTfin} and Theorem \ref{nGAFT}) generalize the corresponding GAFTs for $\infty$-categories in \cite[Theorems 3.2.5 and 3.2.6]{NRS} to (finitely) weakly complete $n$-categories. We refer to \textbf{Section \ref{AFTs}} and Definition \ref{solution_set_conditions} for the precise definitions of the $h$-initial object condition and of the solution set condition that appear in the statements below. 

\begin{gaftA}[$n$-GAFT$_{\mathrm{fin}}$ -- Theorem \ref{nGAFTfin}]
Let $G \colon \D \to \C$ be a functor between $n$-categories,  where $n \geq 1$ is an integer or $n=\infty$. Suppose that $\D$ is a finitely weakly complete $n$-category. Then $G$ admits a left adjoint if and only if $G$ preserves finite products, weak pullbacks of order $(n-1)$, and satisfies the $h$-initial object condition.
\end{gaftA}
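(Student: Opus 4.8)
The plan is to reduce the $n$-GAFT to a fiberwise criterion for the existence of a left adjoint, following the strategy that worked for $\infty$-categories in \cite{NRS} but now tracking the role of the truncation level $n$ throughout. Recall that a functor $G \colon \D \to \C$ admits a left adjoint if and only if for every object $c \in \C$ the comma $n$-category $c \downarrow G$ admits an $h$-initial object, i.e.\ an object that is ``weakly initial up to the appropriate level''. So the first step is to fix $c \in \C$ and analyze the comma construction: I would show that if $\D$ is finitely weakly complete and $G$ preserves finite products and weak pullbacks of order $(n-1)$, then $c \downarrow G$ inherits enough of this structure, namely that $c \downarrow G$ is again finitely weakly complete (as an $n$-category), with the forgetful functor $c \downarrow G \to \D$ creating finite products and weak pullbacks of order $(n-1)$. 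This is the technical heart that converts the preservation hypotheses on $G$ into intrinsic completeness properties of the comma categories.

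Granting this, the problem becomes purely internal: I must prove that a finitely weakly complete $n$-category that satisfies the $h$-initial object condition actually possesses an $h$-initial object. This is where the $h$-initial object condition enters as the analogue of Freyd's solution set condition. The plan is to use the solution-set data to build, by a finite-product construction, a single ``weakly initial family'' object, and then to refine it to a genuine $h$-initial object by iterating a weak-pullback/equalizer argument. Concretely, in the classical GAFT one passes from a weakly initial object to an initial object by taking an equalizer of all endomorphisms; in the weak $n$-categorical setting the equalizer is replaced by weak pullbacks of order $(n-1)$, and the key point is that iterating the weak-pullback refinement exactly $n$ times kills all the homotopy in degrees $\leq n$ of the relevant mapping spaces, which is precisely what is needed for $h$-initiality at level $n$. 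The definition of higher weak (co)limits developed in Section \ref{section:higher_weak_colimits} should supply the bookkeeping that makes ``order $(n-1)$'' the correct number of iterations.

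The main obstacle I anticipate is the second step rather than the comma-category bookkeeping: controlling the homotopy-coherence of the refinement process. In the $1$-categorical case one equalizer suffices because there is no higher homotopy, but here each weak pullback only improves connectivity of the comparison maps by one level, and one must verify that the tower of weak pullbacks converges to an honest $h$-initial object without the higher cells reintroducing obstructions. I would handle this by setting up an explicit induction on $n$, where at each stage I use the defining universal property of weak pullbacks of order $(n-1)$ (from Section \ref{section:higher_weak_colimits}) to lift maps and contract the relevant mapping-space fibers one homotopy level at a time. I expect that the compatibility between the finite-product step (which handles the ``set of objects'' part of the solution set) and the iterated weak-pullback step (which handles the ``morphisms'' part) will require care, since the product of the solution-set objects must itself retain the weak-pullback structure used in the refinement. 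Finally, the converse direction---that a left adjoint $G$ preserves finite products, weak pullbacks of order $(n-1)$, and satisfies the $h$-initial object condition---should be comparatively routine, following from the fact that right adjoints preserve limits together with the observation that they preserve the relevant higher weak limits as established in Section \ref{section:higher_weak_colimits}.
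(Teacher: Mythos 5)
Your proposal rests on a misremembered reduction and then drifts into the proof of the wrong theorem. Proposition \ref{characterization_adjoint_functor} says that $G$ admits a left adjoint if and only if each slice $G_{c/}$ has an \emph{initial} object (all mapping spaces out of it contractible), not an $h$-initial one; moreover, the $h$-initial object in each $G_{c/}$ is already \emph{given} by the hypothesis (Definition \ref{solution_set_conditions}(1)), so there is nothing to ``build'' from solution-set data. The entire mathematical content of the converse direction is the upgrade ``$h$-initial $\Rightarrow$ initial'' inside each slice, which your sketch never engages. The paper does this via Criterion A (Lemma \ref{criterionA}): since $G_{c/}$ is an $n$-category, $\map(x,y)$ is $(n-1)$-truncated, so it suffices that $\map_{\S}(S^k, \map(x,y))$ is connected for $0 \leq k < n$; this follows from a \emph{single} application of weak cotensors $y^{S^k_w}$ (weak limits of order $0$ of constant $S^k$-diagrams), because the induced map $\pi_0(\map(x,y^{S^k_w})) \to \pi_0(\map_{\S}(S^k,\map(x,y)))$ is surjective and its domain is a point by $h$-initiality -- no iteration, no tower, no connectivity bootstrapping. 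There is also a quantitative slip in your first step: the slice inherits weak pullbacks only of order $(n-2)$, not $(n-1)$ (Corollary \ref{limits_slices_under_objects}; the drop by one is sharp by Remark \ref{best_possible_estimate2}). Fortunately order $(n-2)$ is exactly what Criterion A needs, since it yields weak $K$-limits of order $0$ for every finite $K$ of dimension $\leq n-1$ (Example \ref{example_CriterionA}).

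The plan you substitute -- a product over solution-set objects followed by an iterated weak-pullback/equalizer refinement -- is the skeleton of the \emph{other} theorem (Theorem \ref{nGAFT}, via Criterion B), and it cannot work under the present hypotheses. With only \emph{finite} products you cannot even form the product over a generally infinite solution set. More seriously, the mechanism ``each weak pullback improves connectivity by one level, so $n$ iterations kill homotopy through degree $n$'' has no basis: weak pullbacks of order $(n-1)$ do not compound connectivity under iteration, and the genuine obstruction in passing from weakly-initial data to an initial object is not finite-degree homotopy but idempotent splitting along an infinite tower -- this is exactly why the paper's Criterion B route requires weak $\N^{\op}$-limits of order $1$ (Proposition \ref{hypoinitial_vs_initial}(3), Lemma \ref{criterionB}), small rather than finite products, local smallness, and the restriction $n \geq 3$, with Example \ref{counterexample_lower_n} showing the solution-set statement genuinely \emph{fails} for $n = 2$. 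Since Theorem \ref{nGAFTfin} holds for all $n \geq 1$, any proof routed through solution sets is structurally doomed; the $h$-initial object condition is a strictly stronger hypothesis than a solution set precisely so that the finite theorem avoids this machinery. Your converse direction is the only part that matches the paper: it follows from Proposition \ref{adjoint_preserve_weak_co_limits}.
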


\begin{gaftB}[$n$-GAFT -- Theorem \ref{nGAFT}]
Let $G \colon \D \to \C$ be a functor between $n$-categories, where $n \geq 2$ is an integer or $n=\infty$. Suppose that $\D$ is a locally small weakly complete $n$-category and that 
$\C$ is 2-locally small. Then $G$ admits a left adjoint if and only if $G$ preserves small products, weak pullbacks of order $(n-1)$, and satisfies the solution set condition.
\end{gaftB}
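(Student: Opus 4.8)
The plan is to follow the classical template of the GAFT: reduce the existence of a left adjoint to the existence of $h$-initial objects in suitable comma $n$-categories, and then produce these $h$-initial objects from an intermediate criterion about weakly complete $n$-categories. The forward implication is routine and I would dispatch it first: if $G$ admits a left adjoint, then $G$ preserves all (weak and strict) limits that exist in $\D$, in particular small products and weak pullbacks of order $(n-1)$; moreover the unit maps $c \to GFc$ furnish, for each $c \in \C$, a one-element solution set, so the solution set condition of Definition \ref{solution_set_conditions} holds automatically. The content is entirely in the converse.

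For the converse, fix an object $c \in \C$ and form the comma $n$-category $\D_c := \C_{c/} \times_{\C} \D$, whose objects are pairs consisting of $d \in \D$ together with a map $c \to Gd$. By the characterization of adjoint functors in terms of comma categories (in its $h$-initial form appropriate to $n$-categories), $G$ admits a left adjoint if and only if $\D_c$ admits an $h$-initial object for every $c$. Thus it suffices to produce such an object. The first key step is to show that $\D_c$ is itself a locally small weakly complete $n$-category: the projection $\D_c \to \D$ should create small products and weak pullbacks of order $(n-1)$, because a (weak) limit of a diagram in $\D_c$ may be formed in $\D$ and then canonically equipped with a structure map from $c$, using that $G$ preserves small products and weak pullbacks of order $(n-1)$ together with the mapping-space description of these limits. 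Local smallness of $\D_c$ follows from local smallness of $\D$ and the $2$-local smallness of $\C$, the latter being exactly what controls the space of structure maps $c \to Gd$.

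The second step is to translate the hypothesis on $G$ into a statement internal to $\D_c$: a solution set for $G$ at $c$ is precisely a small family of objects of $\D_c$ through which every object of $\D_c$ admits a map, i.e. a weakly $h$-initial family. Combining this with the previous step, $\D_c$ is a locally small weakly complete $n$-category (with $n \geq 3$) that admits a weakly $h$-initial set, and I would then invoke the criterion producing an $h$-initial object in precisely this situation (the $n$-categorical analogue of ``complete plus solution set implies initial object''). Feeding the resulting $h$-initial objects back through the comma-category characterization yields the left adjoint.

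The main obstacle lies in the criterion itself, namely the construction of the $h$-initial object out of a weakly $h$-initial set: starting from the product over the solution set, one must iteratively correct the non-uniqueness of maps out of it by forming weak pullbacks of order $(n-1)$, and verify that this correction process terminates after finitely many stages because the relevant higher morphisms become trivial at level $n$. This is exactly where the hypotheses $n \geq 3$ and order $(n-1)$ are indispensable, and it is the heart of the argument. A secondary but genuinely delicate point is the verification in the first step that $\D_c$ inherits weak pullbacks \emph{of the correct order}, since the order of a weak limit is not automatically preserved under the slice construction and must be checked against the behaviour of $G$ on mapping spaces.
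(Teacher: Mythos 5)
Your overall skeleton -- reducing to the slice $n$-categories $G_{c/}$ via Proposition \ref{characterization_adjoint_functor}, identifying the solution set condition with a small weakly initial set in $G_{c/}$, and feeding this into a criterion for weakly complete $n$-categories -- matches the paper's strategy, and your forward direction is the same. But there are two genuine problems in the converse. First, your Step 1 asserts that the projection creates weak pullbacks of order $(n-1)$ in $\D_c = G_{c/}$; in fact the order drops by one: Corollary \ref{limits_slices_under_objects} only yields small products and weak pullbacks of order $(n-2)$ in $G_{c/}$, and Remark \ref{best_possible_estimate2} shows this loss is unavoidable. So $G_{c/}$ is \emph{not} a weakly complete $n$-category in the sense of Definition \ref{weakly_cocomplete_def}, and the criterion you plan to invoke must be proved under the weaker hypotheses (small products plus weak pullbacks of order $(n-2)$), which is exactly how the paper's Criterion B (Lemma \ref{criterionB}) and Corollary \ref{weakly_initial_vs_initial} are formulated. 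Relatedly, Proposition \ref{characterization_adjoint_functor} requires genuine initial objects in $G_{c/}$, not $h$-initial ones; your ``$h$-initial form'' of the characterization is legitimate here only after invoking Criterion A (Lemma \ref{criterionA}, via Example \ref{example_CriterionA} applied to $G_{c/}$), which is reparable but needs to be said.

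Second, and more seriously, the heart of the proof -- passing from a small weakly initial set to an initial object -- is left as a sketch whose proposed mechanism is unsubstantiated. You suggest taking the product over the solution set and iteratively correcting non-uniqueness by weak pullbacks, with termination ``after finitely many stages'' because higher morphisms trivialize at level $n$. Nothing in your outline explains why such an iteration should converge to an object with connected (let alone contractible) mapping spaces out of it; already for $n=\infty$ the classical product-plus-equalizer template fails in the $\infty$-categorical setting, which is why \cite{NRS} took a different route. The paper instead proves that the full subcategory $\C_S$ spanned by the weakly initial set includes \emph{coinitially} into $G_{c/}$ -- an obstruction-theoretic argument whose key technical input is Lemma \ref{technical_lemma}, the statement that $\Delta^0 \ast X \cup_X X \ast \Delta^0 \to \Delta^0 \ast X \ast \Delta^0$ is a categorical $n$-equivalence for $(n-1)$-connected $X$, proved via Dugger--Spivak necklaces -- then builds a cone on this inclusion to obtain a \emph{hypoinitial} object, and finally upgrades hypoinitial to initial by splitting the associated idempotent along a weak $\N^{\op}$-limit of order $1$ (Proposition \ref{hypoinitial_vs_initial}(2)--(3)). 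This last step is where $n \geq 3$ actually enters: it guarantees weak $\N^{\op}$-limits of order $(n-2) \geq 1$ in $G_{c/}$, and Example \ref{counterexample_lower_n} shows the theorem genuinely fails for $n=2$, so any correct argument must consume the hypothesis at exactly this point -- not, as you suggest, to terminate a finite correction process. Without a proof of the criterion along these (or comparably substantial) lines, your proposal has a gap precisely where the theorem's content lies.
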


Let us emphasize here again the special double role of $n$ in these statements: the choice of $n$ both specifies the context for $\D$ and $\C$ and determines appropriate conditions for $G$; the combination of both functions of $n$ plays an important role in the proofs. As a consequence, we note that each of these two theorems states a separate assertion for each $n$. 

The GAFTs of \cite{NRS} -- as well as Freyd's GAFT -- are the special cases of the above statements for $n=\infty$. The general strategy for the proofs of $n$-GAFT$_{\mathrm{fin}}$ and $n$-GAFT is comparable to the strategy used for the proofs of GAFT$_{\mathrm{fin}}$ and GAFT in \cite{NRS}, but there are some interesting differences, too, since the proofs of these refined statements have a stronger homotopy-theoretic (or obstruction-theoretic) flavor. Since the property that a functor $G \colon \D \to \C$ admits a left adjoint is equivalent to the existence of initial objects $(c \to G(d))$ in the slice $\infty$-categories $G_{c/}$ for every $c \in \C$ (see Proposition \ref{characterization_adjoint_functor}), these theorems will be obtained as consequences of suitable criteria for the existence of initial objects -- Lemma \ref{criterionA} (Criterion A) and Lemma \ref{criterionB} (Criterion B), respectively.  These criteria and the proofs of the 
$n$-GAFTs are discussed in \textbf{Section \ref{AFTs}}. Moreover, as a consequence of the $n$-GAFTs, we also obtain the following result about detecting adjoint functors at the level of the (ordinary) homotopy category (this generalizes \cite[Theorem 3.3.1]{NRS}). 

\begin{gaftA}[Theorem \ref{homotopy_detect_adjoint}]
Let $\D$ be a finitely weakly complete $n$-category and $\C$ an $n$-category, where $n \geq 1$ is an integer or $n=\infty$. Let $G \colon \D \to \C$ be a functor which preserves finite products and weak pullbacks of order $(n-1)$. Then $G$ admits a left adjoint if and only if $\h(G) \colon \h(\D) \to \h(\C)$ admits a left adjoint. 
\end{gaftA}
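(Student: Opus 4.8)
The plan is to prove the two implications separately. The forward direction is formal, while the converse rests on the $n$-GAFT$_{\mathrm{fin}}$ (Theorem \ref{nGAFTfin}) together with a comparison between the homotopy category of a slice and the slice of a homotopy category. For the forward direction, suppose $G$ admits a left adjoint $F \colon \C \to \D$. Then there are natural equivalences of mapping spaces $\Map_{\D}(F c, d) \simeq \Map_{\C}(c, G d)$, and applying $\pi_0$ yields natural bijections $\Hom_{\h(\D)}(\h(F) c, d) \cong \Hom_{\h(\C)}(c, \h(G) d)$, so that $\h(F)$ is left adjoint to $\h(G)$. This direction uses neither the weak completeness of $\D$ nor the preservation hypotheses on $G$.

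For the converse, suppose that $\h(G)$ admits a left adjoint $\bar{F} \colon \h(\C) \to \h(\D)$. Since $\D$ is finitely weakly complete and $G$ preserves finite products and weak pullbacks of order $(n-1)$ by hypothesis, the $n$-GAFT$_{\mathrm{fin}}$ reduces the existence of a left adjoint of $G$ to the verification of the $h$-initial object condition, that is, to exhibiting for every $c \in \C$ a suitable initial object in the slice $G_{c/}$ (cf. Proposition \ref{characterization_adjoint_functor} and Definition \ref{solution_set_conditions}). The adjunction $\bar{F} \dashv \h(G)$ provides, for each $c$, an initial object of the comma category $\h(G)_{c/}$ of the induced functor, namely the unit $c \to \h(G)(\bar{F} c)$; in particular this object is weakly initial in $\h(G)_{c/}$.

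The crux is then to transport this information from $\h(G)_{c/}$ to $G_{c/}$. For this I would study the canonical comparison functor $\Phi_c \colon \h(G_{c/}) \to \h(G)_{c/}$, which sends an object $c \to G(d)$ of $G_{c/}$ to its homotopy class. The key observation is that $\Phi_c$ is surjective on objects and full: surjectivity on objects holds because every homotopy class of maps $c \to G(d)$ lifts to an actual map, and fullness holds because a morphism $(d,\eta) \to (d', \eta')$ of $\h(G)_{c/}$ is an element of $\pi_0 \Map_{\D}(d,d')$ lying over the prescribed class $[\eta']$, which is precisely the image under $\Phi_c$ of the corresponding connected component of the homotopy fiber of $\Map_{\D}(d,d') \to \Map_{\C}(c, G d')$ that computes $\Map_{G_{c/}}$. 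Consequently, choosing any lift of the initial object of $\h(G)_{c/}$ produces an object of $\h(G_{c/})$ which is again weakly initial, and this is exactly the data demanded, uniformly in $c$, by the $h$-initial object condition.

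The main obstacle I anticipate lies in the precise control of $\Phi_c$: it is \emph{not} faithful in general, the failure being governed by the fundamental groups $\pi_1(\Map_{\C}(c, G(d')), \eta')$ acting on the homotopy fibers above. This is exactly why lifting an initial object of $\h(G)_{c/}$ yields only a \emph{weakly} initial object of $\h(G_{c/})$ rather than a genuine initial one — and it is this weak form that the $h$-initial object condition is designed to capture, the passage from weak initiality to an honest adjoint being then supplied by the weak limit structure of $\D$ through the $n$-GAFT$_{\mathrm{fin}}$. Thus the technical heart of the argument is to match the weakly initial object produced by $\Phi_c$ with the exact formulation of the $h$-initial object condition of Definition \ref{solution_set_conditions}, after which both implications combine to give the stated equivalence for every $n \geq 1$ and for $n = \infty$.
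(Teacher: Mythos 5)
Your forward direction and your overall reduction to Theorem \ref{nGAFTfin} match the paper, and your comparison functor $\Phi_c \colon \h(G_{c/}) \to \h(G)_{c/}$ is exactly the paper's device. But there is a genuine gap at the final step: you assert that a lift along $\Phi_c$ of the initial object of $\h(G)_{c/}$ is weakly initial in $\h(G_{c/})$, and that ``it is this weak form that the $h$-initial object condition is designed to capture.'' That is a misreading of Definition \ref{solution_set_conditions}. By Definition \ref{def_initial_objs_types}(3), an $h$-initial object of $G_{c/}$ is weakly initial of order $1$, i.e.\ its mapping spaces in $G_{c/}$ must be nonempty \emph{and connected} --- equivalently, it must be genuinely \emph{initial} in $\h(G_{c/})$, not merely weakly initial there. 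Surjectivity on objects and fullness of $\Phi_c$ give you only non-emptiness of the relevant hom-sets of $\h(G_{c/})$; connectedness can fail for precisely the reason you yourself identify, namely the failure of faithfulness governed by the $\pi_1$-action of $\pi_1(\map_{\C}(c,G(d')),\eta')$ on the components of the homotopy fibers. So your argument, as it stands, does not verify the hypothesis of Theorem \ref{nGAFTfin}, and the theorem's conclusion does not follow from weak initiality alone (Example \ref{initial_example2} illustrates how far weak initiality is from initiality).

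The paper closes exactly this gap with two ingredients you omit. First, $\Phi_c$ is also \emph{conservative}, not just surjective on objects and full. Second, $\h(G_{c/})$ has the pair-equalizing property: by Corollary \ref{limits_slices_under_objects} the slice $G_{c/}$ inherits finite products and weak pullbacks of order $(n-2)$, hence weak equalizers of order $(n-2)$ (rewriting an equalizer diagram as a pullback against a diagonal), and by \cite[Proposition 3.21]{Ra} the functor $G_{c/} \to \h(G_{c/})$ sends these to weak equalizers of order $0$; so for any parallel pair $f,g$ in $\h(G_{c/})$ there is $u_{f,g}$ with $f \circ u_{f,g} = g \circ u_{f,g}$. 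With these two facts, \cite[Lemma 3.3.2]{NRS} shows that $\Phi_c$ preserves and \emph{detects} initial objects (the standard argument: given $f,g \colon x \rightrightarrows y$ with $x$ your lift, equalize by $u \colon x'' \to x$, choose $v \colon x \to x''$ by weak initiality, observe $\Phi_c(uv)$ is an endomorphism of an initial object hence an identity, conclude $uv$ is invertible by conservativity, whence $f = g$). This upgrades your lift to an honest initial object of $\h(G_{c/})$, i.e.\ an $h$-initial object of $G_{c/}$, after which Theorem \ref{nGAFTfin} applies as you intended. You should also note the easy case $n=1$ separately, as the paper does, since the argument above uses $n \geq 2$ to get weak equalizers of order $(n-2) \geq 0$.
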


Even though the $n$-GAFTs produce refinements of the GAFTs in \cite{NRS} for suitable $n$-categories which do not admit all small (co)limits, they still do not fully address the connection with
the AFTs that arise from Brown representability. Our second goal in this paper is to study a Brown representability context for higher (homotopy) 
categories and identify classes of (homotopy) $n$-categories which satisfy Brown representability -- this is done in \textbf{Section \ref{section:higher_Brown_rep}}. Brown representability for $n$-categories is defined in terms of the following $n$-categorical version of Brown's 
original conditions. We denote here by $\Sn$ the $n$-category of $(n-1)$-truncated objects in the $\infty$-category of (small) spaces $\S$.

\begin{definition*}[Definition \ref{Brown_rep_def}]
Let $\C$ be a locally small weakly cocomplete $n$-category, where $n \geq 1$ is an integer or $n=\infty$. We say that $\C$ \emph{satisfies Brown representability} if for any given functor $F \colon \C^{\op} \to \Sn$, the following holds: $F$ is representable  if (and only if) $F$ satisfies the conditions 
(B1)--(B2) below.  
\begin{enumerate}
\item[(B1).] For any small coproduct $\coprod_{i\in I} x_i$ in $\C$, the canonical morphism in $\Sn$
$$
F\left(\coprod_{i\in I} x_i\right)\longrightarrow \prod_{i\in I} F(x_i)
$$
is an equivalence.
\item[(B2).] For every weak pushout in $\C$ of order $(n-1)$
\begin{equation*} 
\xymatrix{
x \ar[r] \ar[d] & y \ar[d] \\ 
z \ar[r] & w}
\end{equation*}
the canonical morphism in $\Sn$
$$
F(w)\longrightarrow F(y)\times_{F(x)}F(z)
$$
is $(n-1)$-connected. 
\end{enumerate}
\end{definition*}

This definition restricts for $n=1$ to the familiar Brown representability context for ordinary categories (see, for example, \cite{He, NRS}). An analogous $2$-categorical Brown representability context was also considered recently in \cite{Ca}.  Let us emphasize again that this definition singles out a class of $n$-categories which is specific to each $n$. Similarly to classical Brown representability, the property that an $n$-category $\C$ satisfies Brown representability is closely related to an AFT for $\C$. This connection is explained in the following proposition. 

\begin{proposition*}[Proposition \ref{Brown_implies_adjoints}]
Let $\C$ and $\D$ be locally small $n$-categories, where $n \geq 1$ is an integer or $n=\infty$. Suppose that $\C$ is a weakly cocomplete $n$-category and satisfies Brown representability. Then a functor $F \colon \C \to \D$ admits a right adjoint if and only if $F$ satisfies the following properties:
\begin{itemize}
\item[(B1$'$).] $F$ preserves small coproducts. 
\item[(B2$'$).] $F$ preserves weak pushouts of order $(n-1)$. 
\end{itemize}
\end{proposition*}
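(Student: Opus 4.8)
The plan is to prove the two implications separately, with the Brown representability hypothesis carrying the content of the ``if'' direction. Throughout I write $\Map$ for the mapping spaces and use that a square is a weak pushout of order $(n-1)$ precisely when, for every test object, the comparison map from the mapping space out of its terminal vertex to the corresponding pullback of mapping spaces is $(n-1)$-connected.

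For the ``only if'' direction I would suppose $F\dashv R$, giving natural equivalences $\Map(F(c),d)\simeq\Map(c,R(d))$ in $\Sn$. To obtain (B1$'$) I would note that, for any small family $(x_i)_{i\in I}$ and any $d\in\D$, the adjunction identifies the canonical comparison $\Map(F(\coprod_i x_i),d)\to\prod_i\Map(F(x_i),d)$ with $\Map(\coprod_i x_i,R(d))\to\prod_i\Map(x_i,R(d))$, an equivalence by the universal property of the coproduct in $\C$; the Yoneda lemma then upgrades this to an equivalence $\coprod_i F(x_i)\to F(\coprod_i x_i)$. For (B2$'$), given a weak pushout of order $(n-1)$ in $\C$, the same adjunction identifies, for each $d\in\D$, the comparison map $\Map(F(w),d)\to\Map(F(y),d)\times_{\Map(F(x),d)}\Map(F(z),d)$ with the corresponding comparison map in $\C$ evaluated at $R(d)$, which is $(n-1)$-connected; hence the image square is a weak pushout of order $(n-1)$.

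For the ``if'' direction I would apply Proposition \ref{characterization_adjoint_functor} to $F^{\op}\colon\C^{\op}\to\D^{\op}$, which reduces the existence of a right adjoint to the representability, for each $d\in\D$, of the presheaf $F_d:=\Map(F(-),d)\colon\C^{\op}\to\Sn$ (this takes values in $\Sn$ because $\D$ is a locally small $n$-category). The idea is then to apply the Brown representability hypothesis for $\C$ to each $F_d$, so that the whole task reduces to checking (B1) and (B2). I would deduce (B1) from (B1$'$): the equivalence $F(\coprod_i x_i)\simeq\coprod_i F(x_i)$ and the universal property of the coproduct in $\D$ identify the canonical map $F_d(\coprod_i x_i)\to\prod_i F_d(x_i)$ with an equivalence. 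Condition (B2) is exactly (B2$'$): applying $F$ to a weak pushout square of order $(n-1)$ gives such a square in $\D$, and evaluating its defining connectivity property at $d$ shows that $F_d(w)\to F_d(y)\times_{F_d(x)}F_d(z)$ is $(n-1)$-connected. Brown representability then furnishes, for every $d$, an object $R(d)\in\C$ with $F_d\simeq\Map(-,R(d))$.

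Since these verifications are formal, the only genuinely non-trivial input is the Brown representability hypothesis, which does all the real work in the sufficiency direction. The step I expect to require the most care is the final assembly: Brown representability produces the objects $R(d)$ only pointwise, and these must be promoted to an honest right-adjoint functor with its counit. This is exactly what the cited adjoint functor characterization handles, the universal element $\id_{R(d)}$ supplying the counit $F(R(d))\to d$ that exhibits the requisite terminal object of the slice.
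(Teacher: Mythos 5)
Your proposal is correct and follows essentially the same route as the paper: the ``if'' direction reduces via Proposition \ref{characterization_adjoint_functor}(3$'$) to representability of each $\map_{\D}(F(-),d)$, whose conditions (B1)--(B2) are verified from (B1$'$)--(B2$'$) using the mapping-space characterization of weak pushouts (Proposition \ref{characterization_weak_colimit}(c)), and Brown representability finishes the argument. The only cosmetic difference is that for the ``only if'' direction you unfold, via naturality of the adjunction equivalences, exactly the argument that the paper packages as Proposition \ref{adjoint_preserve_weak_co_limits} and simply cites.
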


We introduce in \textbf{Section \ref{section:higher_Brown_rep}} a class of weakly cocomplete $n$-categories, called \emph{compactly generated $n$-categories}, and prove the following Brown representability theorem in the context of $n$-categories. 

\begin{gaftA} [Theorem \ref{higher_Brown}]
Let $\C$ be a compactly generated $n$-category, where $n \geq 1$ is an integer or $n=\infty$. Then $\C$ satisfies Brown representability.
\end{gaftA}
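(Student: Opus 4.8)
The plan is to adapt the classical cell-attachment argument for Brown representability to the $n$-categorical setting, constructing a representing object as a weak sequential colimit of objects built by repeatedly attaching coproducts of compact generators along weak pushouts of order $(n-1)$. Fix a functor $F \colon \C^{\op} \to \Sn$ satisfying (B1) and (B2), and let $\{G_\alpha\}$ denote a set of compact generators of $\C$. By the Yoneda lemma, giving a natural transformation $\Map(-, c) \to F$ amounts to giving an element $u \in F(c)$, and $F$ is representable precisely when some such $u$ induces an equivalence $\Map(x, c) \xrightarrow{\ \sim\ } F(x)$ for every $x$. Since the compact generators detect equivalences, it suffices to arrange that $u$ induces an equivalence $\Map(G_\alpha, c) \to F(G_\alpha)$ for each $\alpha$; and because both sides are $(n-1)$-truncated (for $n=\infty$, genuine spaces), it is enough to make this map $(n-1)$-connected.

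First I would construct, by induction on $k$, a tower $c_0 \to c_1 \to \cdots$ together with compatible elements $u_k \in F(c_k)$ so that the induced maps $\phi_k^\alpha \colon \Map(G_\alpha, c_k) \to F(G_\alpha)$ become successively more connected. For the base object I take $c_0 = \coprod_\alpha \coprod_{a \in \pi_0 F(G_\alpha)} G_\alpha$, a coproduct of generators indexed by the components of the $F(G_\alpha)$; condition (B1) identifies $F(c_0)$ with the corresponding product and thereby supplies the tautological element $u_0$ making each $\phi_0^\alpha$ surjective on $\pi_0$. At the inductive step, in order to correct the next layer of homotopy of $\phi_k^\alpha$, I attach further coproducts of compact generators to $c_k$ along weak pushouts of order $(n-1)$. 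Here condition (B2) is exactly what allows $u_k$ to be extended across such a weak pushout to an element $u_{k+1} \in F(c_{k+1})$: the $(n-1)$-connectivity of $F(w) \to F(y)\times_{F(x)} F(z)$ guarantees the existence of a compatible lift up to the relevant level, while the attachment kills the targeted homotopy and raises the connectivity of $\phi_{k+1}^\alpha$.

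Next I would pass to a weak sequential colimit $c = \colim_k c_k$, which exists in $\C$ because it is built from the small coproducts and weak pushouts of order $(n-1)$ available in a weakly cocomplete $n$-category, via the telescope construction. Compactness of each $G_\alpha$ then gives $\Map(G_\alpha, c) \simeq \colim_k \Map(G_\alpha, c_k)$, so the $\phi_k^\alpha$ assemble into a map $\phi^\alpha \colon \Map(G_\alpha, c) \to F(G_\alpha)$ which is $(n-1)$-connected, hence an equivalence of $(n-1)$-truncated spaces. To produce the element $u \in F(c)$ that induces the $\phi^\alpha$, I apply (B1) and (B2) to the telescope presentation of $c$: writing the sequential colimit as a weak pushout of coproducts lets (B1)--(B2) compute $F(c)$ from the $F(c_k)$ up to $(n-1)$-connectivity, and the compatible choices $u_k$ then lift to the desired $u$. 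Since the compact generators detect equivalences, the induced $\Map(-, c) \to F$ is an equivalence, so $F$ is representable.

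The main obstacle I anticipate is the obstruction-theoretic bookkeeping forced by the fact that (B2) yields only $(n-1)$-connectivity rather than an equivalence. One must track connectivity precisely through each cell attachment, checking that the targeted homotopy is removed without disturbing the connectivity already achieved at lower levels, and that the tower improves exactly up to level $n-1$, where truncation converts $(n-1)$-connectivity into an equivalence (for $n=\infty$ one instead arranges full connectivity in the colimit). This is the $n$-categorical analogue of the $\lim^1$-obstruction in the classical Milnor-sequence argument, and controlling it for a weak sequential colimit -- whose comparison maps are themselves only $(n-1)$-connected -- is where the care lies. The crux is to choose the elements $u_k$ coherently along the tower so that they genuinely lift to a single $u \in F(c)$, rather than merely up to the ambiguity inherent in a weak colimit.
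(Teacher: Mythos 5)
Your construction of the tower and its weak colimit matches the core of the paper's argument (Step 1 of their ``Criterion C,'' Lemma \ref{criterion_C}, which the paper applies to the slice $n$-category $F_{*/}$ rather than to $F$ directly), but the endgame of your proposal has a genuine gap. Your final inference -- ``since the compact generators detect equivalences, the induced $\Map(-,c) \to F$ is an equivalence'' -- is invalid: the weak-generator property (Definition \ref{weak_gen_def}) detects whether a \emph{morphism in $\C$} is an equivalence, i.e.\ it compares two \emph{representable} functors along a map of representing objects. Here $F$ is not yet known to be representable, so having $\Map(g,c) \simeq F(g)$ for all generators $g$ gives no license to conclude $\Map(x,c) \simeq F(x)$ for arbitrary $x \in \C$. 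This is precisely the hard step in every Brown representability proof, and the paper devotes Steps 2 and 3 of Criterion C to it: one must show (Step 1, run from an \emph{arbitrary} starting object, not just from a coproduct of generators) that every object admits a morphism to a ``$\mathcal{G}$-terminal'' object, then (Step 2) that any morphism between $\mathcal{G}$-terminal objects is an equivalence -- this is where weak generators legitimately enter, applied to an honest morphism $x \to w$ in the slice -- and finally (Step 3) that $\mathcal{G}$-terminal objects are genuinely terminal, by realizing a map $\partial\Delta^m \to \map_{\C}(c,x)$ as a diagram on a mapping-cylinder-type simplicial set $K$, coning it off with a weak colimit of order $(n-1)-\dim(K)$, mapping the cone object to a $\mathcal{G}$-terminal object, and using Step 2 to see the comparison is an equivalence. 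None of this machinery appears in your proposal, and without it the argument only establishes agreement of $\Map(-,c)$ and $F$ on generators.

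Two smaller points. First, your reduction ``both sides are $(n-1)$-truncated, so $(n-1)$-connectivity suffices'' is off by one: with the paper's conventions, an $(n-1)$-connected map between $(n-1)$-truncated spaces need not be an equivalence ($\pi_{n-1}$ is only forced to be surjective; for $n=1$ this is just a surjection of sets). One needs $n$-connectivity, which is why the paper attaches cells $g \otimes \partial\Delta^m \to g \otimes \Delta^m$ for all $0 \leq m \leq n$, using weak colimits of cells of order $(n-m)$. Second, your worry about choosing the $u_k$ coherently is legitimate, and the paper's device for resolving it is exactly to work in $F_{*/}$ from the start: a tower of pairs $(c_k, u_k)$ is then literally an $\N$-diagram, and the lift of the telescope to a distinguished cone in $(F_{*/})^{\op}$ is obtained from the $(n-1)$-connectivity (in particular $\pi_0$-surjectivity) of $F(t_\infty) \to \lim_{i} F(t_i)$, which follows from (B1)--(B2) applied to the telescope weak pushout. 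Reformulating your construction inside $F_{*/}$ would both discharge that coherence issue and put you in a position to run the missing Steps 2 and 3.
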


The notion of a compactly generated $n$-category contains some subtleties; we refer to Subsection \ref{subsec:compactly_generated} for the precise definition. Examples include the homotopy $n$-category of a finitely presentable $\infty$-category if $n \geq 2$, and the homotopy $n$-category of a finitely presentable stable $\infty$-category for any $n \geq 1$. 
The strategy for the proof of this Brown representability theorem for $n$-categories is comparable to Brown's original proof method, but various types of refinements of this method are required in order to apply to the $n$-categorical context; these make essential use of higher weak colimits in combination with the properties (B1)--(B2). The representability of a functor $F \colon \C^{\op} \to \Sn$ is equivalent to the existence of an initial object (or ``universal element'') $(* \to F(c))$ in $F_{*/}$. For the existence of an initial object in this context, we formulate a third general criterion for initial (or terminal) objects which applies specifically to $n$-categories with a set of compact objects that jointly detect equivalences -- see Lemma \ref{criterion_C} (Criterion C).  

Let us also remark that our previous results on Brown representability for $\infty$-categories in \cite{NRS} are essentially special cases of the theorem above for $n=1$. The purpose of the results in \cite[Section 5]{NRS} was indeed to identify classes of $\infty$-categories whose (ordinary) homotopy categories satisfy Brown representability. We refer to the relevant remarks in Section \ref{section:higher_Brown_rep} for more detailed explanations. 

\smallskip

Since the class of $n$-categories which satisfy Brown representability is closed under localizations (Proposition \ref{local_preserves_Brown}), we obtain the following result as a corollary.  

\begin{corollary*}[Corollaries \ref{higher_Brown_2} and \ref{higher_Brown_3}] Let $\C$ be a presentable $\infty$-category and let $\D$ be a locally small $n$-category, where $n \geq 1$ is an integer or $n = \infty$.
\begin{enumerate}
\item Suppose that $\C$ is stable. Then $\h_n\C$ satisfies Brown representability. As a consequence, a functor $F \colon \h_n \C \to \D$ admits a right adjoint if and only if $F$ preserves small coproducts and weak pushouts of order $(n-1)$.
\item Suppose that $n \geq 2$. Then $\h_n\C$ satisfies Brown representability.  As a consequence, a functor $F \colon \h_n \C \to \D$ admits a right adjoint if and only if $F$ preserves small coproducts and weak pushouts of order $(n-1)$.
\end{enumerate}
\end{corollary*}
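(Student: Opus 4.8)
The plan is to deduce both statements from the Brown representability theorem for compactly generated $n$-categories (Theorem \ref{higher_Brown}) together with the stability of Brown representability under localizations (Proposition \ref{local_preserves_Brown}): I would exhibit $\h_n\C$ as a localization of the homotopy $n$-category of a compactly generated $\infty$-category, and then transport Brown representability along this localization. To begin, recall that every presentable $\infty$-category $\C$ is an accessible reflective localization of a compactly generated (= finitely presentable) $\infty$-category; concretely one may take a presheaf $\infty$-category $\C' = \mathcal{P}(\C_0)$ on a small $\infty$-category $\C_0$ together with an adjunction $L \colon \C' \rightleftarrows \C \colon \iota$ in which $\iota$ is fully faithful. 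In case (1), where $\C$ is in addition stable, I would arrange $\C'$ to be both stable and compactly generated (for instance a presheaf $\infty$-category valued in spectra), so that $\C$ is an exact accessible reflective localization of a stable compactly generated $\infty$-category; see \cite{HA}.

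The key step is to descend this localization to homotopy $n$-categories, that is, to show that applying $\h_n$ to the adjunction $L \dashv \iota$ produces a reflective localization of $n$-categories $\h_n L \colon \h_n\C' \rightleftarrows \h_n\C \colon \h_n\iota$. Here $\h_n$ carries adjunctions to adjunctions, since it sends the unit and counit of $L \dashv \iota$ to a unit and counit of $\h_n L$ and $\h_n\iota$ satisfying the triangle identities (equivalently, $\h_n$ is compatible with the respective homotopy $2$-categorical structures); and $\h_n\iota$ remains fully faithful because the mapping spaces in $\h_n(-)$ are the $(n-1)$-truncations of the mapping spaces of the underlying $\infty$-category, and $(n-1)$-truncation carries the equivalences $\Map_{\C}(x,y) \simeq \Map_{\C'}(\iota x, \iota y)$ to equivalences. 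As the $\infty$-categorical localization is at a small set of morphisms, so is the induced one, and $\h_n\iota$ exhibits $\h_n\C$ as a localization of $\h_n\C'$ in the sense required by Proposition \ref{local_preserves_Brown}.

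To conclude, I would invoke the results of Subsection \ref{subsec:compactly_generated} to see that $\h_n\C'$ is a compactly generated $n$-category: in case (2) this holds for $n \geq 2$ since $\C'$ is compactly generated, while in case (1) it holds for every $n \geq 1$ because $\C'$ is moreover stable. Hence $\h_n\C'$ satisfies Brown representability by Theorem \ref{higher_Brown}, and therefore so does its localization $\h_n\C$ by Proposition \ref{local_preserves_Brown}. For the adjoint functor consequence, I would observe that $\h_n\C$ is locally small (as $\C$ is) and weakly cocomplete, being the homotopy $n$-category of the cocomplete $\infty$-category $\C$ (dually to the remark following Definition \ref{weakly_cocomplete_def}); the claim then follows by applying Proposition \ref{Brown_implies_adjoints} to $\h_n\C$.

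I expect the crux to be the descent step: verifying that $\h_n$ sends the reflective localization $L \dashv \iota$ to a localization of $n$-categories in the exact sense demanded by Proposition \ref{local_preserves_Brown} --- in particular that the unit and counit descend to a genuine adjunction after truncation and that $\h_n\iota$ stays fully faithful. A secondary technical point, needed to obtain the full range $n \geq 1$ in case (1), is the choice of a \emph{stable} compactly generated $\C'$ localizing onto $\C$; this is precisely where stability is used to remove the restriction $n \geq 2$ present in case (2).
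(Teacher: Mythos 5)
Your proposal is correct and takes essentially the same route as the paper: the paper's Theorem \ref{pres_structure_thm} packages your localization/descent step (every (stable) presentable $\infty$-category is a localization of a finitely presentable (stable) one, and this descends to homotopy $n$-categories), after which Brown representability follows from Theorem \ref{higher_Brown} via Examples \ref{pres_Brown} and \ref{stable_pres_Brown} together with Proposition \ref{local_preserves_Brown}, and the adjoint functor statement from Proposition \ref{Brown_implies_adjoints}. The descent step you flag as the crux is precisely the ``consequence'' recorded in Theorem \ref{pres_structure_thm}, and your verification of it ($(n-1)$-truncation of mapping spaces preserves full faithfulness of $\iota$, and $\h_n$ carries adjunctions to adjunctions) is the intended argument.
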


Note that this adjoint functor theorem for $n=\infty$ recovers the left adjoint functor theorem for presentable $\infty$-categories \cite[Corollary 5.5.2.9(1)]{HTT}, \cite[Section 4]{NRS}. Thus, we obtain a collection of Brown representability theorems for $\h_n\C$, $1 \leq n \leq \infty$, which bridges the gap between the classical Brown representability theorems (e.g., for suitable triangulated categories) and the left adjoint functor theorem for presentable $\infty$-categories.

\medskip

\noindent \textbf{Conventions and terminology.} As in \cite{NRS}, we work in a model $\mathbb{V}$ of $\mathrm{ZFC}$-set theory which contains an inaccessible cardinal. We use the associated Grothendieck universe $\mathbb U \in \mathbb{V}$ to distinguish between small and large sets. More specifically, a set is called \emph{small} if it belongs to $\mathbb U$. Our results do not depend on these set-theoretical assumptions in any essential way; these are used as a convenient and standard convention.  

A simplicial set is a functor $K \colon \Delta^{\op}\to \Set_{\mathbb V}$. A simplicial set $K \colon \Delta^{\op} \to \Set_{\mathbb V}$ is \emph{small} if $K_n \in \mathbb U$ for each $[n]\in \Delta^{\op}$. An $\infty$-category (= quasi-category) is \emph{essentially small} if it is (Joyal) equivalent to a small simplicial set. 
An $\infty$-category $\C$ is called \emph{locally small} if for every small set $S$ of objects in $\C$, the full subcategory of $\C$ spanned by $S$ is essentially small (see \cite[5.4.1]{HTT}). 

An $\infty$-category is (finitely) \emph{complete} (resp. \emph{cocomplete})  if it admits all limits (resp. colimits) indexed by small (finite) simplicial sets. For a simplicial set $K$, we will often use the notation $K^{\triangleright}$ for $K \ast \Delta^{0}$ and $K^{\triangleleft}$ for $\Delta^0 \ast K$.

\smallskip

For $n \geq 0$, a map $f \colon X \to Y$ between spaces (= Kan complexes) is called \emph{$n$-connected} if $\pi_0(f)$ is surjective and for every $x \in X$ the induced morphism $\pi_k(f, x)$ is an isomorphism if $k < n$ and an epimorphism if $k = n$ -- this is the classical convention. Every map is $(-1)$-connected. A space $X$ is $n$-connected if the map $(X \to *)$ is $(n+1)$-connected. More generally, a simplicial set $X$ is called \emph{$n$-connected} if it is weakly equivalent to an $n$-connected Kan complex. A space $X$ is \emph{$n$-truncated} if $\pi_k(X, x) = 0$ for all $x \in X$ and $k > n$. 

\smallskip

We will use the term $n$-category in the sense of \cite[2.3.4]{HTT}; this is a model for $(n,1)$-categories in the context of quasi-categories. For an $\infty$-category $\C$, we denote its homotopy $n$-category by $\h_n\C$. We will usually use the standard notation $\h(\C)$ to denote the usual homotopy category when $n=1$. $\S$ will denote the $\infty$-category of (small) spaces and $\Sn \subset \S$ the full subcategory of $(n-1)$-truncated spaces. $\Sn$ is equivalent to an $n$-category.

\medskip

\noindent \textbf{Acknowledgements.} Hoang Kim Nguyen and George Raptis gratefully acknowledge the support of the \emph{SFB 1085 -- Higher Invariants} (University of Regensburg) funded by the DFG. The authors thank the anonymous referee for their useful comments. 

\section{Higher weak (co)limits} \label{section:higher_weak_colimits}

\subsection{Recollections} Higher weak (co)limits are simultaneously a higher categorical generalization of ordinary weak (co)limits and a weakening of the notion of (co)limits in higher categories.  We review the definition of higher weak (co)limits and some of their basic properties from \cite[Section 3]{Ra}.  

First we recall that a space (= Kan complex) $X$ is $k$-\emph{connected}, for some $k \geq -1$, if it is non-empty and 
$\pi_i(X, x) \cong 0$ for every $x \in X$ and $i \leq k$. Every space $X$ is $(-2)$-connected. A space $X$ is $(-1)$-connected 
(resp. $0$-connected, $\infty$-connected) if it is non-empty (resp. connected, contractible). 

\begin{definition} Let $\C$ be an $\infty$-category and let $t \geq -1$ be an integer or $t = \infty$. 
\begin{itemize}
\item[(1)] An object $x \in \C$ is called \emph{weakly initial of order} $t$ if the mapping space $\map_{\C}(x, y)$ is $(t-1)$-connected for every object $y \in \C$. 
\item[(2)] An object $x \in \C$ is called \emph{weakly teminal of order} $t$ if the mapping space $\map_{\C}(y, x)$ is $(t-1)$-connected for every object $y \in \C$. 
\end{itemize}
\end{definition}

\begin{proposition} \label{Kan_complex_weakly_initial}
Let $\C$ be an $\infty$-category and let $t > 0$ be an integer or $t=\infty$. The full subcategory $\C'$ of $\C$ which is spanned by the weakly initial (resp. weakly terminal) objects of order $t$ is either empty or a $t$-connected $\infty$-groupoid. 

In particular, any two weakly initial (resp. weakly terminal) objects of order $t > 0$
are equivalent. (This fails in general for $t=-1, 0$.) 
\end{proposition}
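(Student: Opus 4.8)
The plan is to treat the nonempty case and to establish two facts about $\C'$: that it is an $\infty$-groupoid, and that as a Kan complex it is $t$-connected. I argue the weakly initial case; the weakly terminal case is dual. The running input is the hypothesis $t > 0$, i.e. $t - 1 \geq 0$, which forces every mapping space $\map_\C(a,b)$ with $a \in \C'$ to be at least $0$-connected, hence path-connected. (For $t = \infty$ one reads ``$(t-1)$-connected'' as ``contractible'' and all arguments go through unchanged.)

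To see that $\C'$ is an $\infty$-groupoid, I show every morphism $f \colon x \to y$ between weakly initial objects of order $t$ is an equivalence. Since $y$ is weakly initial, $\map_\C(y,x)$ is $(t-1)$-connected, hence nonempty, so I may choose $g \colon y \to x$. Both $g \circ f$ and $\id_x$ are points of $\map_\C(x,x)$, which is path-connected, so $g \circ f \simeq \id_x$; symmetrically $f \circ g \simeq \id_y$ in the path-connected space $\map_\C(y,y)$. Thus $f$ becomes an isomorphism in $\h\C$ with inverse $g$, and therefore $f$ is an equivalence. Consequently $\C'$ is an $\infty$-groupoid, i.e. a Kan complex.

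Knowing $\C'$ is a Kan complex, I compute its homotopy groups. It is path-connected: for $x, y \in \C'$ the space $\map_\C(x,y)$ is nonempty, and any morphism $x \to y$ is a $1$-simplex of $\C'$, i.e. a path; hence $\pi_0(\C') = \ast$. For $i \geq 1$ I invoke the standard description of mapping spaces in a Kan complex: $\map_{\C'}(x,x)$ is equivalent to the fiber over $(x,x)$ of the endpoint-evaluation fibration $(\C')^{\Delta^1} \to \C' \times \C'$, namely the based loop space $\Omega_x \C'$. Since $\C'$ is full, $\map_{\C'}(x,x) = \map_\C(x,x)$, whence $\pi_i(\C', x) \cong \pi_{i-1}(\Omega_x \C') = \pi_{i-1}\big(\map_\C(x,x)\big)$. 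As $x$ is weakly initial of order $t$, the space $\map_\C(x,x)$ is $(t-1)$-connected, so $\pi_{i-1}(\map_\C(x,x)) = 0$ for all $i$ with $1 \leq i \leq t$. Together with $\pi_0(\C') = \ast$ this gives $\pi_i(\C',x) = 0$ for all $i \leq t$ at every basepoint, so $\C'$ is $t$-connected.

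The final assertion is then immediate, since path-connectedness of $\C'$ means any two weakly initial (resp. terminal) objects of order $t > 0$ lie in one component of $\C'$ and are therefore equivalent. The one step needing care is the connectivity computation above: correctly setting up the equivalence $\map_{\C'}(x,x) \simeq \Omega_x \C'$ together with the degree shift $\pi_i(\C') \cong \pi_{i-1}(\map_\C(x,x))$, and checking that the mapping space computed in the full subcategory $\C'$ coincides with the one computed in $\C$. Everything else rests on the single elementary observation that $t > 0$ makes all relevant mapping spaces path-connected.
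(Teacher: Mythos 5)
Your proof is correct. A caveat on the comparison: the paper does not actually prove this proposition in the text --- its ``proof'' is a citation to \cite[Proposition 3.4, Remarks 3.5 and 3.8]{Ra} --- so your argument supplies a self-contained proof where the paper defers to a reference, and it is the natural one. Both halves check out against the paper's conventions. For the groupoid step: $t>0$ makes every relevant mapping space $(t-1)$-connected with $t-1\geq 0$, hence non-empty and path-connected, so your two-sided homotopy inverse argument shows $[f]$ is invertible in $\h\C$; since $\C'$ is full, the inverse and the witnessing edges live in $\C'$, and Joyal's characterization (a quasi-category is a Kan complex if and only if every edge is an equivalence, equivalently its homotopy category is a groupoid) gives that $\C'$ is an $\infty$-groupoid --- you use this characterization implicitly, and it is worth naming. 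For the connectivity step: the model $\map_{\C'}(x,x)\simeq \mathrm{fib}_{(x,x)}\bigl((\C')^{\Delta^1}\to \C'\times\C'\bigr)$ has the correct homotopy type by \cite[Corollary 4.2.1.8]{HTT} (the same fact the paper invokes in the proof of Criterion C), and for a Kan complex this fiber is literally $\Omega_x\C'$, so the degree shift $\pi_i(\C',x)\cong\pi_{i-1}\bigl(\map_{\C}(x,x)\bigr)=0$ for $1\leq i\leq t$ is right; together with your $\pi_0(\C')=\ast$ computation (valid since a point of $\map_{\C}(x,y)$ gives an edge of $\C$ between vertices of $\C'$, which lies in $\C'$ by fullness) this matches the paper's definition of $t$-connected (non-empty, $\pi_i=0$ for all $i\leq t$ and all basepoints), and your reading of the $t=\infty$ case agrees with the paper's convention that $\infty$-connected means contractible. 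The final assertion then follows from path-connectedness exactly as you say. No gaps.
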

\begin{proof}
See \cite[Proposition 3.4, Remarks 3.5 and 3.8]{Ra}.
\end{proof}

\begin{lemma} \label{characterization_n-connected_left/right}
Let $p\colon \C\to \D$ be a left or right fibration and let $t \geq -1$ be an integer or $t=\infty$. Then $p$ has $(t-1)$-connected fibers if and only if for every $0\leq k\leq t$, every lifting problem of the form
\[
\begin{tikzcd}[column sep=small]
\partial \Delta^k  \arrow[r] \arrow[d,hook] & \C \arrow[d, "p"] \\
\Delta^k   \arrow[ur,dashed]  \arrow[r] & \D \\
\end{tikzcd}
\]
admits a diagonal filler.
\end{lemma}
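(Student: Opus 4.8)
The plan is to read the displayed lifting condition as the assertion that $p$ has the right lifting property against the boundary inclusions $\partial\Delta^k\hookrightarrow\Delta^k$ for all $0\le k\le t$, and to prove the two implications separately. Since $p^{\op}\colon\C^{\op}\to\D^{\op}$ is a left fibration precisely when $p$ is a right fibration, and since passing to opposites preserves the homotopy type of the fibers (which are Kan complexes) and carries $\partial\Delta^k\hookrightarrow\Delta^k$ to itself, I would assume throughout that $p$ is a left fibration and deduce the right fibration case by duality.

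For the implication ``lifting property $\Rightarrow$ $(t-1)$-connected fibers'', I would specialize to lifting problems whose lower horizontal map $\Delta^k\to\D$ is constant at a vertex $d\in\D$. For such a problem both the upper map $\partial\Delta^k\to\C$ and any diagonal filler factor through the fiber $\C_d=p^{-1}(d)$, so the hypothesis says exactly that every map $\partial\Delta^k\to\C_d$ extends along $\partial\Delta^k\hookrightarrow\Delta^k$ for $0\le k\le t$. As $\C_d$ is a Kan complex, this is the standard characterization of $\C_d$ being $(t-1)$-connected: the case $k=0$ is non-emptiness, and for $k\ge 1$ the obstruction to filling $\partial\Delta^k\to\C_d$ is the class it represents in $\pi_{k-1}(\C_d)$. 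Letting $d$ range over all vertices shows every fiber is $(t-1)$-connected.

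For the converse I would first reduce to the universal case: given a lifting problem with lower map $\sigma\colon\Delta^k\to\D$ and $k\le t$, pulling $p$ back along $\sigma$ yields a left fibration $\C_\sigma\to\Delta^k$ whose fibers lie among those of $p$, hence are $(t-1)$-connected, and turns the problem into extending a map $g\colon\partial\Delta^k\to\C_\sigma$ to a section of $\C_\sigma\to\Delta^k$ (the case $k=0$ being mere non-emptiness). For $k\ge 1$ I would build the filler in two moves. First, as $\Lambda^k_0\hookrightarrow\Delta^k$ is left anodyne, $g|_{\Lambda^k_0}$ extends over $\Delta^k$ to a section $h$ agreeing with $g$ on all of $\partial\Delta^k$ except possibly on the last face $\partial_0=\Delta^{\{1,\dots,k\}}$. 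Second, I would correct $h$ along $\partial_0$: the two sections $g|_{\partial_0}$ and $h|_{\partial_0}$ of the restricted left fibration $\C_\sigma|_{\partial_0}\to\Delta^{k-1}$ agree on $\partial\Delta^{k-1}$, and if they are homotopic rel $\partial\Delta^{k-1}$ through sections, then gluing such a homotopy to $h$ along the \emph{left anodyne} inclusion $(\Delta^k\times\{0\})\cup(\partial\Delta^k\times\Delta^1)\hookrightarrow\Delta^k\times\Delta^1$ (the pushout--product of $\partial\Delta^k\hookrightarrow\Delta^k$ with $\{0\}\hookrightarrow\Delta^1$) and restricting to the end $\Delta^k\times\{1\}$ produces a section restricting to $g$ on the full boundary.

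The crux, and the step I expect to be the main obstacle, is the existence of this rel-boundary homotopy of sections, since a naive induction on $k$ that fills the correcting homotopy cell-by-cell turns out to be circular. I would instead argue it through a direct connectivity estimate for section spaces. The key input is that for any left fibration $E\to\Delta^j$, evaluation at the initial vertex $\mathrm{ev}_0\colon\Map_{\Delta^j}(\Delta^j,E)\to E_{\{0\}}$ is a trivial Kan fibration, because $\{0\}\hookrightarrow\Delta^j$ is left anodyne; hence the total section space is equivalent to its $(t-1)$-connected initial fiber. Feeding this into the fibration sequence relating sections over $\Delta^{k-1}$, sections over $\partial\Delta^{k-1}$, and sections rel $\partial\Delta^{k-1}$, and inducting on the cells of $\partial\Delta^{k-1}$ via the long exact sequence in homotopy, I would show that the space of sections of $\C_\sigma|_{\partial_0}\to\Delta^{k-1}$ extending the common boundary value is $(t-k)$-connected, hence in particular connected. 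Connectedness places $g|_{\partial_0}$ and $h|_{\partial_0}$ in the same path component, which furnishes the required homotopy and completes the construction of the filler; here the hypothesis $k\le t$ is used precisely to force the relevant obstruction, lying in $\pi_{k-1}$ of a fiber, to vanish.
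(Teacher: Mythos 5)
Your argument is correct. Note that the paper itself offers no written proof of this lemma: it simply cites \cite[Lemma 2.1.3.4]{HTT} (the case $t=\infty$, where ``$(t-1)$-connected'' means contractible and the lifting property holds in all dimensions) and asserts that the same proof generalizes. What you have produced is, in effect, that unwritten generalization, carried out with the standard toolkit of HTT \S 2.1.2: reduction by pullback to the base $\Delta^k$, filling of the left-anodyne horn $\Lambda^k_0 \subseteq \Delta^k$, correction along the face opposite the initial vertex (your ``last face'' $\Delta^{\{1,\dots,k\}}$ is the $0$-th face, a harmless slip in wording), and the gluing of the correcting homotopy along the left-anodyne pushout--product $(\Delta^k\times\{0\})\cup(\partial\Delta^k\times\Delta^1)\hookrightarrow\Delta^k\times\Delta^1$. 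Your identification of the crux is also the right one: for finite $t$ the naive cell-by-cell filling of the correcting homotopy involves cells of the same dimension $k$ and is indeed circular, whereas your connectivity estimate for section spaces is not, since it only computes homotopy groups via the long exact sequence, with absolute section spaces over simplices handled by the trivial fibration $\mathrm{ev}_0$ (restriction along the left-anodyne $\{0\}\subseteq\Delta^j$) and those over boundaries by induction on dimension; the resulting count -- total space $(t-1)$-connected, base $(t-k+1)$-connected over the $(k-2)$-dimensional $\partial\Delta^{k-1}$, hence fiber $(t-k)$-connected -- is exactly right, and $k\le t$ enters precisely where you say it does. Two details should be made explicit in a full writeup: first, that the restriction maps between fiberwise mapping spaces are Kan fibrations between Kan complexes (they are left fibrations by the pushout--product property, their targets are Kan as fibers of left fibrations, and a left fibration over a Kan complex is a Kan fibration, \cite[Lemma 2.1.3.3]{HTT}), so the long exact sequences you invoke are available; second, that the nonemptiness half of ``$(t-k)$-connected'' for the fiber uses path-connectedness of the base section space (here $t-k+1\ge 1$) together with path lifting. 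Neither is a gap, and your quantitative bookkeeping has the advantage of documenting exactly why the dimension range $0\le k\le t$ matches $(t-1)$-connectivity of the fibers, which the $t=\infty$ argument never needs to track.
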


\begin{proof}
This generalizes \cite[Lemma 2.1.3.4]{HTT} which treated the case $t = \infty$. The same proof as in \cite[Lemma 2.1.3.4]{HTT} applies to the more general case. 
\end{proof}

\begin{proposition} \label{characterization_weakly_initial/terminal}
Let $\C$ be an $\infty$-category, $x \in \C$ an object, and let $t \geq -1$ be an integer or $t = \infty$. We denote by $p \colon \C_{x/} \to \C$ (resp. $q \colon \C_{/x} \to \C$) the associated left (resp. right) fibration. Then the following hold:
\begin{enumerate}
\item $x \in \C$ is weakly initial of order $t$ if and only if every lifting problem, where $0 \leq k \leq t$, 
\[
\begin{tikzcd}[column sep=small]
\partial \Delta^k  \arrow[r] \arrow[d,hook] & \C_{x/} \arrow[d, "p"] \\
\Delta^k   \arrow[ur,dashed]  \arrow[r] & \C \\
\end{tikzcd}
\]
admits a diagonal filler. 
\item $x \in \C$ is weakly terminal of order $t$ if and only if every lifting problem, where $0 \leq k \leq t$, 
\[
\begin{tikzcd}[column sep=small]
\partial \Delta^k  \arrow[r] \arrow[d,hook] & \C_{/x} \arrow[d, "q"] \\
\Delta^k   \arrow[ur,dashed]  \arrow[r] & \C \\
\end{tikzcd}
\]
admits a diagonal filler. 
\end{enumerate} 
\end{proposition}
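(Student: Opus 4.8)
The plan is to deduce both statements directly from Lemma \ref{characterization_n-connected_left/right}, once the fibers of the slice projections have been identified with mapping spaces. I would treat part (1) in detail, since part (2) is entirely dual.

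First I would recall that the left fibration $p \colon \C_{x/} \to \C$ has, over each object $y \in \C$, a fiber which is a model for the mapping space $\map_{\C}(x,y)$ (see \cite[Sections 2.1 and 4.2.1]{HTT}). Since the fibers of a left fibration over objects lying in the same connected component of $\C$ are equivalent, and since higher simplices of $\C$ contribute no new connectivity information, the connectivity of the fibers of $p$ is detected on the fibers over the objects (vertices) of $\C$. Therefore $p$ has $(t-1)$-connected fibers if and only if $\map_{\C}(x,y)$ is $(t-1)$-connected for every $y \in \C$, which is precisely the condition that $x$ is weakly initial of order $t$. Applying Lemma \ref{characterization_n-connected_left/right} to $p$ then identifies this with the existence of diagonal fillers for every lifting problem of the displayed form with $0 \leq k \leq t$, which yields part (1).

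For part (2) I would run the same argument with the right fibration $q \colon \C_{/x} \to \C$, whose fiber over $y \in \C$ is a model for $\map_{\C}(y,x)$. Lemma \ref{characterization_n-connected_left/right} is stated for both left and right fibrations, and $(t-1)$-connectedness of all these fibers is exactly the condition that $x$ is weakly terminal of order $t$; hence the same reasoning applies verbatim. The only points requiring care are the identification of the fibers of the slice projections with the respective mapping spaces and the remark that the connectivity of the fibers of a left (resp. right) fibration is determined on vertices; both are standard, so I do not expect a genuine obstacle here — the proposition is essentially a repackaging of Lemma \ref{characterization_n-connected_left/right} together with the standard description of mapping spaces as fibers of slice fibrations.
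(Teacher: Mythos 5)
Your proposal is correct and follows essentially the same route as the paper, whose proof is simply the remark that the statement follows directly from Lemma \ref{characterization_n-connected_left/right}; you merely make explicit the standard identification of the fibers of $p \colon \C_{x/} \to \C$ (resp.\ $q \colon \C_{/x} \to \C$) over $y$ with $\map_{\C}(x,y)$ (resp.\ $\map_{\C}(y,x)$), which is exactly the implicit content of the paper's one-line proof. (Note that ``fibers'' of a left or right fibration already means fibers over vertices, so your aside about detecting connectivity on vertices is unnecessary, though harmless.)
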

\begin{proof}
This follows directly from Lemma \ref{characterization_n-connected_left/right}.
\end{proof}

\begin{definition}
Let $\C$ be an $\infty$-category, $K$ a simplicial set, and let $f_0 \colon K \to \C$ be a $K$-diagram in $\C$. 
\begin{itemize}
\item[(1)] A weakly initial object $f \in \C_{f_0/}$ of order $t$ is called a \emph{weak colimit of $f_0$ of order} $t$.   
\item[(2)] A weakly terminal object $f \in \C_{/f_0}$ of order $t$ is called a \emph{weak limit of $f_0$ of order} $t$.   
\end{itemize}
\end{definition}

The following proposition gives several equivalent characterizations of higher weak (co)limits. 

\begin{proposition} \label{characterization_weak_colimit}
Let $\C$ be an $\infty$-category and let $t \geq -1$ be an integer or $t = \infty$. Let $K$ be a simplicial set and let $f \colon K^{\triangleright} \to \C$ be a cone on $f_0 = f_{|K} \colon K \to \C$ with cone object $x \in \C$. We denote by $p \colon \C_{f/} \to \C_{f_0/}$ the associated left fibration. Then the following are equivalent: 
\begin{enumerate} 
\item[(a)] $f$ is a weak colimit of $f_0$ of order $t$.
\item[(b)] The fibers of $p \colon \C_{f/} \to \C_{f_0/}$ are $(t-1)$-connected. 
\item[(c)]  For every object $y \in \C$, the canonical restriction map
$$\map_{\C}(x, y) \simeq \map_{\C^{K^{\triangleright}}}(f, c_y) \to \map_{\C^K}(f_0, c_y)$$
is $t$-connected. ($c_y$ denotes respectively the constant diagram at $y \in \C$.)
\item[(d)] For $0 \leq k \leq t$, every lifting problem
\[
\begin{tikzcd}[column sep=small]
\partial \Delta^k  \arrow[r] \arrow[d,hook] & \C_{f/} \arrow[d, "p"] \\
\Delta^k   \arrow[ur,dashed]  \arrow[r] & \C_{f_0/} \\
\end{tikzcd}
\]
admits a diagonal filler.  
\end{enumerate}
An analogous statement holds also for weak limits: Suppose that $f \colon K^{\triangleleft} \to \C$ is a cone on 
$f_0 = f_{|K} \colon K \to \C$ with cone object $x \in \C$ and let $q \colon \C_{/f} \to \C_{/f_0}$ denote the associated right fibration. 
Then the following are equivalent: 
\begin{enumerate} 
\item[(a)]  $f$ is a weak limit of $f_0$ of order $t$.
\item[(b)] The fibers of $q \colon \C_{/f} \to \C_{/f_0}$ are $(t-1)$-connected. 
\item[(c)]  For every object $y \in \C$, the canonical restriction map
$$\map_{\C}(y, x) \simeq \map_{\C^{K^{\triangleleft}}}(c_y, f) \to \map_{\C^K}(c_y, f_0)$$
is $t$-connected. ($c_y$ denotes respectively the constant diagram at $y \in \C$.)
\item[(d)] For $0 \leq k \leq t$, every lifting problem
\[
\begin{tikzcd}[column sep=small]
\partial \Delta^k  \arrow[r] \arrow[d,hook] & \C_{/f} \arrow[d, "q"] \\
\Delta^k   \arrow[ur,dashed]  \arrow[r] & \C_{/f_0} \\
\end{tikzcd}
\]
admits a diagonal filler.
\end{enumerate}
\end{proposition}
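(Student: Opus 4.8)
The plan is to treat condition (d) as the hub: I will show (a) $\Leftrightarrow$ (d) and (b) $\Leftrightarrow$ (d) almost immediately from the results already established, and then do the real work in proving (a) $\Leftrightarrow$ (c) by means of a fiber sequence of mapping spaces. Throughout I write $\alpha \colon f_0 \to c_x$ for the cocone structure encoded by $f$; the weak-limit half of the statement is obtained throughout by passing to opposite categories, replacing the left fibration $\C_{f/} \to \C_{f_0/}$ by the right fibration $\C_{/f} \to \C_{/f_0}$.

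First, (a) $\Leftrightarrow$ (d). By definition, $f$ is a weak colimit of $f_0$ of order $t$ precisely when $f$ is a weakly initial object of order $t$ in the $\infty$-category $\C_{f_0/}$. Applying Proposition \ref{characterization_weakly_initial/terminal}(1) to the object $f \in \C_{f_0/}$, this holds iff every lifting problem with $0 \le k \le t$ against the left fibration $(\C_{f_0/})_{f/} \to \C_{f_0/}$ admits a filler; invoking the standard transitivity identification $(\C_{f_0/})_{f/} \simeq \C_{f/}$ of left fibrations over $\C_{f_0/}$, this is exactly condition (d). Next, (b) $\Leftrightarrow$ (d) is a direct application of Lemma \ref{characterization_n-connected_left/right} to the left fibration $p \colon \C_{f/} \to \C_{f_0/}$: its fibers are $(t-1)$-connected iff every such lifting problem against $p$ has a filler. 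Thus (a), (b), and (d) are already equivalent.

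It remains to bring in (c). The key input is that the slice projection $q \colon \C_{f_0/} \to \C$ (evaluation at the cone point) is a left fibration with fiber over $y \in \C$ the space $\map_{\C^K}(f_0, c_y)$ of cocone structures on $y$, and that for objects $f, g \in \C_{f_0/}$ with cone objects $x, y$ the associated mapping-space fiber sequence for a left fibration takes the form
\[
\map_{\C_{f_0/}}(f, g) \longrightarrow \map_{\C}(x, y) \xrightarrow{\ r_y\ } \map_{\C^K}(f_0, c_y),
\]
where the fiber is taken over the point determined by the cocone structure of $g$ and $r_y$ sends $\phi \colon x \to y$ to the pushforward cocone $c_\phi \circ \alpha$. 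Under the identification $\map_{\C}(x,y) \simeq \map_{\C^{K^{\triangleright}}}(f, c_y)$ — valid because the cone point is terminal in $K^{\triangleright}$, so $c_y$ is the right Kan extension of $y$ and maps out of $f$ are detected at the cone point — this $r_y$ is precisely the restriction map of (c).

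Granting this, (a) $\Leftrightarrow$ (c) is pure connectivity bookkeeping. Condition (a) asserts that $\map_{\C_{f_0/}}(f, g)$ is $(t-1)$-connected for every $g \in \C_{f_0/}$; as $g$ ranges over all cocones, its cone object $y$ ranges over all objects of $\C$ and its structure ranges over all points of $\map_{\C^K}(f_0, c_y)$, so by the fiber sequence (a) holds iff every homotopy fiber of every $r_y$ is $(t-1)$-connected. By the standard translation (a map of spaces is $t$-connected iff all of its homotopy fibers are $(t-1)$-connected, the $\pi_0$-surjectivity built into $t$-connectedness corresponding exactly to non-emptiness of these fibers, i.e. to $\map_{\C_{f_0/}}(f,g) \ne \emptyset$), this is equivalent to each $r_y$ being $t$-connected, which is (c); the edge cases $t = -1, 0$ are consistent with the stated conventions. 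The main obstacle is assembling the fiber sequence above — that $q$ is a left fibration with the asserted fibers, that the induced map is the restriction map of (c), and that the homotopy fiber computes $\map_{\C_{f_0/}}(f,g)$. These are standard facts about under-categories and left fibrations, but they require care to combine correctly; once in place, everything else reduces to Proposition \ref{characterization_weakly_initial/terminal}, Lemma \ref{characterization_n-connected_left/right}, and the elementary fiber/connectivity dictionary. The weak-limit statement follows dually, using the right fibration $\C_{/f_0} \to \C$ and the corresponding fiber sequence $\map_{\C_{/f_0}}(g,f) \to \map_{\C}(y,x) \to \map_{\C^K}(c_y, f_0)$.
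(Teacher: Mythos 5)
Your proposal is correct, and it proves strictly more than the paper writes down: the paper's proof is essentially a three-line dispatch -- (a) $\Leftrightarrow$ (b) is declared obvious (the fibers of $p \colon \C_{f/} \to \C_{f_0/}$ are models for the mapping spaces $\map_{\C_{f_0/}}(f,g)$, so (b) restates the definition of $f$ being weakly initial of order $t$ in $\C_{f_0/}$), (b) $\Leftrightarrow$ (d) is deduced from Lemma \ref{characterization_n-connected_left/right} exactly as you do, and (a) $\Leftrightarrow$ (c) is outsourced entirely to a citation of \cite[Proposition 3.9]{Ra}. You instead make (d) the hub, obtaining (a) $\Leftrightarrow$ (d) from Proposition \ref{characterization_weakly_initial/terminal} together with the join-associativity isomorphism $(\C_{f_0/})_{f/} \cong \C_{f/}$ over $\C_{f_0/}$ (which is the content behind the paper's ``cf.'' remark), and you supply a self-contained argument for (a) $\Leftrightarrow$ (c) via the fiber sequence $\map_{\C_{f_0/}}(f,g) \to \map_{\C}(x,y) \to \map_{\C^K}(f_0,c_y)$ attached to the left fibration $\C_{f_0/} \to \C$ -- in effect reconstructing the cited result of \cite{Ra}. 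All the ingredients check out: $c_y$ is indeed the right Kan extension of $y$ along the (terminal) cone point, so evaluation at the cone point identifies $\map_{\C^{K^{\triangleright}}}(f,c_y)$ with $\map_{\C}(x,y)$ and carries the covariant-transport map $\phi \mapsto c_\phi \circ \alpha$ to the restriction map of (c); the connectivity dictionary (a map is $t$-connected iff all its homotopy fibers are $(t-1)$-connected, with $\pi_0$-surjectivity matching non-emptiness of fibers) handles the edge cases $t=-1,0$ correctly; and since connectivity of a homotopy fiber depends only on the path component of the basepoint, letting $g$ range over vertices of $\map_{\C^K}(f_0,c_y)$ does cover all basepoints. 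The one step you use without proof is the mapping-space fiber sequence for a left fibration itself; this is a standard fact (obtainable from straightening, or from the slice comparison results in \cite{HTT} or \cite{Ci}), and flagging it as such puts your write-up at a level of rigor at least matching the paper's, which leaves the same content inside the citation \cite[Proposition 3.9]{Ra}. What your route buys is self-containedness; what the paper's buys is brevity and a clean separation of the homotopy-theoretic input into a reference.
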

\begin{proof}
(a) $\Leftrightarrow$ (b) is obvious. For (a) $\Leftrightarrow$ (c), see \cite[Proposition 3.9]{Ra}. (b) $\Leftrightarrow$ (d) is an easy consequence of Lemma \ref{characterization_n-connected_left/right} (cf. Proposition \ref{characterization_weakly_initial/terminal}).  
\end{proof} 

\begin{example} \label{weakly-colimits--1}
Every object $x \in \C$ is weakly initial (resp. weakly terminal) of order $(-1)$. More generally, any cone $f \colon K^{\triangleleft} \to \C$ (resp. $f \colon K^{\triangleright} \to \C$) on a $K$-diagram $f_0 = f_{|K} \colon K \to \C$ is a weak limit (resp. weak colimit) of $f_0$ of order $(-1)$. 
\end{example}

\begin{example} \label{weak-colimits-0}
Suppose that $\C$ is (the nerve of) an ordinary category. An object $x \in \C$ is weakly initial (resp. weakly terminal) of order $0$ if and only if $x$ is weakly initial (resp. weakly terminal) in the usual sense. Similarly, a weak (co)limit in $\C$ of order $0$ is exactly a weak (co)limit in the usual sense. 
\end{example}

\begin{example} \label{weak-colimits-infinity}
An object $x \in \C$ is weakly initial (resp. weakly terminal) of order $\infty$ if and only if $x$ is initial (resp. terminal). A cone $f \colon K^{\triangleleft} \to \C$ (resp. $f \colon K^{\triangleright} \to \C$) on a $K$-diagram $f_0 = f_{|K} \colon K \to \C$ is a weak limit (resp. weak colimit) of $f_0$ of order $\infty$ if and only if $f$ is a limit (resp. colimit) of $f_0$.   
\end{example}

\begin{remark} \label{weak-colimits-n-cat-remark}
The last example has the following useful variation; for simplicity, we formulate this only for weak colimits. Suppose that $\C$ is an $n$-category (see \cite[2.3.4]{HTT}). If $f \colon K^{\triangleright} \to \C$ is a weak colimit of $f_0 = f_{|K} \colon K \to \C$ of order $t \geq n$, then $f$ is a colimit diagram. This is because $\C_{f_0/}$ is an $n$-category (see \cite[Corollary 2.3.4.10]{HTT}) and the mapping spaces in an $n$-category are $(n-1)$-truncated.  
\end{remark}

\subsection{Weakly (co)complete $n$-categories} \label{weakly_cocomplete_n-categories} Weakly (co)complete $n$-categories determine a class of $n$-categories which lies between weakly (co)complete (ordinary) categories and (co)complete $\infty$-categories. 

\begin{definition} \label{weakly_cocomplete_def} 
Let $\C$ be an $\infty$-category and let $n \geq 1$ be an integer or $n=\infty$. 
\begin{enumerate}
\item $\C$ is a \emph{(finitely) weakly cocomplete $n$-category} if $\C$ is an $n$-category which admits small (finite) coproducts and weak pushouts of order $(n-1)$. In this case, we also say that $\C$ is (finitely) weakly $n$-cocomplete. 
\item $\C$ is a \emph{(finitely) weakly complete $n$-category} if $\C$ is an $n$-category which  admits small (finite) products and weak pullbacks of order $(n-1)$. In this case, we also say that $\C$ is (finitely) weakly $n$-complete. 
\end{enumerate}
\end{definition}

Note that a (finitely) weakly cocomplete $\infty$-category is (finitely) cocomplete and every (finitely) cocomplete $n$-category is also (finitely) weakly $n$-cocomplete. 

While it is convenient to state the definition in terms of (co)products and pushouts/pullbacks, weakly (co)complete $n$-categories admit also further weak (co)limits of variable orders. The following proposition explains more generally what other higher weak (co)limits can be deduced from the existence of (co)products and higher weak pullbacks/pushouts. 

\begin{proposition} \label{weakly_cocomplete_colimits}
Let $\C$ be an $\infty$-category which admits small (finite) products (resp. coproducts) and weak pullbacks (resp. weak pushouts) of order $t$. Then for every small (finite) simplicial set $K$ of dimension $d \leq t + 2$,  $\C$ admits weak $K$-limits (resp. weak $K$-colimits) of order $(t-d+1)$. 
\end{proposition}
\begin{proof}
This is shown by induction on the dimension $d$ of the simplicial set $K$ using \cite[Proposition 3.10]{Ra}. We sketch the details for completeness. First, the claim is obvious when $K$ is empty and for $d = 0$ (and any $t$). Assume by induction that the claim holds when the dimension is less than $d$ and let $K \to \C$ be a $K$-diagram indexed by a $d$-dimensional small (finite) simplicial set $K$, where $0 < d \leq t+2$. We have a pushout diagram 
$$
\xymatrix{
\bigsqcup_I \partial \Delta^d \ar[r] \ar[d] & \sk_{d-1}(K) \ar[d] \\
\bigsqcup_I \Delta^d \ar[r] & K
}
$$
where $I$ is the (small/finite) index set of the non-degenerate $d$-simplices of $K$. By the inductive assumption, the respective restrictions of $K \to \C$ to $\sk_{d-1}(K)$ and $\bigsqcup_I \partial \Delta^d$ admit weak limits of order $(t-(d-1)+1) \geq 0$. Moreover, the composite diagram $\bigsqcup_I \Delta^d \to K \to \C$ admits a limit cone; this is given by the product of the limits coming from each $I$-component $\Delta^d \to \C$, which exist because $\Delta^d$ has an initial object. Then the result follows from \cite[Proposition 3.10]{Ra} 
\end{proof}

The main motivation for the definition of (finitely) weakly cocomplete (resp. complete) $n$-categories comes from the following class of examples. 

\begin{example} \label{weakly_cocomplete_example}
Let $\C$ be a (finitely) cocomplete (resp. complete) $\infty$-category. Then the homotopy $n$-category $\h_n \C$ is (finitely) weakly $n$-cocomplete (resp. $n$-complete) \cite[Proposition 3.20, Corollary 3.22]{Ra}. 
\end{example}
 
We refer to \cite[Sections 3 and 6.1--6.2]{Ra} for more details about the properties of higher weak (co)limits in higher homotopy categories. 

\begin{remark}
It is important to observe the role of $n$ in Definition \ref{weakly_cocomplete_def} and how this definition singles out a distinguished class of $n$-categories which is specific to each $n \geq 1$. An $n$-category can always be regarded as an $(n+1)$-category, but a weakly (finitely) cocomplete $n$-category $\C$ is not a weakly (finitely) cocomplete $(n+1)$-category in general -- this happens only when $\C$ is (finitely) cocomplete. 
\end{remark}

\subsection{Higher weak (co)limits and slice $\infty$-categories} It is generally known how to identify appropriate (co)limits in slice $\infty$-categories in 
terms of (co)limits in the underlying $\infty$-category. The purpose of this subsection is to establish analogous inheritance properties of higher weak colimits 
under passing to appropriate slice $\infty$-categories. 

\begin{proposition}\label{limits under diagrams}
Let $\C$ be an $\infty$-category and let $t \geq 0$ be an integer or $t = \infty$. Let $K$ be a $d$-dimensional simplicial set and $f  \colon K \to \C$ a $K$-diagram in $\C$. We denote by $p \colon \C_{f/} \to \C$ (resp. $q \colon \C_{/f} \to \C$) the associated left (resp. right) fibration. 

Let $L$ be a simplicial set. Then the following statements hold:
\begin{enumerate}
\item Assume that $d \leq t$ and suppose that $g \colon L^{\triangleleft} \to \C_{f/}$ is a cone on $g_0 = g_{|L} \colon L \to \C_{f/}$ such that $p \circ g \colon L^{\triangleleft} \to \C$ is a weak limit of $p \circ g_0$ of order $t$. Then $g$ is a weak limit of order $(t-d-1)$.
\item Assume that $d \leq t$ and let $g_0 \colon L \to \C_{f/}$ be an $L$-diagram. Suppose that $\overline{g} \colon L^{\triangleleft} \to \C$ is a weak limit of order $t$ of the composition $L \stackrel{g_0}{\to} \C_{f/} \stackrel{p}{\to} \C$. Then we can lift $\overline{g}$ to a weak limit of $g_0$ in $\C_{f/}$ of order $(t-d-1)$. 
\item Suppose that $g \colon L^{\triangleleft}\to \C_{f/}$ is a weak limit of order $t$ of an $L$-diagram $g_0 = g_{|L} \colon L \to \C_{f/}$, where $t \geq 1$. Assume that the diagram $L \stackrel{g_0}{\to} \C_{f/} \stackrel{p}{\to} \C$ admits a weak limit of order $(t+d+1)$. Then this weak limit is given (up to equivalence) by the composition $L^{\triangleleft} \stackrel{g}{\to} \C_{f/} \stackrel{p}{\to} \C$.
\end{enumerate}

Analogous statements also hold for weak colimits in $\C_{/f}$:

\begin{enumerate}
\item Assume that $d \leq t$ and suppose that $g \colon L^{\triangleright} \to \C_{/f}$ is a cone on $g_0 = g_{|L} \colon L \to \C_{/f}$ such that $q \circ g \colon L^{\triangleright} \to \C$ is a weak colimit of $q \circ g_0$ of order $t$. Then $g$ is a weak colimit of order $(t-d-1)$.
\item Assume that $d \leq t$ and let $g_0 \colon L \to \C_{/f}$ be an $L$-diagram. Suppose that $\overline{g} \colon L^{\triangleright} \to \C$ is a weak colimit  of order $t$ of the composition $L \stackrel{g_0}{\to} \C_{/f} \stackrel{q}{\to} \C$. Then we can lift $\overline{g}$ to a weak colimit of $g_0$ in $\C_{/f}$ of order $(t-d-1)$. 
\item Suppose that $g \colon L^{\triangleright}\to \C_{/f}$ is a weak colimit of order $t$ of an $L$-diagram $g_0 = g_{|L} \colon L \to \C_{/f}$, where $t \geq 1$. Asume that the diagram $L \stackrel{g_0}{\to} \C_{/f} \stackrel{q}{\to} \C$ admits a weak colimit of order $(t + d + 1)$. Then this weak colimit is given (up to equivalence) by the composition $L^{\triangleright} \stackrel{g}{\to} \C_{/f} \stackrel{q}{\to} \C$.
\end{enumerate}
\end{proposition}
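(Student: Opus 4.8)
The plan is to deduce all three statements (and their duals) from the lifting-problem characterization of higher weak (co)limits in Proposition \ref{characterization_weak_colimit}(d), together with the standard join/slice formalism (cf. \cite{HTT}) and an induction over the skeleta of $K$. I treat the weak limit case and set $s = t-d-1$; the colimit statements in $\C_{/f}$ are entirely dual, replacing under-slices by over-slices, $(-)^{\triangleleft}$ by $(-)^{\triangleright}$, and reversing the relevant joins. The one fact I use repeatedly is that a map $S \to \C_{f/}$ is the same datum as a map $K \star S \to \C$ restricting to $f$ on $K$; consequently, for a diagram $g_0 \colon L \to \C_{f/}$ with image $h_0 = p g_0$, the iterated slice $(\C_{f/})_{/g_0}$ is described by extensions in $\C$ of a fixed diagram on $K \star L$, with the apex of any cone inserted \emph{between} $K$ and $L$.

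For part (1), I would invoke Proposition \ref{characterization_weak_colimit}(d): $g$ is a weak limit of order $s$ if and only if every lifting problem of $\partial\Delta^k \hookrightarrow \Delta^k$ against $\bar p \colon (\C_{f/})_{/g} \to (\C_{f/})_{/g_0}$ with $0 \le k \le s$ has a filler. Through the slice/join dictionary this becomes the problem of extending a given map $K \star \partial\Delta^{k+1} \star L \to \C$ over $K \star \Delta^{k+1} \star L$, keeping $f$ fixed on $K$; here $\Delta^{k+1} = \Delta^k \star \Delta^0$ with the extra vertex the apex, and the join-boundary identity $(\partial\Delta^k \star \Delta^0) \cup \Delta^k = \partial\Delta^{k+1}$ produces the boundary. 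I would then solve this by induction on the skeleta of $K$: attaching a nondegenerate $j$-simplex requires, via $\partial(\Delta^j \star \Delta^{k+1}) = (\partial\Delta^j \star \Delta^{k+1}) \cup (\Delta^j \star \partial\Delta^{k+1})$, an extension of $\partial\Delta^{j+k+2} \star L \to \C$ over $\Delta^{j+k+2} \star L$ whose apex-and-$L$ directions carry the cone $pg$. This is precisely a lifting problem solved by Proposition \ref{characterization_weak_colimit}(d) for the order-$t$ weak limit $pg$, and it is admissible exactly when $(j+k+2)-1 \le t$, i.e. $j+k+1 \le t$. Since $j \le d$ and $k \le s = t-d-1$, one gets $j+k+1 \le t$; this is where the dimension shift by $d+1$ and the hypothesis $d \le t$ enter.

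For part (2), I would first construct a lift $g \colon L^{\triangleleft} \to \C_{f/}$ of $\overline g$ extending $g_0$. This is once more an extension problem: glue $\hat g_0 \colon K \star L \to \C$ and $\overline g \colon \Delta^0 \star L \to \C$ (which agree on $L$) and extend over $K \star \Delta^0 \star L$ with $f$ fixed on $K$. The same skeletal induction reduces it to extensions $\partial\Delta^{j+1} \star L \hookrightarrow \Delta^{j+1} \star L$ solved by $\overline g$ being a weak limit of order $t$, admissible whenever $j \le t$, which holds since $j \le d \le t$. Having produced $g$ with $pg = \overline g$, part (1) shows $g$ is a weak limit of $g_0$ of order $s$. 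For part (3), I would apply (2) to the order-$(t+d+1)$ weak limit of $h_0$ to obtain a lift $g'$ which is a weak limit of $g_0$ of order $(t+d+1)-d-1 = t$ with $pg'$ equivalent to that weak limit; since $g$ is also a weak limit of $g_0$ of order $t \ge 1$, the uniqueness of higher weak limits (Proposition \ref{Kan_complex_weakly_initial}, applied inside $(\C_{f/})_{/g_0}$, where the hypothesis $t \ge 1$ is used) gives $g \simeq g'$, whence $pg$ is the prescribed weak limit.

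I expect the main obstacle to be the careful bookkeeping of the nested join/slice identities: checking that each lifting problem genuinely translates into an extension over a join of the predicted form, and -- crucially -- that in the skeletal induction the boundary data handed to Proposition \ref{characterization_weak_colimit}(d) really restricts to the cone $pg$ (resp. $\overline g$) along the apex-and-$L$ directions, so that it is an admissible instance. Once the translation and the index count $j+k+1 \le t$ are in place, the inductive filling is routine, the degenerate simplices of $K$ needing no attention since the map on them is already forced by lower skeleta.
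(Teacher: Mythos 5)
Your proposal is correct and follows essentially the same route as the paper's proof: both reduce each statement to lifting problems against the relevant slice fibrations via Proposition \ref{characterization_weak_colimit}(d) and the join/slice adjunction, and both settle (3) by lifting the high-order weak limit via (2) and invoking the uniqueness of weak limits of order $\geq 1$ (Proposition \ref{Kan_complex_weakly_initial}). Your explicit skeletal induction over $K$, with the cell count $j+k+1 \leq t$, is just an unpacking of the paper's shorter observation that the adjoint lifting problem $K \ast \partial\Delta^k \hookrightarrow K \ast \Delta^k$ over $\C_{/p\circ g_0}$ only involves simplices of dimension $\leq \dim(K \ast \Delta^k) \leq t$.
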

\begin{proof}(1): By Proposition \ref{characterization_weak_colimit}, it suffices to prove that for every diagram as follows, where $0 \leq k \leq t- d -1$,  
\[
\begin{tikzcd}[column sep=small]
\partial \Delta^k  \arrow[r] \arrow[d,hook] & (\C_{f/})_{/g} \arrow[d] \\
\Delta^k   \arrow[ur,dashed]  \arrow[r] & (\C_{f/})_{/g_0} \\
\end{tikzcd}
\]
there is a diagonal filler which makes the diagram commutative. Equivalently, it suffices to prove that the associated adjoint lifting problem
\[
\begin{tikzcd}[column sep=small]
K \ast \partial \Delta^k \arrow[r] \arrow[d,hook] & \C_{/p \circ g} \arrow[d] \\
K \ast \Delta^k  \arrow[ur,dashed] \arrow[r] & \C_{/p \circ g_0} \\
\end{tikzcd}
\]
admits a diagonal filler. Note that the dimension of $K \ast  \Delta^k$ is $\leq t$. Then Proposition \ref{characterization_weak_colimit}(d) shows that a diagonal filler exists since $p\circ g$ is a weak limit of order $t$ in $\C$ by assumption. 

(2): Using (1), we only need to show that we can lift $\overline{g}$ along $p \colon \C_{f/} \to \C$ to a cone on $g_0$. By adjunction, this amounts to finding a diagonal filler that makes the following diagram commutative:
\[
\begin{tikzcd}
\varnothing \arrow[d,hook] \arrow[r] & \C_{/\overline{g}} \arrow[d] \\
K \arrow[ur,dashed] \arrow[r] & \C_{/p \circ g_0}.
\end{tikzcd}
\]
Here the bottom morphism is defined by $g_0$ and the right vertical map is the canonical right fibration (note that $p \circ g_0 = \overline{g}_{|L}$). 
Since $\overline{g}$ is a weak limit of order $t$ and the dimension $d$ of $K$ is $\leq t$, this diagonal filler exists by Proposition \ref{characterization_weak_colimit}(d). 

(3): 
Let $\overline{h} \colon L^{\triangleleft} \to \C$ be a weak limit of $L \stackrel{g_0}{\to} \C_{f/} \stackrel{p}{\to} \C$ of order $(t+d+1)$. By (2), we may lift this along $p$ to a weak limit $h \colon L^{\triangleleft} \to \C_{f/}$ of order $t$. The two weak limits $h$ and $g$ of $g_0 \colon L \to \C_{f/}$ are of order $\geq 1$ by assumption. Thus, they must be equivalent by Proposition \ref{Kan_complex_weakly_initial}. As a consequence, the cone $\overline{h}$ is also equivalent to the image of the weak limit $g$ under $p$, that is, the composition $L^{\triangleleft} \stackrel{g}{\to} \C_{f/} \stackrel{p}{\to} \C$.
\end{proof}

\begin{corollary}\label{limits_under_objects}
Let $\C$ be an $\infty$-category, $x \in \C$ an object, and let $t \geq 0$ be an integer or $t = \infty$. We denote by $p \colon \C_{x/} \to \C$ (resp. $q \colon \C_{/x} \to \C$) the associated left (resp. right) fibration.  
\begin{enumerate}
\item Let $L$ be a simplicial set and $g_0 \colon L \to \C_{x/}$ an $L$-diagram in $\C_{x/}$. Then a weak limit of $p \circ g_0 \colon L \to \C$ of order $t$ lifts to a weak limit of $g_0$ of order $(t-1)$.
\item Let $L$ be a simplicial set and $g_0 \colon L \to \C_{/x}$ an $L$-diagram in $\C_{/x}$. Then a weak colimit of $q \circ g_0 \colon L \to \C$ of order $t$ lifts to a weak colimit of $g_0$ of order $(t-1)$.
\end{enumerate} 
\end{corollary}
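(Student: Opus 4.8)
The plan is to deduce both statements directly from Proposition \ref{limits under diagrams}, specialized to the case where the indexing diagram $f$ is a single object. Indeed, the two parts of the corollary are precisely what one obtains by taking $K = \Delta^0$ in part (2) of Proposition \ref{limits under diagrams} and in its colimit dual. So essentially no new argument is needed; the work has already been done in that proposition, and the task is merely to set up the specialization correctly and track the order shift.

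For part (1), I would let $f \colon \Delta^0 \to \C$ be the diagram picking out the object $x \in \C$. Then $K = \Delta^0$ has dimension $d = 0$, so the hypothesis $d \leq t$ of the proposition reduces to $0 \leq t$, which holds by assumption. Under this identification the slice $\C_{f/}$ is exactly $\C_{x/}$, and the associated left fibration $\C_{f/} \to \C$ is the map $p \colon \C_{x/} \to \C$. Now let $g_0 \colon L \to \C_{x/}$ be the given $L$-diagram and suppose that $p \circ g_0 \colon L \to \C$ admits a weak limit $\overline{g} \colon L^{\triangleleft} \to \C$ of order $t$. Since the composition $L \stackrel{g_0}{\to} \C_{f/} \stackrel{p}{\to} \C$ is exactly $p \circ g_0$, Proposition \ref{limits under diagrams}(2) applies and lets us lift $\overline{g}$ along $p$ to a weak limit of $g_0$ in $\C_{x/}$ of order $(t - d - 1) = (t-1)$, which is the assertion.

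Part (2) is entirely dual: take $f \colon \Delta^0 \to \C$ again picking out $x$, so that $\C_{/f} = \C_{/x}$ and the associated right fibration is $q \colon \C_{/x} \to \C$, with $d = 0 \leq t$. Given an $L$-diagram $g_0 \colon L \to \C_{/x}$ whose image $q \circ g_0 \colon L \to \C$ admits a weak colimit of order $t$, the colimit version of Proposition \ref{limits under diagrams}(2) lifts this weak colimit to a weak colimit of $g_0$ in $\C_{/x}$ of order $(t - d - 1) = (t-1)$.

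I do not expect any genuine obstacle here, since the corollary is just the $K = \Delta^0$ instance of an already-established proposition. The only points requiring (minor) care are the two routine identifications that make the specialization legitimate: first, that the slice constructions $\C_{f/}$ and $\C_{/f}$ reduce to $\C_{x/}$ and $\C_{/x}$ when $f$ is a single object (so that $p$, $q$ are the expected left and right fibrations); and second, the bookkeeping of the order shift, namely that substituting $d = 0$ into the formula $(t - d - 1)$ indeed yields the asserted order $(t-1)$.
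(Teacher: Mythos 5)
Your proof is correct and is exactly the paper's intended argument: the corollary is stated without a separate proof precisely because it is the $K=\Delta^0$ (so $d=0$) instance of Proposition \ref{limits under diagrams}(2) and its colimit dual, with the hypothesis $d\leq t$ reducing to $t\geq 0$ and the order $(t-d-1)$ to $(t-1)$, just as you say.
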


\begin{remark} \label{best_possible_estimate1}
Consider the following special case (for $L=\varnothing$): If $x \in \C$ is weakly initial of order $t$, then $(x \stackrel{\mathrm{id}}{\to} x) \in \C_{/x}$ is weakly initial of order $(t-1)$ -- this can easily be deduced directly from the definition of a weakly initial object. This special case shows that the estimates of Corollary \ref{limits_under_objects} are best possible in general. For example, if $\C$ is (the nerve of) an ordinary category and $x \in \C$ is weakly initial (of order $0$), then 
$(x \stackrel{\mathrm{id}}{\to} x) \in \C_{/x}$ is not weakly initial (of order 0) in general (this happens, for example, in the case where $\C$ is the category of non-empty sets and $x$ is a set with two elements).    
\end{remark}

\subsection{Higher weak (co)limits and pullbacks of $\infty$-categories} \label{weak_colimits_slices} It is well known how (co)limits in pullbacks of $\infty$-categories (along (co)limit--preserving functors) are related and induced from (co)limits in the corresponding $\infty$-categories (see, for example, \cite[Lemma 5.4.5.5]{HTT}). The following proposition establishes an analogous property for higher weak (co)limits in pullbacks of $\infty$-categories.     

\begin{proposition}\label{weak_limits_in_pullbacks}
Let $K$ be a simplicial set and let
\[
\begin{tikzcd}
\A \arrow[d,"\gamma"'] \arrow[r,"\beta"] & \B \arrow[d, "\psi"] \\
\C \arrow[r,"\phi"'] & \D
\end{tikzcd}
\]
be a homotopy pullback of $\infty$-categories (with respect to the Joyal model structure). Suppose that $\B$ and $\C$ admit weak $K$-limits of orders $t_{\B}$ and $t_{\C}$ respectively, and $\phi$ sends weak $K$-limits of order $t_{\C}$ to weak $K$-limits of order $t_{\D}$. 

Let $t : = \mathrm{min}(t_{\B}, t_{\C}, t_{\D} - 1)$ and let $H_0 \colon K \to \A$ be a $K$-diagram in $\A$. 
\begin{enumerate}
\item A cone $H \colon K^{\triangleleft} \to \A$ is a weak limit of $H_0$ of order $t$ if $\beta \circ H \colon K^{\triangleleft} \to \B$ is a weak limit of $\beta \circ H_0$ of order $t_{\B}$ and $\gamma \circ H \colon K^{\triangleleft} \to \C$ is a weak limit of $\gamma \circ H_0$ of order $t_{\C}$. The converse also holds if $t \geq 1$ and $\psi$ sends weak $K$-limits of order $t_{\B}$ to weak $K$-limits of order $t_{\D}$. 
\item Suppose that $t_{\D} \geq 1$ and $\psi$ sends weak $K$-limits of order $t_{\B}$ to weak $K$-limits of order $t_{\D}$. Then $\A$ admits weak $K$-limits of order $t$. 
\end{enumerate}
An analogous dual statement holds also for weak colimits in $\A$. 
\end{proposition}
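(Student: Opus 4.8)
The plan is to reduce the statement to the mapping-space characterization of higher weak limits in Proposition~\ref{characterization_weak_colimit}(c), together with the facts that forming functor $\infty$-categories preserves homotopy pullbacks and that mapping spaces in a homotopy pullback of $\infty$-categories are themselves homotopy pullbacks of mapping spaces. The technical heart is an elementary connectivity estimate for spaces that I would establish first: for a morphism between two homotopy cartesian squares of the shape of the square above, if the comparison maps on the $\B$-, $\C$-, and $\D$-corners are $a$-, $b$-, and $c$-connected respectively (with $\D$ the common target corner), then the comparison map on the $\A$-corner is $\min(a, b, c-1)$-connected. I would prove this by a double total-fiber computation: passing to vertical homotopy fibers, the long exact sequence shows that the comparison of fibers over the $\C$-corner is $\min(a, c-1)$-connected (a map from an $(a-1)$-connected space to a $(c-1)$-connected space has $\min(a-1, c-2)$-connected homotopy fiber), and feeding this fiber comparison into the fibration over the $b$-connected $\C$-corner yields $\min(a, b, c-1)$ after one further long exact sequence. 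The shift by $-1$ attached to the $\D$-corner is exactly the origin of the term $t_\D - 1$ in the definition of $t$.

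For the \emph{if} direction of (1), I would fix $y \in \A$ and apply Proposition~\ref{characterization_weak_colimit}(c). Since functor $\infty$-categories preserve homotopy pullbacks, $\A^{K^{\triangleleft}} \simeq \B^{K^{\triangleleft}} \times_{\D^{K^{\triangleleft}}} \C^{K^{\triangleleft}}$ and likewise for $\A^{K}$, so the mapping spaces out of the constant diagram $c_y$ decompose as homotopy pullbacks and the restriction map
$$\map_{\A^{K^{\triangleleft}}}(c_y, H) \longrightarrow \map_{\A^{K}}(c_y, H_0)$$
is identified with the morphism of homotopy pullback squares induced by the three corresponding restriction maps in $\B$, $\C$, and $\D$. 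By Proposition~\ref{characterization_weak_colimit}(c) and the hypotheses, the $\B$- and $\C$-corner comparisons are $t_\B$- and $t_\C$-connected; the $\D$-corner comparison is $t_\D$-connected because $\psi\beta H \simeq \phi\gamma H$ and, by the standing hypothesis, $\phi$ carries the weak limit $\gamma H$ of order $t_\C$ to a weak limit of order $t_\D$. The connectivity estimate then gives that the restriction map is $\min(t_\B, t_\C, t_\D - 1) = t$-connected; as $y$ was arbitrary, $H$ is a weak limit of order $t$.

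For (2) I would produce a weak limit of $H_0$ in $\A$ by gluing. Choosing a weak limit $H_\B$ of $\beta H_0$ of order $t_\B$ in $\B$ and a weak limit $H_\C$ of $\gamma H_0$ of order $t_\C$ in $\C$, both $\psi H_\B$ and $\phi H_\C$ are weak limits of $\psi\beta H_0 \simeq \phi\gamma H_0$ of order $t_\D \geq 1$ (using that $\psi$, respectively $\phi$, preserves the relevant weak limits). By Proposition~\ref{Kan_complex_weakly_initial} they are therefore equivalent as objects of $\D_{/\psi\beta H_0}$, and this equivalence is precisely the datum needed to assemble $H_\B$ and $H_\C$, through the homotopy pullback description of $\A^{K^{\triangleleft}}$, into a cone $H \colon K^{\triangleleft} \to \A$ restricting to $H_0$. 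The \emph{if} part then shows $H$ is a weak limit of order $t$, which gives (2). For the converse in (1), any weak limit $H'$ of $H_0$ of order $t \geq 1$ is, like the glued $H$, a weakly terminal object of order $t > 0$ in $\A_{/H_0}$, hence equivalent to $H$ by Proposition~\ref{Kan_complex_weakly_initial}; pushing this equivalence forward along $\beta$ and $\gamma$ and using that being a weak limit of a fixed order is invariant under equivalence of cones shows that $\beta H'$ and $\gamma H'$ are weak limits of orders $t_\B$ and $t_\C$. The dual statement for weak colimits follows by applying the above to the opposite $\infty$-categories.

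The step I expect to be the main obstacle is checking carefully that the restriction map assembles, \emph{naturally in the restriction variable} $K \hookrightarrow K^{\triangleleft}$, as a genuine morphism of homotopy pullback squares carrying the stated corner connectivities -- that is, the compatibility of the homotopy-pullback decomposition of mapping spaces with restriction -- together with the precise index bookkeeping in the connectivity estimate, where the off-by-one at the $\D$-corner must be tracked exactly so as to land on $t_\D - 1$.
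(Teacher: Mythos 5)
Your proposal is correct and follows essentially the same route as the paper's proof: both reduce to the mapping-space characterization of Proposition~\ref{characterization_weak_colimit}(c), decompose the relevant mapping spaces as homotopy pullbacks of those in $\B$, $\C$, $\D$, extract the connectivity estimate $\min(t_{\B}, t_{\C}, t_{\D}-1)$ from the long exact sequence of homotopy groups, and obtain part (2) and the converse of (1) by gluing weak limits in $\B$ and $\C$ along their images in $\D$, which are equivalent as cones of order $\geq 1$ by Proposition~\ref{Kan_complex_weakly_initial}. The only differences are cosmetic: the paper compares the cone-object mapping space $\map_{\A}(a,x)$ directly with $\map_{\A^K}(c_a, H_0)$ instead of passing through $\A^{K^{\triangleleft}}$, and it leaves to the reader the fiberwise long-exact-sequence lemma that you spell out in detail.
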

\begin{proof} We will apply the characterization of higher weak (co)limits from Proposition \ref{characterization_weak_colimit}(c).  Let $x \in \A$ be the cone object of the cone $H$ and let $a \in \A$ be an arbitrary object. Then we have the following diagram in the $\infty$-category of spaces, in which the two horizontal squares are pullback squares:
\[
\footnotesize{
\begin{tikzcd}[row sep=tiny, column sep=tiny]
& \map_{\A}(a,x) \arrow[dd,dashrightarrow] \arrow[ld, rightarrow] \arrow[rr, rightarrow] && \map_{\B}(\beta(a),\beta(x)) \arrow[dd] \arrow[dl] \\
\map_{\C}(\gamma(a),\gamma(x)) \arrow[dd] \arrow[rr] && \map_{\D}(\phi(\gamma(a)),\phi(\gamma(x))) \arrow[dd] \\
& \map_{\A^K}(c_a, H_0) \arrow[dl,dashrightarrow] \arrow[rr,dashrightarrow] && \map_{\B^K}(c_{\beta(a)}, \beta \circ H_0) \arrow[dl] \\
\map_{\C^K}(c_{\gamma(a)}, \gamma \circ H_0) \arrow[rr] && \map_{\D^K}(c_{\phi(\gamma(a))}, (\phi \circ \gamma) \circ H_0 ).
\end{tikzcd}
}\]
The vertical maps are the canonical maps from Proposition \ref{characterization_weak_colimit}(c). The bottom square is a pullback because $\A^K$ is a homotopy pullback of the induced diagram of $\infty$-categories: $(\C^K \rightarrow \D^K \leftarrow \B^K)$. By assumption and Proposition \ref{characterization_weak_colimit}, the three front/solid vertical maps are $t_{\C}$-connected, $t_{\D}$-connected, and $t_{\B}$-connected, respectively. Contemplating the long exact sequence of homotopy groups (for pullbacks of spaces)
% for $\Omega F_{\mathcal{D}} \to F_{\mathcal{A}} \to F_{\mathcal{C}} \times F_{\mathcal{B}}$ 
shows that the last (dashed) vertical map is $t$-connected. This proves the first part of (1). 

For the converse in (1), suppose that $H \colon K^{\triangleleft} \to \A$ is a weak $K$-limit of $H_0$ of order $t$. Since $t \geq 1$, by assumption, it follows that $t_{\B}, t_{\C}, t_{\D} \geq 1$. Let $H' \colon K^{\triangleleft} \to \B$ be a weak $K$-limit of $\beta \circ H_0$ of order $t_{\B}$ and 
$H'' \colon K^{\triangleleft} \to \C$ a weak $K$-limit of $\gamma \circ H_0$ of order $t_{\C}$. By assumption, the cones $\psi \circ H'$ and $\phi \circ H''$ are weak $K$-limits of order $\geq 1$, and therefore, by Proposition \ref{Kan_complex_weakly_initial}, they are equivalent. Then $H', H''$ and 
$\psi \circ H' \simeq \phi \circ H''$ induce a cone $\widetilde{H} \colon K^{\triangleleft} \to \A$ on $H_0$. Using (1), $\widetilde{H}$ is a weak $K$-limit of 
$H_0$ of order $t$. Since $t \geq 1$, Proposition \ref{Kan_complex_weakly_initial} implies that the cones $H$ and $\widetilde{H}$ are equivalent and then the desired result follows. 

For (2), suppose that $H_0 \colon K \to \A$ is a $K$-diagram in $\A$. Proceeding as in the previous argument above, we may extend this to a cone $H \colon K^{\triangleleft} \to \A$ such that $\beta \circ H$ is a weak $K$-limit of $\beta \circ H_0$ of order $t_{\B}$ and $\gamma \circ H$ is a weak $K$-limit of $\gamma \circ H_0$ of order $t_{\C}$. Then it follows from (1) that $H \colon K^{\triangleleft} \to \A$ is a weak $K$-limit of $H_0$ of order $t$, proving (2). 
\end{proof}

\begin{remark}\label{remark_weak_limits_in_pullbacks}
In the proof of the first part of Proposition \ref{weak_limits_in_pullbacks}(1), we do not need the assumption that $\mathscr{B}$ and $\C$ admit general weak $K$-limits (of orders $t_{\mathscr{B}}$ and $t_{\C}$).
\end{remark}

\begin{corollary} \label{limits_slices_under_objects}
Let $K$ be a simplicial set and let $\D$ be an $\infty$-category which admits weak $K$-limits of order $t$, where $t \geq 0$ is an integer or $t = \infty$. Suppose that $G:\D \to \C$ is a functor that preserves weak $K$-limits of order $t$. Then the $\infty$-category $G_{x/}$ has weak $K$-limits of order $(t-1)$ for any object $x \in \C$. (There is an analogous dual statement for weak colimits in $G_{/x}$.)
\end{corollary}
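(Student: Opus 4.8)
The plan is to exhibit $G_{x/}$ as a homotopy pullback and then combine Corollary \ref{limits_under_objects} with the ``if'' direction of Proposition \ref{weak_limits_in_pullbacks}(1). Recall that $G_{x/}$ is the strict pullback $\C_{x/} \times_{\C} \D$, so that we have a square
\[
\begin{tikzcd}
G_{x/} \arrow[r,"\beta"] \arrow[d,"\gamma"'] & \D \arrow[d,"G"] \\
\C_{x/} \arrow[r,"p"'] & \C
\end{tikzcd}
\]
where $p$ is the canonical left fibration; this is a homotopy pullback since $p$ is a left fibration (hence a categorical fibration), and $\beta$ is again a left fibration as a base change of $p$. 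I treat the case of weak limits, the colimit case being dual. The essential point — and the reason the statement is \emph{not} an immediate instance of Proposition \ref{weak_limits_in_pullbacks}(2) — is that no completeness hypothesis is imposed on $\C$, so I cannot assume that $\C_{x/}$ admits weak $K$-limits. I will therefore construct the required weak-limit cone in $G_{x/}$ by hand.

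Given a diagram $g_0 \colon K \to G_{x/}$, I first choose a weak limit $\overline{g} \colon K^{\triangleleft} \to \D$ of $\beta \circ g_0$ of order $t$, which exists by hypothesis. Since $p \circ \gamma = G \circ \beta$ and $G$ preserves weak $K$-limits of order $t$, the cone $G \circ \overline{g}$ is a weak limit of $p \circ \gamma \circ g_0$ of order $t$ in $\C$. By Corollary \ref{limits_under_objects}(1), applied to the left fibration $p$ and the diagram $\gamma \circ g_0$, this weak limit lifts along $p$ to a weak limit $\widetilde{g} \colon K^{\triangleleft} \to \C_{x/}$ of $\gamma \circ g_0$ of order $(t-1)$, and the lift is strict, i.e.\ $p \circ \widetilde{g} = G \circ \overline{g}$.

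Because $p \circ \widetilde{g} = G \circ \overline{g}$, the pair $(\widetilde{g}, \overline{g})$ determines a cone $H \colon K^{\triangleleft} \to G_{x/} = \C_{x/} \times_{\C} \D$ with $\gamma \circ H = \widetilde{g}$, $\beta \circ H = \overline{g}$ and $H_{|K} = g_0$. I then invoke the first part of Proposition \ref{weak_limits_in_pullbacks}(1), in the notation of which I take $(\A, \B, \C, \D) = (G_{x/}, \D, \C_{x/}, \C)$ and orders $t_{\B} = t$, $t_{\C} = t-1$, $t_{\D} = t$: the cone $\beta \circ H = \overline{g}$ is a weak limit of order $t_{\B}$, the cone $\gamma \circ H = \widetilde{g}$ is a weak limit of order $t_{\C}$, and its image $p \circ \widetilde{g} = G \circ \overline{g}$ is a weak limit of order $t_{\D}$ in $\C$ — so the single instance of the preservation hypothesis on $p$ that the argument uses is verified for this particular cone. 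By Remark \ref{remark_weak_limits_in_pullbacks} the ``if'' direction of (1) requires no weak (co)completeness of $\D$ or $\C_{x/}$, and it yields that $H$ is a weak limit of $g_0$ of order $\min(t_{\B}, t_{\C}, t_{\D}-1) = t-1$. As $g_0$ was arbitrary, $G_{x/}$ admits weak $K$-limits of order $(t-1)$.

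The genuine-limit assertion is the case $t = \infty$, in which weak $K$-limits of order $\infty$ are exactly $K$-limits (Example \ref{weak-colimits-infinity}) and the same argument applies verbatim, the order $(t-1)$ again reading as $\infty$; the dual statement for weak colimits in $G_{/x} = \D \times_{\C} \C_{/x}$ follows symmetrically, using Corollary \ref{limits_under_objects}(2) and the colimit version of Proposition \ref{weak_limits_in_pullbacks}. The one step requiring care is precisely the one isolated above: because $\C$ need not admit weak limits, one cannot appeal to the existence part of Proposition \ref{weak_limits_in_pullbacks}(2), and must instead build the cone $H$ explicitly through Corollary \ref{limits_under_objects} and use only the ``if'' direction of Proposition \ref{weak_limits_in_pullbacks}(1), which by Remark \ref{remark_weak_limits_in_pullbacks} is free of ambient completeness assumptions.
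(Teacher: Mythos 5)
Your proof is correct and follows essentially the same route as the paper's own argument: exhibit $G_{x/}$ as the (homotopy) pullback of $p \colon \C_{x/} \to \C$ along $G$, take a weak limit of the projected diagram in $\D$, push it to $\C$ via the preservation hypothesis, lift it to $\C_{x/}$ of order $(t-1)$ by Corollary \ref{limits_under_objects}, assemble the resulting cone in the pullback, and conclude with the first part of Proposition \ref{weak_limits_in_pullbacks}(1) together with Remark \ref{remark_weak_limits_in_pullbacks}. Your explicit verification that the single cone $p \circ \widetilde{g} = G \circ \overline{g}$ is a weak limit of order $t_{\D} = t$ (so that no completeness hypothesis on $\C$ or $\C_{x/}$ is needed) is exactly the point the paper delegates to Remark \ref{remark_weak_limits_in_pullbacks}, spelled out slightly more carefully.
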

\begin{proof}
We recall that the $\infty$-category $G_{x/}$ is defined by the (homotopy) pullback:
\[
\begin{tikzcd}
G_{x/} \arrow[d, "q"] \arrow[r] & \C_{x/} \arrow[d, "p"] \\
\D \arrow[r,"G"'] & \C.
\end{tikzcd}
\]
Let $H_0:K \to G_{x/}$ be a $K$-diagram. By assumption, the induced diagram $q \circ H_0$ in $\D$ has a weak limit $H' \colon K^{\triangleleft} \to \D$ of order $t$. Since $G$ preserves weak $K$-limits of order $t$, the cone $G \circ H' \colon K^{\triangleleft} \to \C$ is again a weak $K$-limit of order $t$. By Corollary \ref{limits_under_objects}, there is a lift of $G \circ H'$ to a weak $K$-limit $H'' \colon K^{\triangleleft} \to \C_{x/}$ of order $(t-1)$. The cones 
$H'$ and $H''$ induce a cone $H \colon K^{\triangleleft} \to G_{x/}$ on $H_0$. Finally, by applying  Proposition \ref{weak_limits_in_pullbacks} (and Remark \ref{remark_weak_limits_in_pullbacks}), we conclude that the induced cone $H \colon K^{\triangleleft} \to G_{x/}$ is a weak $K$-limit of $H_0$ of order $(t-1)$.
\end{proof}

%\begin{remark}
%Using Proposition \ref{limits under diagrams} (2) one can prove analogous statements for weak limits in generalized slices under diagrams indexed by finite dimensional simplicial sets.
%\end{remark}

\begin{remark} \label{best_possible_estimate2}
Similarly to Remark \ref{best_possible_estimate1}, it is easy to see that the estimate in Corollary \ref{limits_slices_under_objects} is best possible in general (consider, for example, the case where $G \colon \Delta^0 \to \C$ is the inclusion of a weakly terminal object of order $0$ and $K = \varnothing$). For a different example, suppose that $\D$ is (the nerve of) an ordinary category with weak pushouts (of order $0$) and let $G \colon \D^{\op} \to \Set$ be a representable functor. Note that $G$ preserves weak pullbacks (of order $0$). Then the category $G_{\ast/}$, that is, the category of elements associated to $G$, admits only weak pullbacks of order $(-1)$ in general, that is, cones of this type exist but have no additional properties.  
\end{remark}

\section{Adjoint functor theorems} \label{AFTs}

\subsection{Adjoint functors} 
We recall the definition of adjoint functors between $\infty$-categories as well as some standard criteria for recognising adjoint functors (see \cite[5.2]{HTT}
and \cite[Chapter 6]{Ci} for a detailed exposition). 

\smallskip

For a map of simplicial sets $q \colon \mathscr{M} \to \Delta^1$ we write $\mathscr{M}_0$ (resp. $\mathscr{M}_1$) for the fiber over $0 \in \Delta^1$ (resp. the fiber over $1 \in \Delta^1$). The map $q$ is called \textit{bicartesian} if it is both cartesian and cocartesian. Note that in this case $q$ determines functors $\mathscr{M}_0 \to \mathscr{M}_1$ and $\mathscr{M}_1 \to \mathscr{M}_0$, essentially uniquely. 

\begin{definition}
Let $\C$ and $\D$ be $\infty$-categories. An \emph{adjunction} between $\C$ and $\D$ consists of a bicartesian fibration $q \colon \mathscr{M} \to \Delta^1$ and equivalences $\C \simeq \mathscr{M}_0$ and $\D \simeq \mathscr{M}_1$. These data determine functors, essentially uniquely,
\[
F \colon \C \to \D
\]
and
\[
G \colon \D \to \C.
\]
Then we say that $F$ is left adjoint to $G$ (resp. $G$ is right adjoint to $F$).
\end{definition}

A different characterization of adjoint functors between $\infty$-categories, which is analogous to the usual definition of an adjunction in ordinary category theory, is as follows: Given a pair of functors $F \colon \C \rightleftarrows \D \colon G$, $F$ is left adjoint to $G$ if and only if there is a natural transformation 
$$u \colon \mathrm{id}_{\C} \to G \circ F$$
such that the composition 
$$\mathrm{map}_{\D}(F(c), d) \stackrel{G}{\to} \mathrm{map}_{\C}(G(F(c)), G(d)) \stackrel{u^\ast}{\to} \mathrm{map}_{\C}(c, G(d))$$
is a weak equivalence for all $c \in \C$ and $d \in \D$. The natural transformation $u$ is the \emph{unit transformation} of the adjunction 
and it can be constructed using the bicartesian properties of the adjunction (see \cite[Proposition 5.2.2.8]{HTT}). If it exists, an adjoint of a functor is uniquely determined \cite[Proposition 5.2.6.2]{HTT}.

\bigskip

We will make use of the following useful characterizations for the recognition of adjoint functors.

\begin{proposition} \label{characterization_adjoint_functor}
Let $G \colon \D \to \C$ be a functor between $\infty$-categories. Then the following are equivalent:
\begin{enumerate}
\item The functor $G$ admits a left adjoint. 
\item The $\infty$-category $G_{c/}$ has an initial object for every object $c \in \C$.
\item The functor
\[
\mathrm{map}_{\C}(c,G(-)) \colon \D \to \S
\] 
is corepresentable for every object $c \in \C$.
\end{enumerate}
Dually, we have the following statement: \\ Let $F \colon \C \to \D$ be a functor between $\infty$-categories. Then the following are equivalent:
\begin{enumerate}[label={(\arabic*$'$})]
\item The functor $F$ admits a right adjoint.
\item The $\infty$-category $F_{/d}$ has a terminal object for every object $d \in \D$.
\item The functor
\[
\mathrm{map}_{\D}(F(-),d) \colon \C^{op} \to \S
\]
is representable for every object $d \in \D$.
\end{enumerate}
\end{proposition}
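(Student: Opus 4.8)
The plan is to prove the chain of equivalences $(1) \Leftrightarrow (2) \Leftrightarrow (3)$ and then deduce the dual statement by passing to opposite $\infty$-categories. The geometric object organizing everything is the slice $G_{c/}$, which sits in a homotopy pullback square built from $G \colon \D \to \C$ and the left fibration $\C_{c/} \to \C$. Since $\C_{c/} \to \C$ is the left fibration classified by the corepresentable functor $\map_\C(c,-) \colon \C \to \S$, base change along $G$ shows that the left fibration $G_{c/} \to \D$ is classified by the composite $\map_\C(c, G(-)) \colon \D \to \S$. This identification is the hinge of the whole argument.

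With this in hand, $(2) \Leftrightarrow (3)$ is an instance of a general principle: a left fibration $p \colon \mathscr{E} \to \D$ classifying a functor $H \colon \D \to \S$ admits an initial object if and only if $H$ is corepresentable. In one direction, an object $d_0 := p(e_0)$ with $e_0 \in \mathscr{E}$ initial determines, via $e_0$ viewed as a point of $H(d_0)$ and the Yoneda lemma, a natural transformation $\map_\D(d_0, -) \to H$; one checks this is an equivalence precisely because $e_0$ is initial. Conversely, a corepresenting datum supplies an initial object. Applying this to $H = \map_\C(c, G(-))$ and $\mathscr{E} = G_{c/}$ gives the equivalence.

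For $(1) \Leftrightarrow (2)$ I would argue directly. If $F$ is left adjoint to $G$ with unit $u$, then $u_c \colon c \to G F(c)$ is an object of $G_{c/}$, and the defining equivalence $\map_\D(F(c), d) \simeq \map_\C(c, G(d))$ of the adjunction translates precisely into the statement that all mapping spaces out of $u_c$ in $G_{c/}$ are contractible, i.e. $u_c$ is initial; this gives $(1) \Rightarrow (2)$. For the converse $(2) \Rightarrow (1)$, initial objects $u_c \colon c \to G F(c)$ provide the value of the prospective left adjoint on objects together with the components of a prospective unit; the task is to upgrade this pointwise data to an actual functor $F \colon \C \to \D$ and a coherent unit transformation exhibiting the adjunction. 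I expect this to be the main obstacle, since functoriality and coherence are not automatic in the $\infty$-categorical setting. I would dispatch it by appealing to the standard machinery of (co)cartesian fibrations over $\Delta^1$: the correspondence $\mathscr{M} \to \Delta^1$ encoding $G$ is a cartesian fibration, and the existence of initial objects in all the slices $G_{c/}$ is exactly the condition that it also be cocartesian, hence bicartesian, which by definition exhibits a left adjoint (see \cite[5.2.4]{HTT}, \cite[Chapter 6]{Ci}).

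Finally, the dual statement follows formally by applying the first to $G = F^{\op} \colon \C^{\op} \to \D^{\op}$: a right adjoint of $F$ is the opposite of a left adjoint of $F^{\op}$, the slice $(F^{\op})_{d/}$ is $(F_{/d})^{\op}$ so that its initial objects are exactly the terminal objects of $F_{/d}$, and corepresentability of $\map_{\D^{\op}}(d, F^{\op}(-))$ is the representability of $\map_\D(F(-), d) \colon \C^{\op} \to \S$.
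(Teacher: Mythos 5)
Your proposal is correct and takes essentially the same route as the paper, whose proof consists of citing \cite[Proposition 6.1.11]{Ci} and observing that the statement reformulates \cite[Lemma 5.2.4.1]{HTT} via \cite[Proposition 4.4.4.5]{HTT} and \cite[Propositions 4.2.1.5 and 4.2.1.6]{HTT}; your sketch is exactly an unpacking of those cited arguments, with $(2)\Leftrightarrow(3)$ via the classification of the left fibration $G_{c/} \to \D$ and corepresentability, and $(1)\Leftrightarrow(2)$ via bicartesian fibrations over $\Delta^1$, plus the standard passage to opposites for the dual statement.
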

\begin{proof}
This characterization is obtained in \cite[Proposition 6.1.11]{Ci}. It is also a reformulation of \cite[Lemma 5.2.4.1]{HTT} using \cite[Proposition 4.4.4.5]{HTT} and \cite[Propositions 4.2.1.5 and 4.2.1.6]{HTT}. 
\end{proof}

Left adjoint functors preserve colimits and right adjoint functors preserve limits (see \cite[Proposition 5.2.3.5]{HTT}, \cite[Proposition 6.2.15]{Ci}). As the next proposition shows, this property of adjoint functors extends to higher weak (co)limits.

\begin{proposition} \label{adjoint_preserve_weak_co_limits}
Let $F \colon \C \rightleftarrows \D \colon G$ be an adjunction between $\infty$-categories. Let $K$ be a simplicial set and let $t \geq -1$ be an integer or $t = \infty$. Then the following hold:
\begin{enumerate}
\item The left adjoint functor $F$ preserves weak $K$-colimits of order $t$. 
\item The right adjoint functor $G$ preserves weak $K$-limits of order $t$. 
\end{enumerate}
\end{proposition}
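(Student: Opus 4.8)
The plan is to prove (1); statement (2) then follows by the dual argument, or by applying (1) to the opposite adjunction. The case $t = -1$ is trivial: by Example \ref{weakly-colimits--1} every cone is a weak colimit of order $(-1)$, and $F$ sends a cone to a cone, so there is nothing to check. Assume therefore $t \geq 0$ or $t = \infty$. The key tool is the characterization of higher weak colimits in Proposition \ref{characterization_weak_colimit}(c). So let $f \colon K^{\triangleright} \to \C$ be a weak colimit of $f_0 = f_{|K}$ of order $t$ with cone object $x \in \C$; I want to show that $F \circ f \colon K^{\triangleright} \to \D$ is a weak colimit of $F \circ f_0$ of order $t$, with cone object $F(x)$. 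By Proposition \ref{characterization_weak_colimit}(c), this amounts to showing that for every object $d \in \D$ the canonical restriction map
\[
\map_{\D}(F(x), d) \simeq \map_{\D^{K^{\triangleright}}}(F \circ f, c_d) \longrightarrow \map_{\D^K}(F \circ f_0, c_d)
\]
is $t$-connected.

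The main step is to transport this restriction map, via the adjunction, to the corresponding restriction map for $f$ itself. For every simplicial set $L$, the adjunction $F \dashv G$ induces an adjunction $F^L \dashv G^L$ between the functor $\infty$-categories $\C^L$ and $\D^L$ given by postcomposition; one obtains this, for instance, from the bicartesian fibration $\mathscr{M} \to \Delta^1$ defining the adjunction by forming the fiber product $\mathscr{M}^L \times_{(\Delta^1)^L} \Delta^1$ along the constant diagram $\Delta^1 \to (\Delta^1)^L$. Since $G^L$ carries the constant diagram $c_d$ to $c_{G(d)}$, this yields natural equivalences $\map_{\D^L}(F \circ h, c_d) \simeq \map_{\C^L}(h, c_{G(d)})$ for every $h \colon L \to \C$. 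Applying this for $L = K$ and $L = K^{\triangleright}$, and using that the restriction functor $i^{*}$ along the inclusion $i \colon K \hookrightarrow K^{\triangleright}$ commutes with postcomposition by $F$ and by $G$ and preserves constant diagrams, I obtain a commutative square
\[
\begin{tikzcd}[column sep=large]
\map_{\D}(F(x), d) \arrow[r] \arrow[d, "\simeq"'] & \map_{\D^K}(F \circ f_0, c_d) \arrow[d, "\simeq"] \\
\map_{\C}(x, G(d)) \arrow[r] & \map_{\C^K}(f_0, c_{G(d)})
\end{tikzcd}
\]
whose vertical maps are equivalences and whose lower horizontal map is precisely the restriction map attached to the cone $f$ and the test object $G(d)$ in Proposition \ref{characterization_weak_colimit}(c). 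Since $f$ is a weak colimit of order $t$, that lower map is $t$-connected; as the verticals are equivalences, the upper map is $t$-connected as well. This is the required conclusion, and the same reasoning applies verbatim when $t = \infty$ (where ``$t$-connected'' means ``an equivalence'').

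For (2) I would run the dual argument: given a weak limit $f \colon K^{\triangleleft} \to \D$ of order $t$ with cone object $x$, I use Proposition \ref{characterization_weak_colimit}(c) for weak limits together with the adjunction equivalences $\map_{\C}(y, G(x)) \simeq \map_{\D}(F(y), x)$ and $\map_{\C^K}(c_y, G \circ f_0) \simeq \map_{\D^K}(c_{F(y)}, f_0)$, the latter coming from $F^K \dashv G^K$ and the fact that $F^K$ sends constant diagrams to constant diagrams. I expect the one genuine obstacle to be the coherence bookkeeping in the middle paragraph: verifying that the adjunction equivalences on functor categories are natural with respect to the restriction functor $i^{*}$, so that the displayed square really commutes up to coherent homotopy. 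This is routine but is the only place where care is needed; once the functor-category adjunction and its compatibility with constant diagrams and with restriction along $K \hookrightarrow K^{\triangleright}$ are in place, the connectivity comparison through the vertical equivalences is immediate.
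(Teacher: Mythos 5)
Your proposal is correct and follows essentially the same route as the paper's proof: both pass to the induced adjunction $F^L \dashv G^L$ on functor $\infty$-categories, use that it is compatible with constant diagrams and with restriction along $K \hookrightarrow K^{\triangleright}$, and conclude via the $t$-connectedness characterization of Proposition \ref{characterization_weak_colimit}(c). The coherence point you flag as the one delicate step (naturality of the adjunction equivalences in the simplicial set $L$) is exactly what the paper handles by citing \cite[Theorem 6.1.22]{Ci}, and your explicit construction $\mathscr{M}^L \times_{(\Delta^1)^L} \Delta^1$ is the standard way that naturality is established.
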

\begin{proof}
We just prove (1), since the proof of (2) is completely analogous. Let $H \colon K^{\triangleright} \to \C$ be a weak colimit of order $t$ on a diagram $H_0:=H|_{K} \colon K \to \C$; let $x \in \C$ denote the cone object of $H$. Recall that the adjunction $F \colon \C \rightleftarrows \D \colon G$ induces an adjunction on functor $\infty$-categories:
\[
F^X \colon \C^X \rightleftarrows \D^X \colon G^X
\]
for any simplicial set $X$. Moreover, the adjunction equivalences on mapping spaces are natural in $X$ (see \cite[Theorem 6.1.22]{Ci}). This implies that for every object $y \in \D$ there is commutative diagram in the homotopy category of spaces as follows,
\[
\begin{tikzcd}
\mathrm{map}_{\D}(F(x),y) \arrow[d,dash,"\simeq"'] & \mathrm{map}_{\D^{K^{\triangleright}}}(F \circ H,c_y) \arrow[d,dash,"\simeq"'] \arrow[l,"\simeq"] \arrow[r] & \mathrm{map}_{\D^K}(F \circ H_0,c_y) \arrow[d,dash,"\simeq"] \\
\mathrm{map}_{\C}(x,G(y)) & \mathrm{map}_{\C^{K^{\triangleright}}}(H,c_{G(y)}) \arrow[l,"\simeq"] \arrow[r] & \mathrm{map}_{\C^K}(H_0,c_{G(y)}),
\end{tikzcd}
\]
where the vertical maps are the adjunction equivalences, the left horizontal maps are the restrictions along the inclusion of the cone point, and the right horizontal maps are the restrictions along the inclusion $K \subset K^{\triangleright}$; $c_y$ (resp. $c_{G(y)}$) denotes the constant diagram at $y \in \D$ (resp. $G(y) \in \C$). By Proposition \ref{characterization_weak_colimit}, the lower horizontal composition is $t$-connected, therefore the upper horizontal composition is also $t$-connected. Applying Proposition \ref{characterization_weak_colimit} again, we conclude that the cone $F \circ H \colon K^{\triangleright} \to \D$ is a weak colimit of order $t$ on the diagram $F \circ H_0 \colon K \to \D$. 
\end{proof} 

\subsection{Criteria for the existence of initial objects} In this subsection, we will establish some general criteria for the existence of initial objects in an $n$-category. These criteria are essentially refinements and direct generalizations of analogous criteria that appeared in our previous work \cite{NRS}, but there are also some interesting differences, especially, in the proofs of these generalizations. Based on the characterizations of adjoint functors in Proposition \ref{characterization_adjoint_functor}, the criteria of this subsection will be used later to obtain new adjoint functor theorems for $n$-categories. As usual, there are dual statements to the results of this section, concerning criteria for the existence of terminal objects as well as corresponding dual adjoint functor theorems; we will refrain from stating these explicitly in order to simplify the exposition. 

\medskip

We begin by defining the following properties of objects in an $\infty$-category which are weak versions of the property of being an initial object 
(cf. \cite[Section 2]{NRS}).

\begin{definition}\label{def_initial_objs_types}
Let $\C$ be an $\infty$-category. 
\begin{enumerate}
\item A collection of objects $S$ in $\C$ is called \emph{weakly initial} if for every object $y \in \C$, there is $x \in S$ such that $\map_{\C}(x, y)$ is non-empty.  An object $x \in \C$ is \emph{weakly initial} if the set $S = \{x\}$ is weakly initial. (In other words, $x$ is weakly initial in $\C$ of order $0$.)
\item An object $x \in \C$ is \emph{hypoinitial} if there is a cone $C_{x} \colon \C^{\triangleleft} \to \C$ on the identity  
$\mathrm{id} \colon \C \to \C$ with cone object $x$.
\item An object $x \in \C$ is \emph{h-initial} if $x$ is weakly initial of order $1$, that is, if the mapping space $\map_{\C}(x, y)$ is non-empty and connected for any $y \in \C$.
\end{enumerate}
\end{definition}

We collect below some easy observations about the comparison between these different notions:
\begin{itemize}
\item[(a)] Every initial object satisfies the properties in (1)--(3) above. (For (2), see also Proposition \ref{hypoinitial_vs_initial}.)
\item[(b)] Every hypoinitial or $h$-initial object is also weakly initial.
\item[(c)] An object in $\C$ is $h$-initial if and only if it is initial in $\h(\C)$. 
\end{itemize} 

\begin{example} \label{initial_example1}
Let $\C$ be an $\infty$-category with an initial object $0$. Then an object $x \in \C$ is hypoinitial if (and only if) there is a morphism $f \colon x \to 0$ in $\C$.  To see this, it suffices to consider the limit cone $\C^{\triangleleft} \to \C$ on the identity $\mathrm{id}\colon \C \to \C$ with cone object $0$ and form the essentially unique cone which is obtained by precomposition with the morphism $f$. Thus, a hypoinitial object need not be initial or $h$-initial in general.  
\end{example}

\begin{example} \label{initial_example2}
Let $\C$ be the (ordinary) category of non-empty sets. Then every object $x \in \C$ is weakly initial, but $\C$ does not contain hypoinitial or ($h$-)initial objects. To see this, note that if $C_x \colon \C^{\triangleleft} \to \C$ were a cone manifesting $x \in \C$ as a (hypo)initial object, then each of its components $C_x(y) \colon x \to y$ would have to factor through every inclusion $\{*\} \to y$, which is impossible. Thus, a weakly initial object does not ensure the existence of a hypoinitial object in general.   
\end{example}

\begin{example} \label{initial_example3}
Let $\C$ be the (ordinary) category of pointed sets which contain at least two elements. Every object $x \in \C$ is hypoinitial (and therefore also weakly initial) because $x$ is the cone object of the cone with zero maps as components. However, $\C$ does not have an initial object. 
\end{example}

The previous elementary examples demonstrate some of the basic differences between these different notions. Our goal in this subsection is to prove that a weakly initial (resp. hypoinitial, $h$-initial) object implies under appropriate assumptions the existence of an initial object. These will provide useful criteria for 
the existence of initial objects.

\medskip

The first criterion is concerned with the comparison between $h$-initial and initial objects (cf. \cite[Proposition 2.2.2]{NRS}).

\begin{lemma}[Criterion A]\label{criterionA}
Let $\C$ be an $n$-category, where $n \geq 1$ is an integer or $n=\infty$. Suppose that $\C$ admits weak $K$-limits of order $0$ for every finite simplicial set $K$ of dimension $\leq n-1$. Then an object $x \in \C$ is $h$-initial in $\C$ if and only if $x \in \C$ is initial in $\C$.
\end{lemma}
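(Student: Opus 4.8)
The forward implication is immediate: an initial object is weakly initial of every order, in particular of order $1$, hence $h$-initial. The content is the converse, and the plan is to show directly that if $x$ is $h$-initial then $\map_{\C}(x,y)$ is contractible for every $y \in \C$; by Example \ref{weak-colimits-infinity} this says exactly that $x$ is weakly initial of order $\infty$, i.e. initial. Since $x$ is $h$-initial, each $\map_{\C}(x,y)$ is non-empty and connected, and since $\C$ is an $n$-category each $\map_{\C}(x,y)$ is $(n-1)$-truncated, so $\pi_k(\map_{\C}(x,y),f)=0$ automatically for $k \geq n$. Thus it suffices to prove that $\pi_t(\map_{\C}(x,y),f)=0$ for every basepoint $f \colon x \to y$ and every $t$ with $1 \leq t \leq n-1$ (for $n=\infty$ this means all $t \geq 1$, and the truncation remark is vacuous).

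Fix such a $t$ and a class $\alpha \in \pi_t(\map_{\C}(x,y),f)$. The key idea is to realise the sphere $\alpha$ through a weak limit. I would consider the constant diagram $d \colon= c_y \colon K \to \C$ on $K \colon= \partial\Delta^{t+1} \simeq S^t$; as $K$ is finite of dimension $t \leq n-1$, the hypothesis supplies a weak $K$-limit $\omega \colon K^{\triangleleft} \to \C$ of $d$ of order $0$, with cone object $W$. By Proposition \ref{characterization_weak_colimit}(c) (for weak limits), the canonical map
$$\Phi \colon \map_{\C}(x,W) \longrightarrow \map_{\C^{K}}(c_x,d) \simeq \map_{\S}(S^t,\map_{\C}(x,y))$$
is $0$-connected, i.e. surjective on $\pi_0$; explicitly $\Phi$ sends $g \colon x \to W$ to the sphere $v \mapsto \omega_v \circ g$, and the target is computed using $d=c_y$ and $K \simeq S^t$. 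Viewing the free homotopy class of $\alpha$ as an element of the target, I can choose a lift $\eta \colon x \to W$ with $[\Phi(\eta)]$ equal to the free class of $\alpha$. On the other hand, the totally constant cone $\lambda_y \colon K^{\triangleleft} \to \C$ at $y$ (all legs $\id_y$) is a cone on $d$ with summit $y$; weak terminality of $\omega$ of order $0$ in $\C_{/d}$ yields a morphism $\lambda_y \to \omega$, that is, a map $c \colon y \to W$ with $\omega \circ c \simeq \lambda_y$, whence $\omega_v \circ c \simeq \id_y$ coherently in $v$. Setting $\eta_0 \colon= c \circ f$ gives $\Phi(\eta_0)=[\,v \mapsto (\omega_v \circ c)\circ f\,]$, which is the constant sphere at $f$.

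Now I would invoke $h$-initiality once more, for the object $W$: the space $\map_{\C}(x,W)$ is connected, so $\eta \simeq \eta_0$, and applying $\Phi$ gives $[\Phi(\eta)]=[\Phi(\eta_0)]$ in $\pi_0\map_{\S}(S^t,\map_{\C}(x,y))$. The left side is the free class of $\alpha$ and the right side is the class of the constant sphere, so $\alpha$ is freely null-homotopic in $\map_{\C}(x,y)$. Since $\map_{\C}(x,y)$ is connected, a freely null-homotopic class lies in the $\pi_1$-orbit of $0 \in \pi_t$, which is $\{0\}$; hence $\alpha=0$ (for $t=1$ this is the statement that the trivial conjugacy class in $\pi_1$ consists only of the identity). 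This yields $\pi_t(\map_{\C}(x,y),f)=0$ in the required range, and combined with connectivity and $(n-1)$-truncation it shows $\map_{\C}(x,y)$ is contractible, so $x$ is initial.

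The delicate points, rather than a single hard obstacle, are organisational. First, one must take the weak limit of the \emph{constant} diagram on the sphere $\partial\Delta^{t+1}$, so that its dimension $t$ stays within the permitted range $\leq n-1$ — this is precisely why the hypothesis is phrased for that range — and then produce the comparison map $c \colon y \to W$ from the constant cone via weak terminality and check that $\Phi(\eta_0)$ is the constant sphere. Second, the only genuinely subtle step is upgrading a \emph{free} null-homotopy to the vanishing of the \emph{based} class $\alpha$, which is handled by the $\pi_1$-orbit argument and uses connectedness of $\map_{\C}(x,y)$ essentially. It is worth noting that the full strength of higher weak initiality is not needed: only $h$-initiality (order $1$) enters, through connectedness of $\map_{\C}(x,W)$ and of $\map_{\C}(x,y)$, together with the order-$0$ weak limits and the truncation of an $n$-category.
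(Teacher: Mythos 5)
Your proof is correct and takes essentially the same route as the paper: both realize the spheres $S^t$, $t \leq n-1$, as order-$0$ weak limits of constant diagrams on $\partial\Delta^{t+1}$ (the weak cotensors $y^{S^t_w}$) and use $h$-initiality to collapse $\pi_0\map_{\C}(x, y^{S^t_w})$ to a point, forcing $\pi_0\map_{\S}(S^t, \map_{\C}(x,y))$ to be a singleton. The only difference is expository: the paper stops at ``$\map_{\S}(S^k,\map_{\C}(x,y))$ is connected for $0 \leq k < n$ suffices since $\C$ is an $n$-category,'' while you explicitly construct the comparison cone $c \colon y \to W$ and carry out the free-to-based upgrade via the $\pi_1$-orbit argument, details the paper leaves implicit.
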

\begin{proof}
An initial object is obviously $h$-initial. For the converse, let $x \in \C$ be an $h$-initial object and let $y \in \C$ be an arbitrary object. We are required to prove that the mapping space $\map_{\C}(x,y)$ is contractible.  Since $\C$ is an $n$-category, it is enough to show that $\map_{\C}(x,y)^{S^k}$ is connected for $0 \leq k < n$. By assumption, the objects of $\C$ admit weak cotensors with $S^k$ (of order $0$) for all $0 \leq k <n$. (See \cite[4.4.4]{HTT} for the dual notion of tensoring with a space defined as an example of a colimit.) In other words, using Proposition \ref{characterization_weak_colimit}, this means that for each such $k \leq n-1$, the constant $S^k$-diagram at $y \in \C$ admits a cone with cone object $y^{S^k_w}$, such that the following canonical map is surjective:
\[
\pi_0(\map_{\C}(x,y^{S^k_w})) \to \pi_0(\map_{\C}(x,y)^{S^k}).
\]
Since $x \in \C$ is $h$-initial, the domain of this surjective map is a singleton. This implies that the target must also be a singleton, which means that the space $\map_{\C}(x, y)^{S^k}$ is connected.
\end{proof}

\begin{example} \label{example_CriterionA}
Suppose that $\C$ is an $n$-category which admits finite products and weak pullbacks of order $(n-2)$. Then, by Proposition \ref{weakly_cocomplete_colimits}, for any finite simplicial set $K$ of dimension $d$, the $n$-category $\C$ admits weak $K$-limits of order $(n-1-d)$.
In particular, $\C$ satisfies the assumptions of Lemma \ref{criterionA}.
\end{example}

The comparison between the notions of initial and hypoinitial objects is explained further in the following proposition. We denote by $\N^{\op}$ the opposite category associated with the poset of non-negative integers with its usual ordering. 

\begin{proposition} \label{hypoinitial_vs_initial}
Let $\C$ be an $\infty$-category.  
\begin{enumerate}
\item An object $x \in \C$ is initial if and only if there is a cone $C_{x} \colon \C^{\triangleleft} \to \C$ on the identity $\mathrm{id} \colon \C \to \C$ with cone object $x \in \C$, such that the component at $x$, that is, the associated morphism in $\C$,
$$C_{x}(x) \colon x \to x,$$
is an equivalence. Moreover, in this case, the cone $C_{x}$ is a limit of the identity $\mathrm{id} \colon \C \to \C$. 
\item Suppose that $x \in \C$ is hypoinitial and let $C_{x} \colon \C^{\triangleleft} \to \C$ be a cone on the identity $\mathrm{id} \colon \C \to \C$ with cone object $x$. Then the component of $C_{x}$ at $x \in \C$, that is, the associated morphism
$$C_{x}(x) \colon x \to x,$$
is an idempotent in $\C$. 
\item Suppose that $\C$ admits weak $\N^{\op}$-limits of order $1$. Then $\C$ has a hypoinitial object if and only if $\C$ has an initial object. 
\end{enumerate}
\end{proposition}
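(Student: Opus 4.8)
The plan is to treat all three parts through the single device of regarding the cone $C_x$ as a natural transformation $\eta\colon \mathrm{const}_x\Rightarrow\mathrm{id}_\C$ in $\mathrm{Fun}(\C,\C)$, and tracking its component $e:=\eta_x\colon x\to x$.

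\emph{Part (1).} The direction ``initial $\Rightarrow$'' is immediate: if $x$ is initial then $\mathrm{id}_\C$ has a limit with cone object $x$, and the component of the limit cone at $x$ is the unique (hence identity) self-map of $x$, an equivalence. For the converse I would show that whenever $\eta_x$ is an equivalence the cone $\eta$ is already a limit of $\mathrm{id}_\C$; since a limit of the identity functor is an initial object, this proves both that $x$ is initial and the ``moreover'' clause. Concretely, for each $w\in\C$ write $\mathrm{Cone}(w):=\map_{\mathrm{Fun}(\C,\C)}(\mathrm{const}_w,\mathrm{id}_\C)$ and consider
\[
\Phi_w\colon \map_\C(w,x)\to \mathrm{Cone}(w),\quad g\mapsto \eta\circ\mathrm{const}_g,\qquad \Psi\colon \mathrm{Cone}(w)\to\map_\C(w,x),\quad \theta\mapsto\theta_x.
\]
Then $\Psi\Phi_w(g)\simeq\eta_x\circ g$, which is an equivalence because $\eta_x$ is; and $\Phi_w\Psi(\theta)$ has $y$-component $\eta_y\circ\theta_x$, which is equivalent to $\theta_y$ by naturality of $\theta$ applied to $\eta_y\colon x\to y$, so $\Phi_w\Psi\simeq\mathrm{id}$. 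Hence $\Phi_w$ is an equivalence for every $w$, i.e. $\eta$ exhibits $x$ as $\lim\mathrm{id}_\C$.

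\emph{Part (2).} I would produce a coherent idempotent out of the cone. The underlying relation $e\circ e\simeq e$ is witnessed by the $2$-simplex $\{v\}\star\bar e$ of $\C^{\triangleleft}=\{v\}\star\C$, where $\bar e$ is the edge $\eta_x$ of $\C$ and $v$ is the cone point, pushed forward along $C_x$. To upgrade this to a genuine idempotent $\Idem\to\C$ I would use that the constant functor $\mathrm{const}_x$ is canonically an idempotent object of $(\mathrm{End}(\C),\circ)$ --- one has $\mathrm{const}_x\circ\mathrm{const}_x=\mathrm{const}_x$ coherently --- and that $\eta\colon\mathrm{const}_x\Rightarrow\mathrm{id}_\C$ is a map to the unit; evaluating the resulting coherent structure at $x$ yields $e$ together with all higher coherences. \textbf{This is the main technical obstacle}: passing from the evident homotopy $e^2\simeq e$ to a fully coherent idempotent, which is exactly the kind of upgrade that can fail for arbitrary self-maps, so the argument must genuinely use the cone $C_x$ (equivalently, the naturality of $\eta$) rather than only its component $e$.

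\emph{Part (3).} The implication ``initial $\Rightarrow$ hypoinitial'' is clear. For the converse, using Part (2) I would form the tower $T\colon\N^{\op}\to\C$ that is constant at $x$ with all connecting maps $e$ (this functor exists precisely because $e$ is a coherent idempotent), and let $z$ be a weak $\N^{\op}$-limit of $T$ of order $1$, with projections $\pi_k\colon z\to x$ satisfying $e\circ\pi_{k+1}\simeq\pi_k$. The vertical composite of $\eta$ with $\mathrm{const}_{\pi_0}$ gives a cone $D\colon\mathrm{const}_z\Rightarrow\mathrm{id}_\C$ on the identity, with $D_y=\eta_y\circ\pi_0$ and self-component $D_z=\eta_z\circ\pi_0$. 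By Part (1) it suffices to check that $D_z$ is an equivalence, and I claim in fact $D_z\simeq\mathrm{id}_z$. To see this, apply the characterization of weak limits (Proposition \ref{characterization_weak_colimit}(c)): the comparison map $\rho\colon\map_\C(z,z)\to\map_{\C^{\N^{\op}}}(\mathrm{const}_z,T)$ sending $g\mapsto(\pi_k\circ g)_k$ is $1$-connected, hence injective on $\pi_0$, so it is enough to identify the two cones $\rho(D_z)$ and $\rho(\mathrm{id}_z)=(\pi_k)_k$. Here I would use that $e$ is idempotent to see that the target $\map_{\C^{\N^{\op}}}(\mathrm{const}_z,T)$ is the splitting of the idempotent $e_*=e\circ(-)$ on $\map_\C(z,x)$, so that a cone is detected up to equivalence by its stage-$0$ component; since $\pi_k\circ\eta_z\simeq e$ (naturality of $\eta$ applied to $\pi_k$) gives $\pi_k\circ D_z\simeq e\circ\pi_0\simeq\pi_0$, both $\rho(D_z)$ and $\rho(\mathrm{id}_z)$ have stage-$0$ component $\simeq\pi_0$, whence $\rho(D_z)\simeq\rho(\mathrm{id}_z)$ and $D_z\simeq\mathrm{id}_z$. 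The delicate point in this part is to run these identifications at the level of cones (not merely components); the idempotency of $e_*$ is exactly what trivializes the potential $\lim^1$-obstruction and makes the order-$1$ hypothesis sufficient.
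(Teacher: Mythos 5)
Your part (3) is essentially right, but parts (1) and (2) each have a hole, and the one in (2) is the serious one. There you correctly identify the crux -- upgrading the homotopy $e^2\simeq e$ to a functor $\Idem\to\C$ -- but you do not cross it: the assertion that $\mathrm{const}_x$ is ``canonically an idempotent object of $\mathrm{End}(\C)$'' and that ``evaluating the resulting coherent structure at $x$ yields $e$ together with all higher coherences'' is exactly the statement to be proved, and no mechanism is given for producing the infinite tower of coherences (note that $e$ is the whiskering $\eta\ast\mathrm{const}_x$, an \emph{endo-transformation} of $\mathrm{const}_x$, so the strict identity $\mathrm{const}_x\circ\mathrm{const}_x=\mathrm{const}_x$ does not by itself hand you an $\Idem$-diagram; and incoherent idempotents genuinely exist, so some construction is unavoidable). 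The paper supplies the missing mechanism by a direct induction over the skeleta of $\Idem$, using the join structure of the cone: having defined $F^{(n-1)}$ on $\sk_{n-1}(\Idem)$, it sends the unique nondegenerate $n$-simplex $\sigma_n$ to the composite $\Delta^n\cong\Delta^0\ast\Delta^{n-1}\xrightarrow{\id\,\ast\,F^{(n-1)}(\sigma_{n-1})}\Delta^0\ast\C\xrightarrow{C_x}\C$, and checks that all faces match $F^{(n-1)}(\sigma_{n-1})$. This is the precise sense in which ``the argument must genuinely use the cone $C_x$,'' and your write-up needs it (or an actual proof of your idempotent-object claim) to be complete.

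In part (1), the paper simply cites Joyal and Riehl--Verity, so reproving it is fine in principle, but your converse has a coherence gap: you verify $\Phi_w\Psi(\theta)\simeq\theta$ only pointwise in $\theta$ (and componentwise in $y$). Pointwise homotopies, together with $\Psi\Phi_w\simeq(\eta_x)_*$, give bijectivity on $\pi_0$ but say nothing about the action of $\Phi_w\Psi$ on higher homotopy groups at a basepoint $\theta$, so they do not show $\Phi_w$ is an equivalence of spaces. The repair is standard but must be said: realize the homotopy naturally via whiskering and the interchange law in $\mathrm{Fun}(\C,\C)$ -- the horizontal composite $\theta\ast\eta$ admits the two factorizations $\theta\circ(\mathrm{const}_w\ast\eta)\simeq\theta$ and $\eta\circ(\theta\ast\mathrm{const}_x)=\Phi_w\Psi(\theta)$, both functorial in $\theta$ -- which yields a homotopy $\Phi_w\Psi\simeq\id$ of maps of spaces.

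Granting (2), your part (3) is correct and is a genuine (mild) streamlining of the paper's argument. The paper builds an auxiliary cone ${}^xE$ with cone object $x$, proves it is a weak limit of order $0$ (via the idempotent retraction on $\map_{\C}(z,x)$), constructs cone morphisms with underlying maps $r\colon x\to y$ and $i\colon y\to x$, and deduces $[r]\circ[i]=\id_y$ from the order-$1$ property; you instead compare $\rho(D_z)$ with $\rho(\id_z)$ directly, using $\pi_0$-injectivity of $\rho$ (the order-$1$ hypothesis) together with the same key identification -- that idempotency of $e_*$ exhibits the cone space as the retract splitting of $\map_{\C}(z,x)$, detected by the stage-$0$ component -- which the paper also invokes at the corresponding step. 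Two small remarks: defining your tower $T\colon\N^{\op}\to\C$ indeed requires the coherent idempotent from (2) (the paper sidesteps even this by working over the $1$-dimensional spine, where no coherences are needed to write down the diagram), and your closing observation that idempotency is what kills the potential $\lim^1$-issue matches the paper's use of the retraction identification.
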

\begin{proof}
(1) The first claim follows from \cite[Proposition 4.2]{Jo} or \cite[Lemma 4.2.3]{RV} (see also \cite[Proposition 2.1.1]{NRS}). The second claim is shown in \cite[Proposition 2.1.1]{NRS}. 

\smallskip

(2): Let $\Idem$ denote the nerve of the category with one object $X$ and one (non-identity) idempotent morphism $e \colon X \to X$ as defined in \cite[4.4.5]{HTT}. We are required to construct a functor $\Idem \to \C$ which sends $X$ to $x \in \C$ and $e$ to the morphism $C_{x}(x)$. This functor will be constructed inductively on the skeletal filtration of $\Idem$. The simplicial set $\Idem$ contains exactly one non-degenerate simplex $\sigma_n$ in each simplicial degree $n \geq 0$; $\sigma_n$ corresponds to the string of $n$ copies of $e$. Note that each of the faces of $\sigma_n$, $n \geq 1$, is given by the $(n-1)$-simplex $\sigma_{n-1}$. 

First we define a diagram $F^{(1)} \colon \sk_1(\Idem) \to \C$ that sends $\sigma_0$ (i.e., the object $X$) to $x \in \C$ and $\sigma_1$ (i.e., the morphism $e$) to $C_{x}(x)$. For $n > 1$, assuming that the diagram $F^{(n-1)} \colon \sk_{n-1}(\Idem) \to \C$ has already been constructed, we define an extension 
$$F^{(n)} \colon \sk_n(\Idem) \to \C$$
by sending $\sigma_n$ to the following $n$-simplex of $\C$:  
$$F^{(n)}(\sigma_n) \colon \Delta^n \cong \Delta^0 \ast \Delta^{n-1} \xrightarrow{\mathrm{id} \ast F^{(n-1)}(\sigma_{n-1})} \Delta^0 \ast \C \xrightarrow{C_x} \C.$$ 
By the definition of this inductive process, we observe that the faces of $F^{(n)}(\sigma_n)$ are given by $F^{(n-1)}(\sigma_{n-1})$, therefore $F^{(n)}$ is well-defined. This completes the inductive construction of $F^{(n)} \colon \sk_n(\Idem) \to \C$ for all $n \geq 0$. Passing to the colimit of the skeletal filtration of $\Idem$, we obtain the required functor $F \colon \Idem \to \C$.  

\smallskip

(3): It follows from (1) that every initial object is hypoinitial. Conversely, let $x \in \C$ be a hypoinitial object and let $C_x \colon \C^{\triangleleft} \to \C$ be a cone on the identity $\id \colon \C \to \C$ with cone object $x$. We denote by $e \colon x \to x$ the component of $C_x$ 
at $x \in \C$. 

Let $\mathrm{Spine}_{\infty}$ denote the infinite spine
\[
\mathrm{Spine}_{\infty}:=\Delta^1 \amalg_{\Delta^0} \Delta^1 \amalg_{\Delta^0}...,
\]
and note that this is Joyal equivalent to $\N$. Using our assumptions on $\C$, the $\mathrm{Spine}_{\infty}^{\op}$-diagram in $\C$,
\[
E \colon \begin{tikzcd} \cdots \arrow[r,"e"] & x \arrow[r,"e"] & x \arrow[r,"e"] & x,
\end{tikzcd}
\]
admits a weak limit of order $1$, denoted by
\[
{}^y E \colon (\mathrm{Spine}_{\infty}^{\op})^{\triangleleft} \to \C,
\]
where $y \in \C$ is the cone object. We will show that $y \in \C$ is an initial object using the characterization in (1). The $0$-th/bottom component of the cone ${}^y E$ is a morphism
\[
i \colon y \to x
\]
(Since $e$ is idempotent by (2), it is easy to show that any other component of ${}^y E$ is canonically identified with $i$. In particular, $[i] = [e] \circ [i]$ in $\h(\C)$.) We may postcompose $i$ with the cone $C_x$ to obtain a new cone on the identity $\id \colon \C \to \C$, denoted by 
\[
C_{y} \colon \C^{\triangleleft} \to \C,
\]
whose cone object is $y \in \C$. In particular, $y$ is hypoinitial. Therefore, by (1), it suffices to show that the component $C_y(y) \colon y \to y$ of the cone $C_y$ at $y$ is an equivalence. Let $C_y(x) \colon y \to x$ denote the component of $C_y$ at $x$. It follows by direct inspection that $[C_y(x)] = [e] \circ [i]$, as a consequence of the definition of $C_y$. But $[e] \circ [i] = [i]$, so $[C_y(x)] = [i]$. 

Consider a new cone ${}^x E$ on $E$, defined by 
$${}^x E \colon \Delta^0 \ast \mathrm{Spine}_{\infty}^{\op} \xrightarrow{\id \ast E} \Delta^0 \ast \C \xrightarrow{C_x} \C$$
whose cone object is $x \in \C$. Moreover, the cone ${}^x E$ is a weak limit of $E$ of order $0$. To see this, we appeal to the characterization of Proposition 
\ref{characterization_weak_colimit} and claim that the canonical map (defined by the cone ${}^x E$) 
\begin{equation} \label{canonical_map_in_proof} \tag{*}
\map_{\C}(z, x) \rightarrow \mathrm{lim}_{\mathrm{Spine}_{\infty}^{\op}} \map_{\C}(z, x)
\end{equation}
is $0$-connected for any $z \in \C$. This holds because the canonical map \eqref{canonical_map_in_proof} can be identified up to homotopy with the retraction associated with the idempotent on the space $\map_{\C}(z, x)$ which is defined by composition with $e$ (see \cite[4.4.5.10--4.4.5.15]{HTT} for more details). 

There is a morphism (unique up to homotopy) of cones ${}^x E \to {}^y E$ defined by the following diagram:
$$\Delta^0 \ast (\Delta^0 \ast \mathrm{Spine}_{\infty}^{\op}) \xrightarrow{\id \ast {}^y E} \Delta^0 \ast \C \xrightarrow{C_x} \C.$$
Let $r \colon x \to y$ denote the associated morphism between cone objects. (By construction, this is just $C_x(y)$. In particular, we have $[e] = [i] \circ [r]$.) 

There is also a morphism of cones ${}^y E \to {}^x E$, since ${}^x E$ is a weak limit of order $0$. This morphism may not be unique, but we may choose this so that the associated morphism of cone objects is $i \colon y \to x$. To see this, note that precomposition of ${}^x E$ with $i$ defines a cone ${}^yE'$
with cone object $y$, as well as a morphism of cones ${}^y E' \to {}^x E$. In order to identify ${}^y E'$ and ${}^y E$, we observe that the space of cones on $E$ with cone object $y$ is given by the retract
$$\mathrm{lim}_{\mathrm{Spine}_{\infty}^{\op}} \map_{\C}(y, x) \subseteq \map_{\C}(y, x).$$
Thus, two such cones are equivalent if and only if their components are homotopic. This is obviously satisfied by the pair of cones ${}^y E'$ and ${}^y E$.

The composite morphism ${}^y E \to {}^x E \to {}^y E$ 
is (non-canonically) equivalent to the identity morphism, since ${}^y E$ is a weak limit of order $1$. In particular, $[r] \circ [i] = \id_y$.  

Combining these observations, we conclude the following identification of $C_y(y)$:
$$[C_y(y)] = [r] \circ [C_y(x)] = [r] \circ [i] = \id_y.$$
In particular, $C_y(y)$ is an equivalence, and therefore $y$ is initial in $\C$ by the characterization in (1).  
\end{proof}

\begin{example}
Example \ref{initial_example3} shows that the additional assumption in Proposition \ref{hypoinitial_vs_initial}(3) is required. 
\end{example}

The following statement provides conditions under which a weakly initial set implies the existence of a hypoinitial object. Moreover, combined with Proposition \ref{hypoinitial_vs_initial}, it gives our second criterion for the existence of initial objects. 

\begin{lemma}[Criterion B] \label{criterionB} 
Let $\C$ be a locally small $n$-category, where $n \geq 1$ is an integer or $n=\infty$. Suppose that $\C$ has small products and weak pullbacks of order $(n-2)$. Then the following statements hold:
\begin{enumerate}
\item $\C$ has a small weakly initial set if and only if $\C$ has a hypoinitial object. 
\item Suppose that $\C$ has weak $\N^{\op}$-limits of order $1$. Then $\C$ has a small weakly initial set if and only if $\C$ has an initial object. 
\end{enumerate}
\end{lemma}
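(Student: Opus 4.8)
The plan is to deduce part (2) from part (1) together with Proposition~\ref{hypoinitial_vs_initial}, and to concentrate the real work on the nontrivial implication of part (1). For part (2): an initial object is weakly initial, so $\{0\}$ is a small weakly initial set; conversely, a small weakly initial set yields a hypoinitial object by part (1), and the additional hypothesis of weak $\N^{\op}$-limits of order $1$ upgrades this to an initial object by Proposition~\ref{hypoinitial_vs_initial}(3). Likewise, in part (1) the implication ``hypoinitial $\Rightarrow$ small weakly initial set'' is immediate, since a hypoinitial object is weakly initial and hence spans a singleton weakly initial set. Thus everything reduces to producing a hypoinitial object out of a small weakly initial set $S$.

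First I would replace $S$ by a single object. Using that $\C$ has small products, set $w := \prod_{s \in S} s$. For every $y \in \C$ there is some $s \in S$ with $\map_\C(s,y) \neq \varnothing$, and precomposition with the projection $w \to s$ shows $\map_\C(w,y) \neq \varnothing$; hence $w$ is weakly initial of order $0$. As Examples~\ref{initial_example2} and \ref{initial_example3} show, weak initiality of order $0$ is far from being hypoinitiality, so a genuine rigidification step is needed. Following the strategy of Freyd's theorem, I would form a weak joint equalizer $E$ of the endomorphisms of $w$: local smallness guarantees that $\map_\C(w,w)$ is a small space, and I would build $E \to w$ as a weak pullback of order $(n-2)$ of the diagram $w \xrightarrow{\Delta} \prod w \xleftarrow{(f)} w$ indexed by the endomorphisms of $w$, which exists by the hypotheses (small products together with weak pullbacks of order $(n-2)$; compare Example~\ref{example_CriterionA}). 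The object $E$ again admits a map to every $y \in \C$ through $E \to w \to y$, so it remains weakly initial of order $0$.

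The heart of the argument, and the step I expect to be the main obstacle, is to promote $E$ to a hypoinitial object by exhibiting a cone $C_E \colon \C^{\triangleleft} \to \C$ on the identity $\id_\C$ with cone object $E$. I would construct this cone by extending it over the skeleta of $\C^{\triangleleft}$, equivalently by solving successive lifting problems against the left fibration $\C_{E/} \to \C$ in the spirit of Proposition~\ref{characterization_weakly_initial/terminal}: weak initiality of $E$ supplies the components $E \to y$, while the weak-equalizer property encodes that any two comparisons built from $E \to w$ differ by an endomorphism of $w$ that $E$ absorbs, thereby providing the homotopies that witness naturality of the cone and its higher coherences. The bookkeeping is exactly calibrated so that this process closes up: since $\C$ is an $n$-category its mapping spaces are $(n-1)$-truncated, and the weak pullbacks of order $(n-2)$ furnish precisely the coherence data below that truncation level, so no further obstructions survive. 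I would emphasize that this construction produces only a hypoinitial object and not an initial one --- consistent with the fact that the stronger conclusion in part (2) genuinely requires the extra weak $\N^{\op}$-limits of order $1$ --- and that the delicate points throughout are the homotopy-coherent assembly of the cone and the care needed in indexing the equalizer over the (truncated) space $\map_\C(w,w)$ rather than a mere set of endomorphisms.
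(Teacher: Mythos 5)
Your framing is right where it is easy: deducing part (2) from part (1) plus Proposition~\ref{hypoinitial_vs_initial}(3) is exactly what the paper does, and both easy implications are as you say. The gap is the entire core of part (1). Your Freyd-style object $E$ carries no usable universal property, and the third paragraph of your proposal, where the cone on $\id_{\C}$ is ``assembled by skeletal induction,'' is an assertion, not an argument -- and the suggested mechanism fails. Test it at $n=1$, where ``weak pullbacks of order $(n-2)$'' means order $-1$, i.e.\ bare cones: there a cone on the identity with vertex $E$ amounts to a \emph{single} endomorphism $e_0$ of $E$ satisfying $g e_0 = g' e_0$ for \emph{every} parallel pair $g, g' \colon E \to y$ simultaneously. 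Your absorption property produces, for each individual pair, a separate absorbing endomorphism $h_{g,g'}$ (factor through a weak equalizer, map $w$ in, compose with $E \to w$), but upgrading these to one uniform $e_0$ is precisely the uniqueness clause of a genuine limit -- the step in Freyd's proof where $a \circ h = a$ forces $h = \id_E$ -- and weak limits of order $(n-2)$ (indeed of any order) do not supply it. Your truncation bookkeeping is also off by one: weak limits of order $(n-2)$ give $(n-2)$-connected comparison maps, which control homotopy only through degree $n-2$, leaving the top degree $\pi_{n-1}$ of the $(n-1)$-truncated mapping spaces, as well as the critical filling dimension $n+1$, uncontrolled. Worse, by Lemma~\ref{characterization_n-connected_left/right}, solving \emph{arbitrary} lifting problems against the left fibration $\C_{E/} \to \C$ in dimensions up to $n+1$ would require $\map_{\C}(E,y)$ to be $n$-connected for all $y$, i.e.\ $E$ to be initial already -- so the fiberwise obstruction-theoretic induction you describe cannot close on connectivity grounds and a genuinely different global mechanism is needed.

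The paper's proof supplies that mechanism and uses neither the product $w$ nor an equalizer. It takes the full subcategory $\C_S$ spanned by the weakly initial set, proves the inclusion $\iota \colon \C_S \to \C$ is coinitial by showing each $\iota_{/x}$ is weakly contractible -- with three regimes: maps from $\partial\Delta^m$ for $m \leq n$ handled by cones on diagrams of dimension $\leq n$ (from products and weak pullbacks of order $(n-2)$) together with weak initiality of $S$; the critical case $m = n+1$ handled by the necklace/rigidification Lemma~\ref{technical_lemma}, which shows $\Delta^0 \ast X \cup_X X \ast \Delta^0 \to \Delta^0 \ast X \ast \Delta^0$ is a categorical $n$-equivalence for $(n-1)$-connected $X$; and $m \geq n+2$ by unique fillers in an $n$-category. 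It then builds a cone on $\iota$ over the $n$-skeleton of $\C_S$, extends uniquely across higher simplices, and transports along the equivalence $\C_{/\id} \simeq \C_{/\iota}$ to get the cone on the identity. Note finally that your equalizer step is a red herring: granting the lemma, any weakly initial object of order $0$ -- already $w$ itself -- maps to the resulting hypoinitial object and is therefore itself hypoinitial by precomposing the cone (cf.\ Example~\ref{initial_example1}), so passing from $w$ to $E$ buys nothing; the whole content of Criterion B is the construction of the cone, which your sketch does not carry out.
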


For the proof of Lemma \ref{criterionB}, we will need the following technical fact which is a generalization of \cite[Lemma 5.4.5.10]{HTT}. First we recall some terminology. We say that a functor $F \colon \C \to \D$ between $\infty$-categories 
is a (categorical) $n$-\emph{equivalence} if the induced functor $\h_n(F) \colon \h_n \C \to \h_n \D$ is an equivalence of $n$-categories. More generally, 
we say that a map $f \colon X \to Y$ of simplicial sets is a \emph{categorical n-equivalence} if it induces an $n$-equivalence between the associated $\infty$-categories, 
that is, after fibrant replacement in the Joyal model category. The class of $n$-equivalences of simplicial sets defines a left Bousfield localization of the 
Joyal model category whose fibrant objects are the $\infty$-categories which are equivalent to an $n$-category (see \cite{CL}).

\begin{lemma} \label{technical_lemma}
Let $X$ be an $(n-1)$-connected simplicial set, $n \geq 1$. Then the canonical inclusion map 
$$j \colon \Delta^0 \ast X \cup_{X} X \ast \Delta^0 \to \Delta^0 \ast X \ast \Delta^0$$
is a categorical $n$-equivalence. 
\end{lemma}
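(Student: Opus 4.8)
The plan is to pass to rigidifications and reduce the statement to a single mapping-space computation. Write $P := \Delta^0\ast X\cup_X X\ast\Delta^0$ and let $a$ and $b$ denote the two cone points (the initial vertex of $\Delta^0 \ast X$ and the terminal vertex of $X \ast \Delta^0$). Since the rigidification functor $\CC[-]$ preserves pushouts and is the identity on vertex sets, the induced functor $\CC[j]\colon\CC[P]\to\CC[\Delta^0\ast X\ast\Delta^0]$ is a bijection on objects. Because a categorical $n$-equivalence is by definition one inducing an equivalence of homotopy $n$-categories, and the homotopy $n$-category truncates all mapping spaces to $(n-1)$-truncated spaces, it suffices to show that $\CC[j]$ induces an equivalence on every mapping space after applying $\tau_{\leq n-1}$.

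First I would isolate the only mapping space that is affected. The simplices of $\Delta^0\ast X\ast\Delta^0$ not lying in $P$ are exactly those containing both $a$ and $b$, i.e. the simplices $a\ast\tau\ast b$ for $\tau$ a simplex of $X$. Since $a$ is an initial and $b$ a terminal vertex, no such simplex can appear as an interior bead of a necklace computing $\Map_{\CC[-]}(u,v)$ unless $(u,v)=(a,b)$; hence the map $\Map_{\CC[P]}(u,v)\to\Map_{\CC[\Delta^0\ast X\ast\Delta^0]}(u,v)$ is an isomorphism for every pair $(u,v)\neq(a,b)$. In the target, $a$ is initial (as the cone point of $\Delta^0\ast(X\ast\Delta^0)$), so $\Map_{\CC[\Delta^0\ast X\ast\Delta^0]}(a,b)\simeq\ast$. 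Thus the lemma reduces to showing that $\Map_{\CC[P]}(a,b)$ is $(n-1)$-connected whenever $X$ is.

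For this computation I would use that $\CC[P]=\CC[\Delta^0\ast X]\amalg_{\CC[X]}\CC[X\ast\Delta^0]$ is a homotopy pushout of simplicially enriched categories, the inclusion $\CC[X]\hookrightarrow\CC[\Delta^0\ast X]$ being a Bergner cofibration since $\CC$ is left Quillen. The mapping space from $a$ to $b$ in such a homotopy pushout is computed by the two-sided bar construction $B(\Map_{\CC[X\ast\Delta^0]}(-,b),\CC[X],\Map_{\CC[\Delta^0\ast X]}(a,-))$. As $a$ is initial in $\CC[\Delta^0\ast X]$ and $b$ is terminal in $\CC[X\ast\Delta^0]$, both variable factors are contractible, so the bar construction collapses to $B(\ast,\CC[X],\ast)=B\CC[X]$, the classifying space of $\CC[X]$. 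Since $B\CC[X]$ computes the localization of $X$ at all of its morphisms, it is naturally weakly equivalent to the geometric realization $|X|$, hence is $(n-1)$-connected exactly when $X$ is. Combined with the previous paragraph, this shows that $\CC[j]$ induces an equivalence on all $(n-1)$-truncated mapping spaces, whence an equivalence of homotopy $n$-categories; taking $n=\infty$ recovers the weakly contractible case of \cite[Lemma 5.4.5.10]{HTT}.

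The main obstacle is precisely the identification $\Map_{\CC[P]}(a,b)\simeq|X|$, i.e. controlling the mapping space of the homotopy pushout; the bar-construction description is the essential input. As a consistency check on this and on the connectivity bookkeeping, one can instead filter $\Delta^0\ast X\ast\Delta^0$ over $P$ by the dimension of $\tau$: attaching $a\ast\tau\ast b$ for a nondegenerate $k$-simplex $\tau$ is a pushout of $\partial\Delta^{k+2}\hookrightarrow\Delta^{k+2}$, since its entire boundary already lies in the previous stage, and this glues a single $(k+1)$-cell along a sphere $S^k$ onto $\Map(a,b)$. Thus $\Map_{\CC[\Delta^0\ast X\ast\Delta^0]}(a,b)\simeq\ast$ is obtained from $\Map_{\CC[P]}(a,b)$ by attaching the cells of the cone on $X$, which again identifies $\Map_{\CC[P]}(a,b)$ with $|X|$; the attachments coming from simplices of dimension $k\geq n$ are already individual categorical $n$-equivalences, and the $(n-1)$-connectivity of $X$ is exactly what absorbs the remaining low-dimensional cells.
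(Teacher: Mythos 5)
Your overall strategy is sound and genuinely different from the paper's, and the reductions you perform are correct. Both arguments come down to the same two facts: $\CC(j)$ is bijective on objects, and only the mapping space from the left cone point $a$ to the right cone point $b$ is affected, so everything hinges on identifying $\Map_{\CC[P]}(a,b)$ with (the weak homotopy type of) $X$. Your initial/terminal-vertex analysis of necklaces for pairs $(u,v)\neq(a,b)$ is correct and even gives isomorphisms of necklace categories, slightly sharper than the paper's identifications, which proceed by comparison with a Joyal fibrant replacement. Where you diverge is the key computation: the paper works entirely inside the Dugger--Spivak model, constructing an explicit pair of functors inducing a homotopy equivalence $N(\Nec\downarrow P_{a,b})\simeq N(\Nec'\downarrow X)$ and then applying Quillen's Theorem A to identify the latter with $X$; you instead decompose $\CC[P]$ as the homotopy pushout $\CC[\Delta^0\ast X]\amalg_{\CC[X]}\CC[X\ast\Delta^0]$ and identify $\Map_{\CC[P]}(a,b)$ with a two-sided bar construction that collapses to $B\CC[X]\simeq|X|$. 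Your route is conceptually cleaner and explains \emph{why} the answer is $|X|$, at the price of module-theoretic machinery that the paper's hands-on necklace computation avoids.

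The gap is precisely the step you flag as the essential input: the bar-construction formula for $\Map(a,b)$ in the pushout is asserted, not proved, and it does not follow formally from the pushout being a homotopy pushout -- mapping spaces in pushouts of simplicial categories are uncontrolled in general. To close it here you need three ingredients. First, since $X\to\Delta^0\ast X$ is a monomorphism, $\CC[X]\to\CC[\Delta^0\ast X]$ is a free extension whose generating morphisms all have source $a$ (dually for $b$); hence it is fully faithful on the old objects, and every word from $a$ to $b$ in the strict pushout crosses from the $\mathscr{A}$-side to the $\mathscr{B}$-side exactly once, which identifies $\Map_{\CC[P]}(a,b)$ with the \emph{strict} coend $\Map_{\CC[X\ast\Delta^0]}(-,b)\otimes_{\CC[X]}\Map_{\CC[\Delta^0\ast X]}(a,-)$. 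Second, the same free-extension structure exhibits $\Map_{\CC[\Delta^0\ast X]}(a,-)$ as a cellular, hence projectively cofibrant, $\CC[X]$-module, so the strict coend agrees with the derived one, i.e.\ with your bar construction; only then is the collapse $B(\ast,\CC[X],\ast)=B\CC[X]$ (via levelwise contractibility and the diagonal lemma for bisimplicial sets) legitimate. Third, the identification $B\CC[X]\simeq|X|$ itself needs justification, e.g.\ via the comparison of $\CC[X]$ with the weak homotopy type of $X$ from \cite{DS1, HTT}. None of these is false, and your argument is correct once they are supplied, but as written the central identification is a citation-free assertion. Finally, your closing filtration paragraph is fine as the consistency check you intend, but it cannot substitute for the main argument: knowing that attaching the cells of the cone on $X$ to $\Map_{\CC[P]}(a,b)$ yields a contractible space does not determine $\tau_{\leq n-1}\Map_{\CC[P]}(a,b)$ without tracking the attaching maps.
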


\smallskip

For the proof of Lemma \ref{technical_lemma}, we will make use of the model for mapping spaces which was introduced and developed in the work of Dugger--Spivak \cite{DS1, DS2}. We recall that a \emph{necklace} is a simplicial set of the form 
$$T = \Delta^{n_0} \vee \Delta^{n_1} \vee \cdots \vee \Delta^{n_k}$$
where $n_i \geq 0$ and where the final vertex of $\Delta^{n_i}$ is identified with the initial vertex of $\Delta^{n_{i+1}}$ for each $0 \leq i \leq k-1$.  Every necklace $T$ is equipped with a map $\partial \Delta^1 \to T$ which sends $0$ to the initial vertex $\alpha$ of the necklace $T$ and $1$ to the final vertex $\omega \in T$. The category 
$\Nec$ of necklaces is the full subcategory of $\SSet_{\ast, \ast} = (\partial \Delta^1 \downarrow \SSet)$ which is spanned by the necklaces. In other words, 
a map of necklaces is a map between the underlying simplicial sets that preserves both basepoints. 

Let $\SCat$ denote the category of simplicial categories (or simplicially enriched categories) and let  $\CC \colon \SSet \to \SCat$ denote the left adjoint of 
the coherent nerve functor $N_{\Delta}$ (see \cite[1.1.5]{HTT}, \cite{DS1}). By \cite[Theorem 5.3]{DS1}, given a simplicial set $X$ and $x, y \in X$, there is a canonical 
(zigzag of) weak equivalence(s) of simplicial sets:
\begin{equation} \label{DS_mapspace} \tag{*}
\Map_{\CC(X)}(x, y) \simeq N(\Nec \downarrow X_{x,y})
\end{equation}
where $(X_{x,y} \colon \partial \Delta^1 \to X) \in \SSet_{\ast, \ast}$ denotes the simplicial set $X$ with $x,y \in X$ as basepoints -- sending $0 \in \partial \Delta^1$ to $x$ and $1 \in \partial \Delta^1$ to $y$ -- and $N(\Nec \downarrow X_{x, y})$ is the usual nerve of the ordinary slice category of necklaces over the object $X_{x,y} \in \SSet_{\ast, \ast}$ (this is the same as $J_{/X_{x,y}}$ where $J \colon \Nec \to \SSet_{\ast, \ast}$ denotes the inclusion functor). 

Moreover, as a consequence of fundamental well-known properties of the adjunction $(\CC, N_{\Delta})$, the mapping space $\Map_{\CC(X)}(x, y)$ is canonically weakly equivalent to the mapping space from $x$ to $y$ in the $\infty$-category associated to $X$ (see \cite[2.2]{HTT}, \cite{DS2}).

\medskip

\noindent \emph{Proof of Lemma \ref{technical_lemma}}.
Consider the induced functor between the associated simplicial categories
$$\CC(j) \colon \MM = \CC(\Delta^0 \ast X \cup_X X \ast \Delta^0) \to \NN = \CC(\Delta^0 \ast X \ast \Delta^0)$$
where $\CC$ is the rigidification functor which is left adjoint to the coherent nerve functor $N_{\Delta}$. Based on well-known properties of the adjunction $(\CC, N_{\Delta})$ \cite{HTT, DS2}, the claim in the lemma is equivalent to the claim that the functor $\CC(j)$ is essentially surjective and induces $\pi_*$-equivalences on all mapping spaces for $* \leq (n-1)$ (and all basepoints). 

The functor $\CC(j)$ is bijective on objects. The set of objects consists of the left cone object $0$, the $0$-simplices $x \in X$, and the right cone object $1$. The mapping spaces of $\NN$ are easy to identify:
\begin{itemize}
\item[(i)] $\Map_{\NN}(0, x)$ is weakly contractible for any $x \in X$. 
\item[(ii)] $\Map_{\NN}(x, 1)$ is weakly contractible for any $x \in X$. 
\item[(iii)] For $x, y \in X$, $\Map_{\NN}(x, y)$ is equivalent to the mapping space in the $\infty$-category associated with $X$.
\item[(iv)] $\Map_{\NN}(0,1)$ is weakly contractible. 
\end{itemize}
 To see (i)--(iv), it suffices to identify these mapping spaces with the mapping spaces in the $\infty$-category $\Delta^0 \ast \C \ast \Delta^0$ where $\C$ denotes a fibrant replacement of $X$ in the Joyal model category; this also uses well-known properties of the adjunction $(\CC, N_{\Delta})$ (see \cite[2.2]{HTT}, \cite{DS2}). 

On the other hand, the mapping spaces of the simplicial category 
\begin{equation*} \label{pushout_M} 
\MM = \CC(\Delta^0 \ast X \cup_X X \ast \Delta^0) \cong \CC(\Delta^0 \ast X) \cup_{\CC(X)} \CC(X \ast \Delta^0)
\end{equation*}
are identified as follows:
\begin{itemize}
\item[(i)$'$] $\Map_{\MM}(0, x)$ is weakly contractible for any $x \in X$. 
\item[(ii)$'$] $\Map_{\MM}(x, 1)$ is weakly contractible for any $x \in X$. 
\item[(iii)$'$] For $x, y \in X$, $\Map_{\MM}(x, y)$ is equivalent to the mapping space in the $\infty$-category associated with $X$.
\end{itemize}
To see (i)$'$--(iii)$'$, it suffices to observe that these mapping spaces in the pushout $\MM$ agree with the corresponding mapping spaces in $\CC(\Delta^0 \ast X)$, $\CC(X)$, or $\CC(X \ast \Delta^0)$, and the latter mapping spaces can be identified similarly to (i)--(iv) above. Moreover, it is easy to see from the identifications in (iii) and (iii)$'$ that $\CC(j)$ induces weak equivalences on mapping spaces $\Map_{\MM}(x, y) \simeq \Map_{\NN}(x,y)$ for all $x, y \in X$.  

The situation is different for the mapping space $\Map_{\MM}(0,1)$, which is not weakly contractible in general. In order to identify this mapping space, we will make use of the model via necklaces \eqref{DS_mapspace} from \cite{DS1}. Let us write $\widetilde{X}_{0,1}$ for $\Delta^0 \ast X \cup_X X \ast \Delta^0$ equipped with the two basepoints given by the cone objects. According to \cite[Theorem 5.3]{DS1}, the mapping space $\Map_{\MM}(0,1)$ is weakly equivalent to 
$$N(\Nec \downarrow \widetilde{X}_{0,1}).$$
Let $\Nec'$ denote the full subcategory of $\SSet$ which is spanned by necklaces (after forgetting the basepoints). 
There is a functor $\psi \colon (\Nec \downarrow \widetilde{X}_{0,1}) \to (\Nec' \downarrow X)$ which sends $(T, b \colon T \to \widetilde{X}_{0,1})$ to $(T_X = b^{-1}(X), \ b_{|T_X} \colon T_X \to X)$ -- note that $T_X$ is a full connected simplicial subset of $T$ and therefore again a necklace. We claim that the induced map
$$N(\psi) \colon  N(\Nec \downarrow \widetilde{X}_{0,1}) \to N(\Nec' \downarrow X)$$ 
is a weak equivalence of simplicial sets. By Quillen's Theorem A (see, for example, \cite[Corollary 4.1.3.3]{HTT}), it suffices to prove that for every $(S, q \colon S \to X) \in (\Nec' \downarrow X)$, the slice category $\psi_{/(S, q)}$ has weakly contractible nerve. This slice category can be identified more explicitly as follows. Let us write $\widetilde{S}_{0,1} = \Delta^0 \ast S \cup_S S \ast \Delta^0$ for brevity (with basepoints given by the two cone objects). Then we consider the following functors:

\medskip

\noindent \emph{The functor} $\nu \colon \psi_{/(S, q)} \longrightarrow (\Nec \downarrow \widetilde{S}_{0,1})$. An object in $\psi_{/(S, q)}$ is given by an object $(T , b \colon T \to \widetilde{X}_{0,1})$ in $(\Nec \downarrow \widetilde{X}_{0,1})$ together with a morphism in $(\Nec' \downarrow X)$:
$$
\xymatrix{ 
T_X = b^{-1}(X) \ar[rr]^(0.6){u} \ar[dr]_{b_{|T_X}} && S \ar[dl]^q \\
& X. & \\
}
$$
The functor $\nu$ sends an object $(T, b; u)$ to the object $(T, c \colon T \to \widetilde{S}_{0,1})$ in $(\Nec \downarrow \widetilde{S}_{0,1})$ defined by
$$c \colon T \to \Delta^0 \ast T_X \cup_{T_X} T_X \ast \Delta^0 \xrightarrow{\id \ast u \cup u \ast \id} \Delta^0 \ast S \cup_S S \ast \Delta^0 = \widetilde{S}_{0,1}$$
where the first map sends $b^{-1}(0)$ to $0$, $b^{-1}(1)$ to $1$, and it is the canonical inclusion on $T_X $. (It is helpful to recall here that the join functor $\ast \colon \SSet_{/\partial \Delta^1} \to \SSet_{/\Delta^1}$ is the right adjoint of the pullback functor $i^* \colon \SSet_{/\Delta^1} \to \SSet_{/\partial \Delta^1}$, where $i \colon \partial \Delta^1 \to \Delta^1$ is the boundary inclusion.)

\medskip

\noindent \emph{The functor} $\mu \colon (\Nec \downarrow \widetilde{S}_{0,1}) \longrightarrow \psi_{/(S,q)}$. This sends an object $(T, c \colon T \to \widetilde{S}_{0,1})$ to the object $(T, b; u)$ which consists of
$$(T, \ b \colon T \xrightarrow{c} \widetilde{S}_{0,1} \xrightarrow{\id \ast q \cup q \ast \id} \widetilde{X}_{0,1}) \in (\Nec \downarrow \widetilde{X}_{0,1})$$
and the morphism in $(\Nec' \downarrow X)$: 
$$
\xymatrix{
T_X = c^{-1}(S) \ar[dr]_{b_{|T_X}} \ar[rr]^(0.6){c_{|c^{-1}(S)}} && S \ar[dl]^q \\
& X. & \\
}
$$
It is easy to see that the functors $\nu$ and $\mu$ are inverse equivalences of categories. As a consequence, $N(\psi_{/(S,q)}) \simeq N(\Nec \downarrow \widetilde{S}_{0,1})$. The simplicial set $N(\Nec \downarrow \widetilde{S}_{0,1})$ models the mapping space from $0$ to $1$ in (the $\infty$-category associated to) $\Delta^0 \ast S \cup_S S \ast \Delta^0$. Using \cite[Lemma 5.4.5.10]{HTT}, this mapping space is weakly contractible since the necklace $S$ is weakly contractible. It follows that $N(\psi_{/(S,q)})$ is weakly contractible for every $(S, q) \in (\Nec' \downarrow X)$, and therefore, $N(\psi) \colon  N(\Nec \downarrow \widetilde{X}_{0,1}) \to N(\Nec' \downarrow X)$ is a weak equivalence, as claimed. 

Lastly, it remains to identify $N(\Nec' \downarrow X)$. There is an inclusion functor $$\tau \colon (\Delta \downarrow X) \to (\Nec' \downarrow X)$$ of those necklaces in $X$ of the form $\Delta^k \to X$. We claim that $\tau$ induces a weak equivalence of simplicial sets after passing to the nerves. By Quillen's Theorem A (see, for example, \cite[Corollary 4.1.3.3]{HTT}), it suffices to prove that the slice category $\tau_{/(T, b)}$ has weakly contractible nerve for every object $(T, b \colon T \to X)$ in $(\Nec' \downarrow X)$. Note that this slice category can be identified with the slice category $(\Delta \downarrow T)$. Since $N(\Delta \downarrow T)$ is weakly equivalent to $T$ and $T$ is weakly contractible, we can then conclude that $N(\tau)$ is a weak equivalence. Also, the simplicial set $N(\Delta \downarrow X)$ is weakly equivalent to $X$. 

In conclusion, the mapping space $\Map_{\MM}(0,1)$ is weakly equivalent to $X$. Since $X$ is $(n-1)$-connected by assumption,  this completes the proof of the lemma. 
\qed

\medskip

We are now ready to prove Lemma \ref{criterionB} (Criterion B) on the comparison between weakly initial sets and (hypo)initial objects. 

\medskip

\noindent \textbf{Proof of Lemma \ref{criterionB} (Criterion B).} We first prove (1). As remarked earlier, a hypoinitial object is clearly weakly initial, so one direction is obvious. Conversely, suppose that $\C$ has a small weakly initial set $S$. We denote by $\C_S$ the full subcategory of $\C$ which is spanned by the objects in $S$. Note 
that $\C_S$ is essentially small since $\C$ is locally small. We will prove the following two claims: 
\begin{enumerate}
\item[(i)] The inclusion $\iota \colon \C_S \to \C$ admits a cone $\iota^{\triangleleft} \colon \C_S^{\triangleleft} \to \C$. 
\item[(ii)] The inclusion $\iota \colon \C_S \to \C$ is coinitial (see \cite[4.1.1]{HTT}, \cite[2.4]{NRS} for the definition and properties of coinitial maps).
\end{enumerate}
Assuming (ii), it follows that the canonical ``restriction'' functor $\C_{/\id} \to \C_{/\iota}$ is an equivalence of $\infty$-categories. Assuming (i), there is a cone $\iota^{\triangleleft}$ which defines an object in $\C_{/\iota}$. Then this can be lifted to an object in $\C_{/\id}$, that is, a cone on the identity $\id \colon \C \to \C$, as required. Thus, the proof of (1) will be complete once we prove (i) and (ii). 

\medskip

\noindent \emph{Proof of (i).} Since $\C$ admits small products and weak pullbacks of order $(n-2)$, it follows by Proposition \ref{weakly_cocomplete_colimits} that every diagram $K \to \C$, where $K$ is a small simplicial set of dimension $\leq n$, admits 
a cone. Without loss of generality, we may assume that $\C_S$ is small. Then there exists a cone $F \colon \Delta^0 \ast \sk_n(\C_S) \to \C$ on the restriction of $\iota$ to the $n$-skeleton of $\C_S$,
\[
\mathrm{sk}_n(\C_S) \subset \C_S \xrightarrow{\iota} \C.
\]
In order to extend this cone to a cone on $\iota$, we need to find a diagonal filler in the following diagram:
\[
\begin{tikzcd}
\Delta^0 \ast \mathrm{sk}_n(\C_S) \cup_{\mathrm{sk}_n(\C_S)} \C_S \arrow[d,hookrightarrow] \arrow[r] & \C. \\
\Delta^0 \ast \C_S \arrow[ur,dashed]
\end{tikzcd}
\]
This extension problem is (uniquely) solvable, since $\C$ is an $n$-category and $\Delta^0 \ast \C_S$ is obtained from $\Delta^0 \ast \mathrm{sk}_n(\C_S) \cup_{\mathrm{sk}_n(\C_S)} \C_S$ by attaching simplices of dimension  $\geq n+2$. This proves (i).

\medskip

\noindent \emph{Proof of (ii).} By \cite[Theorem 4.1.3.1]{HTT}, the claim is equivalent to the claim that the slice $\infty$-category $\iota_{/x}$ is weakly contractible for every $x \in \C$. For this, it suffices to prove the following \emph{assertion}: every map $K \to \iota_{/x}$, where $K$ is a finite simplicial set, factors through a weakly contractible simplicial set.  We will first prove this assertion in two special cases: (a) when the dimension of $K$ is less than $n$, and (b) when $K$ is $(n-1)$-connected; then we will deduce the general case for a finite simplicial set $K$. 

\emph{Case (a)}: Suppose that the dimension of $K$ is $< n$.  Our assumptions on $\C$ imply that every $L$-diagram $L \to \C$, where $L$ is a small simplicial set of dimension $\leq n$, admits a cone (see Proposition \ref{weakly_cocomplete_colimits}). We claim that for every $K$-diagram $f \colon K \to \iota_{/x}$, there is an extension $\Delta^0 \ast K \to \iota_{/x}$. Let $f \colon K \to \iota_{/x}$ be such a diagram and consider the corresponding $K$-diagram in $\C_{/x}$,
$$K \xrightarrow{f} \iota_{/x} = \C_S \times_{\C} \C_{/x} \xrightarrow{\iota'}  \C_{/x}.$$
The adjoint of $\iota' f$ is a (right) cone $K \ast \Delta^0 \to \C$ in $\C$ with cone object $x \in \C$. Since the dimension of $K \ast \Delta^0$ is $\leq n$, the last conic diagram extends to a (left) cone $F \colon \Delta^0 \ast K \ast \Delta^0 \to \C$. Then the adjoint of $F$, also denoted here by
$$F \colon \Delta^0 \ast K \to \C_{/x},$$
is a cone on $\iota' f$ with some (underlying) cone object $y \in \C$. Since $S$ is weakly initial in $\C$, there is $s \in S$ and a morphism $u \colon s \to y$ in $\C$. Precomposition of $F$ with the morphism $u$ yields a new (left) cone on $\iota'f$,
$$F_s \colon \Delta^0 \ast K \to \C_{/x},$$
with cone object $s \in S$. Since $\C_S$ is a full subcategory of $\C$, the diagram 
$$\Delta^0 \ast K \xrightarrow{F_s} \C_{/x} \xrightarrow{q} \C$$
factors through the inclusion $\C_S \subset \C$. Thus, the cone $F_s$ lifts to a cone on $f$, 
$$\Delta^0 \ast K \to \iota_{/x},$$
which proves the claim. The simplicial set $\Delta^0 \ast K$ is weakly contractible, so this concludes the proof in Case (a). 

\emph{Case (b)}: Suppose that $K$ is $(n-1)$-connected. It follows from our assumptions on $\C$ that any $L$-diagram $L \to \C$, where $L$ is a small simplicial set of dimension $\leq n$, admits a cone (see Proposition \ref{weakly_cocomplete_colimits}). In fact, 
this conclusion holds for \emph{any} small simplicial set $L$ and $L$-diagram $f \colon L \to \C$. To see this, note that the composite diagram $\sk_n(L) \subset L \xrightarrow{f} \C$ admits a cone by Proposition \ref{weakly_cocomplete_colimits}, and this cone can then be extended to a cone on $f$ using that $\C$ is an $n$-category and the same argument as in the proof of (i) above. So for any map $f \colon K \to \iota_{/x}$, the composite map 
\begin{equation} \label{composite_dia} \tag{*}
K \xrightarrow{f} \iota_{/x} \xrightarrow{q'} \C_S \xrightarrow{\iota} \C
\end{equation}
admits a cone $F \colon \Delta^0 \ast K \to \C$. Here $q' \colon \iota_{/x} = \C_S \times_{\C} \C_{/x} \to \C_S$ denotes the right fibration which is the pullback of the canonical right fibration $q \colon \C_{/x} \to \C$. Let $y \in \C$ denote the cone object of $F$. Since $S$ is weakly initial, there is $s \in S$ and a morphism $u \colon s \to y$ in $\C$. This morphism can be extended essentially uniquely to a morphism of cones $F_s \to F$ on the diagram \eqref{composite_dia}, where $s$ is the cone object of $F_s$. Moreover, since $\C_S$ is a full subcategory of $\C$, the cone $F_s$ factors through the inclusion $\iota \colon \C_S \to \C$.  Thus, we obtain a (left) cone (also denoted) $F_s \colon \Delta^0 \ast K \to \C_S$ on the diagram $q'f$ with cone object $s \in \C_S$. On the other hand, the adjoint of the diagram $f$ gives a (right) cone $F_x \colon K \ast \Delta^0 \to \C$
on the diagram \eqref{composite_dia} with cone object $x \in \C$. The cones $F_s$ and $F_x$ determine the following diagram in $\C$,
\begin{equation} \label{composite_dia2} \tag{**}
\Delta^0 \ast K \cup_{K} K \ast \Delta^0 \xrightarrow{F_s \cup F_x} \C.
\end{equation}
The inclusion map 
$$j \colon \Delta^0 \ast K \cup_{K} K \ast \Delta^0 \to \Delta^0 \ast K \ast \Delta^0$$
is a categorical $n$-equivalence by Lemma \ref{technical_lemma}. Therefore, since $\C$ is an $n$-category, it follows that \eqref{composite_dia2} extends along the inclusion $j$. Hence we obtain a diagram 
$$\Delta^0 \ast K \ast \Delta^0 \to \C$$
whose left cone is $F_s$ and the right cone object is $x \in \C$. Thus, the adjoint diagram defines a cone on $f$, 
$$\Delta^0 \ast K \to \iota_{/x}.$$ 
Since the simplicial set $\Delta^0 \ast K$ is weakly contractible, this completes the proof in Case (b). 

Suppose now that $K$ is an arbitrary finite simplicial set and let $K \to \iota_{/x}$ be a $K$-diagram. Using the result of \emph{Case (a}) above, the composite diagram 
$$\sk_{n-1}(K) \subset K \to \iota_{/x}$$
factors through a weakly contractible simplicial set $C$. As a consequence, $K \to \iota_{/x}$ extends also to a diagram on the pushout  $L := K \cup_{\sk_{n-1}(K)} C$; in other words, we have a diagram as follows,
$$
\xymatrix{
\sk_{n-1}(K) \ar[r] \ar[d] & K \ar[r] \ar[d] & \iota_{/x} \\
C \ar[r] & L := K \cup_{\sk_{n-1}(K)} C \ar@{-->}[ru] &
}
$$
where $L$ is $(n-1)$-connected (since $\sk_{n-1}(K) \to K$ is $(n-1)$-connected). Using the result of \emph{Case (b)}, the induced diagram $L \to \iota_{/x}$ factors further through a weakly contractible simplicial set. This factorization produces the required factorization of $K \to \iota_{/x}$ through a weakly contractible simplicial set. 

This proves (ii) and also completes the proof of (1).  

\medskip

(2) is a consequence of (1) and Proposition \ref{hypoinitial_vs_initial}. \qed

\begin{corollary} \label{weakly_initial_vs_initial}
Let $\C$ be a locally small $n$-category, where $n \geq 3$ is an integer or $n=\infty$. Suppose that $\C$ has small products and weak pullbacks of order $(n-2)$. Then $\C$ has a small weakly initial set if and only if $\C$ has an initial object. 
\end{corollary}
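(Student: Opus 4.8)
The plan is to reduce the statement to Criterion B(2) (Lemma~\ref{criterionB}(2)). The hypotheses of the corollary --- that $\C$ is a locally small $n$-category with small products and weak pullbacks of order $(n-2)$ --- are precisely the standing hypotheses of Lemma~\ref{criterionB}; the only additional input required by Lemma~\ref{criterionB}(2) is that $\C$ admit weak $\N^{\op}$-limits of order $1$. I would therefore prove the corollary by showing that this additional property holds automatically as soon as $n \geq 3$, and then invoke Lemma~\ref{criterionB}(2) verbatim.

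To produce the weak $\N^{\op}$-limits of order $1$, I would first replace $\N^{\op}$ by the opposite infinite spine $\mathrm{Spine}_{\infty}^{\op} = (\Delta^1 \amalg_{\Delta^0} \Delta^1 \amalg_{\Delta^0} \cdots)^{\op}$, which is Joyal equivalent to $\N$ (as recalled in the proof of Proposition~\ref{hypoinitial_vs_initial}(3)); since weak limits are invariant under Joyal equivalence of the indexing diagram, weak $\N^{\op}$-limits may be computed as weak $\mathrm{Spine}_{\infty}^{\op}$-limits. The decisive feature of the spine is that it is a \emph{small} simplicial set of dimension $d = 1$. I would then run the inductive construction of weak limits along the skeletal filtration --- the same argument that underlies Example~\ref{example_CriterionA}, and that is used for small (not necessarily finite) simplicial sets in the proof of Criterion B --- which rests on \cite[Proposition 3.10]{Ra}: starting from the (genuine) small product over the vertices and performing the single correction over the $1$-simplices by means of weak pullbacks of order $(n-2)$, one obtains weak $K$-limits of order $(n-1-d)$ for every small simplicial set $K$ of dimension $d$. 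For $K = \mathrm{Spine}_{\infty}^{\op}$ this yields weak limits of order $(n-2)$. The use of small products here (rather than merely finite products as in Example~\ref{example_CriterionA}) is exactly what accommodates the countably many vertices and edges of the infinite spine; the resulting order is unaffected by this cardinality, since products are honest limits.

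Finally, since $n \geq 3$ we have $n - 2 \geq 1$, and a weak limit of order $(n-2)$ is in particular a weak limit of order $1$ --- its fibers are $(n-3)$-connected, hence $0$-connected. Thus $\C$ admits weak $\N^{\op}$-limits of order $1$, and Lemma~\ref{criterionB}(2) gives the desired equivalence between having a small weakly initial set and having an initial object. I expect the one genuinely delicate point to be the order bookkeeping in the skeletal induction of the previous paragraph: one must check that passing from $\sk_0$ to $\sk_1$ of the spine degrades the order by exactly $1$, and that the infinite cardinality of the vertex and edge sets causes no further loss beyond the need for small products. Because the spine is only $1$-dimensional, this amounts to a single application of \cite[Proposition 3.10]{Ra}, so the bookkeeping is short --- and it is precisely this single-step loss of order that makes the hypothesis $n \geq 3$ both natural and necessary.
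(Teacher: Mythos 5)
Your proposal is correct and takes essentially the same route as the paper: the paper's proof likewise notes that $\N^{\op}$ is Joyal equivalent to its $1$-dimensional spine, uses \cite[Proposition 3.10]{Ra} to deduce that $\C$ admits weak $K$-limits of order $(n-2)\geq 1$ for any $1$-dimensional $K$ from small products and weak pullbacks of order $(n-2)$, and then invokes Lemma~\ref{criterionB}(2). Your more detailed skeletal bookkeeping (order drop of exactly $1$, small products absorbing the infinite vertex and edge sets) simply spells out what the paper compresses into a single line, and it is accurate.
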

\begin{proof}
Note that $\N^{\op}$ is Joyal equivalent to its spine, that is, the union of the $1$-simplices $(k + 1 \to k)$ for $k \geq 0$, which is a $1$-dimensional simplicial set. By Proposition \ref{weakly_cocomplete_colimits}, for any $1$-dimensional simplicial set $K$, $\C$ admits weak $K$-limits of order $(n-2) \geq 1$. So $\C$ admits weak $\N^{\op}$-limits of order $1$. Then the result follows directly from Lemma \ref{criterionB}(2).  
\end{proof}

\begin{example} \label{counterexample_lower_1}
Corollary \ref{weakly_initial_vs_initial} fails for $n =1$. For a counterexample, let $X$ be a set with two elements, consider the (ordinary) category $\Set_{X/}$ of sets under $X$, and let $\C$ be the full subcategory of $\Set_{X/}$ spanned by the non-initial objects. Then 
$\C$ inherits small products from $\Set_{X/}$. This uses the observation that a product of a (small) family of objects $(X \to X_i)_{i \in I}$ in $\Set_{X/}$ is initial if and only if exactly one of the objects is initial and every other object is terminal. Moreover, $\C$ admits a hypoinitial object: simply consider any object $(X \to Y)$ in $\Set_{X/}$ with a retraction $(Y \to X)$ (under $X$) and use the retraction map to define the required cone $\C^{\triangleleft} \to \C$ (cf. Example \ref{initial_example1}). This cone can be used to define weak pullbacks (or general weak limits) of order $(-1)$ for any diagram in $\C$. Thus, $\C$ satisfies the assumptions of Corollary \ref{weakly_initial_vs_initial} and has a weakly initial object, but $\C$ does not have an initial object.    
\end{example}

\begin{example} \label{counterexample_lower_2}
Corollary \ref{weakly_initial_vs_initial} fails also for $n=2$. For a counterexample, let $X$ be a set with two elements, regarded as an object in the $\infty$-category $\S$ of spaces, and consider the homotopy $2$-category $\h_2(\S_{X/})$ of spaces under $X$. Let $\C$ denote the full subcategory of $\h_2(\S_{X/})$ spanned by the non-initial objects.  Then $\C$ inherits small products from $\S_{X/}$. Similarly to Example \ref{counterexample_lower_1}, this uses the observation that a product of a (small) family of objects $(X \to X_i)_{i \in I}$ in $\S_{X/}$ (or $\h_2(\S_{X/})$) is initial if and only if exactly one of the objects is initial and every other object is terminal. To see the existence of weak pullbacks of order $0$, note first that $\h_2(\S_{X/})$ admits weak pullbacks of order $1$ (Example \ref{weakly_cocomplete_example}) and a square in $\C$ is a weak pullback of order $1$ if (and only if) it is so in $\h_2(\S_{X/})$. Hence it remains to construct weak pullbacks of order $0$ for diagrams in $\C$
$$
\xymatrix{
& (B, X \to B) \ar[d] \\
(A, X \to A) \ar[r] & (C, X \to C) 
}
$$
which have the initial object $(X, X \xrightarrow{\mathrm{id}} X)$ as weak pullback of order $1$ in $\h_2(\S_{X/})$. Consider any object $(Y, i \colon X \to Y)$ in $\C$ with a retraction $r \colon Y \to X$; this defines a morphism to the initial object $(X, \mathrm{id}_X)$ in $\h_2(\S_{X/})$. Then precomposition with $r$ gives a square in $\C$:
 $$
 \xymatrix{
(Y, i \colon X \to Y)  \ar[r] \ar[d] & (B, X \to B) \ar[d] \\
 (A, X \to A) \ar[r] & (C, X \to C).
 }
 $$
The fact that $(X, \mathrm{id}_X)$ is a retract of $(Y, i)$ in $\h_2(\S_{X/})$ readily implies that the last square is a weak pullback of order $0$. Thus, $\C$ satisfies the assumption of Corollary \ref{weakly_initial_vs_initial} and the object $(Y, i \colon X \to Y)$ is weakly initial in $\C$ (of order $0$), by using the retraction $r$, but $\C$ does not have an initial object. 
\end{example}

\subsection{Adjoint functor theorems for $n$-categories} The characterization of adjoint functors in Proposition \ref{characterization_adjoint_functor} translates the problem of the existence of a right (left) adjoint to an associated problem about the existence of a terminal (initial) object. Using our two criteria for the existence of initial objects (Lemmas \ref{criterionA} and \ref{criterionB}), we can now formulate two general adjoint functor theorems
for functors between $n$-categories. These theorems generalize and refine the general adjoint functor theorems for $\infty$-categories shown in \cite{NRS} and provide useful characterizations of adjoint functors $G \colon \D \to \C$ between $n$-categories when $\D$ is weakly $n$-complete. The characterizations 
are given in terms of the following properties which were also considered for the adjoint functor theorems in \cite{NRS}. 

\begin{definition} \label{solution_set_conditions}
Let $\C$ and $\D$ be $\infty$-categories and let $G \colon \D \to \C$ be a functor. 
\begin{enumerate}
\item We say that $G$ satisfies the $h$-\emph{initial object condition} if the slice $\infty$-category $G_{c/}$ admits an $h$-initial object for every $c \in \C$. 
\item We say that $G$ satisfies the \emph{solution set condition} if the slice $\infty$-category $G_{c/}$ admits a small weakly initial set for every $c \in \C$. 
\end{enumerate}
\end{definition}

The first adjoint functor theorem builds on Criterion A (Lemma \ref{criterionA}) and generalizes \cite[Theorem 3.2.6]{NRS}. 

\begin{theorem}[\emph{n}-GAFT$_{\mathrm{fin}}$]\label{nGAFTfin}
Let $G \colon \D \to \C$ be a functor between $n$-categories, where $n \geq 1$ is an integer or $n= \infty$. Suppose that $\D$ is a finitely weakly complete $n$-category. Then $G$ admits a left adjoint if and only if $G$ preserves finite products, weak pullbacks of order $(n-1)$, and satisfies the $h$-initial object condition.
\end{theorem}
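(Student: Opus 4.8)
The plan is to reduce everything to the recognition principle of Proposition \ref{characterization_adjoint_functor}: $G$ admits a left adjoint if and only if the slice $\infty$-category $G_{c/}$ has an initial object for every $c \in \C$. The necessity of the three conditions is the routine direction. If $G$ has a left adjoint then it is a right adjoint, so by Proposition \ref{adjoint_preserve_weak_co_limits}(2) it preserves all higher weak limits; finite products are genuine (order $\infty$) limits and weak pullbacks of order $(n-1)$ are a special case, so both are preserved. Moreover Proposition \ref{characterization_adjoint_functor} supplies an actual initial object in each $G_{c/}$, and every initial object is $h$-initial (Definition \ref{def_initial_objs_types}), so the $h$-initial object condition holds. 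This settles the ``only if'' direction.

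For the converse I fix $c \in \C$ and aim to promote the given $h$-initial object of $G_{c/}$ to an initial one by means of Criterion A (Lemma \ref{criterionA}). First I record that $G_{c/}$ is again an $n$-category: it is a homotopy pullback of $\D$ and $\C_{c/}$ over $\C$, the slice $\C_{c/}$ is an $n$-category by \cite[Corollary 2.3.4.10]{HTT}, and a pullback of $n$-categories is an $n$-category. It then remains only to verify that $G_{c/}$ admits weak $K$-limits of order $0$ for every finite simplicial set $K$ of dimension $\leq n-1$, which is exactly the hypothesis of Criterion A.

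The mechanism for producing these weak limits in the slice is Corollary \ref{limits_slices_under_objects}, which transfers (weak) limits along $G$ at the cost of lowering the order by one. Concretely, applying it to a finite discrete diagram shows that $G_{c/}$ inherits genuine finite products from those of $\D$, which $G$ preserves by hypothesis; applying it to the cospan diagram (of dimension $1$) and using that $\D$ has weak pullbacks of order $(n-1)$ which $G$ preserves, I obtain weak pullbacks of order $(n-2)$ in $G_{c/}$. At this point $G_{c/}$ is an $n$-category equipped with finite products and weak pullbacks of order $(n-2)$, so Example \ref{example_CriterionA} guarantees that it satisfies the hypotheses of Criterion A. Consequently the $h$-initial object provided by the $h$-initial object condition is in fact initial, and Proposition \ref{characterization_adjoint_functor} yields the desired left adjoint.

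The delicate point of the argument is the bookkeeping of orders, and it is no accident that the threshold in the hypotheses is $(n-1)$: passage to the slice $G_{c/}$ drops the order by exactly one (Corollary \ref{limits_slices_under_objects}), so weak pullbacks of order $(n-1)$ in $\D$ descend to weak pullbacks of order $(n-2)$ in $G_{c/}$, which is precisely what Example \ref{example_CriterionA} needs in order to reconstruct all finite weak limits of order $\geq 0$ in dimensions $\leq n-1$ -- exactly the range required by Criterion A to identify $h$-initial objects with initial ones. Crucially, $G$ is only ever asked to preserve the two basic shapes (finite products and order-$(n-1)$ pullbacks), both of which hold by assumption, so no additional preservation statement about general weak $K$-limits in $\D$ is required.
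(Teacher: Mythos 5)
Your proof is correct and is essentially the paper's own argument: reduce to initial objects in the slices $G_{c/}$ via Proposition \ref{characterization_adjoint_functor}, get necessity from Proposition \ref{adjoint_preserve_weak_co_limits}, and for sufficiency transfer finite products and weak pullbacks of order $(n-2)$ to $G_{c/}$ by Corollary \ref{limits_slices_under_objects}, then apply Example \ref{example_CriterionA} and Criterion A (Lemma \ref{criterionA}). The only cosmetic differences are that you make explicit that $G_{c/}$ is an $n$-category (which the paper uses tacitly) and that the paper dispatches the case $n=1$ separately as obvious, whereas your order bookkeeping covers it uniformly.
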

\begin{proof}
By Proposition \ref{characterization_adjoint_functor}, $G$ is a right adjoint if and only if $G_{c/}$ has an initial object for every $c \in \C$. 

If $G$ is a right adjoint, then $G$ clearly satisfies the $h$-initial object condition. Moreover, $G$ preserves finite products and weak pullbacks of order $(n-1)$ by Proposition \ref{adjoint_preserve_weak_co_limits}. 

Conversely, suppose that $G$ preserves finite products and weak pullbacks of order $(n-1)$, and $G_{c/}$ admits an $h$-initial object for every $c \in \C$. The case $n=1$ is obvious, so we may assume that $n \geq 2$. By Corollary \ref{limits_slices_under_objects}, the slice $n$-category $G_{c/}$ admits finite products and weak pullbacks of order $(n-2)$. Thus, using Proposition \ref{weakly_cocomplete_colimits}, it follows that $G_{c/}$ admits weak $K$-limits of order $0$ for any finite simplicial set $K$ of dimension $\leq n-1$ (see Example \ref{example_CriterionA}). Then the result follows from Lemma \ref{criterionA} (Criterion A).    
\end{proof}

The second adjoint functor theorem builds on Lemma \ref{criterionB} (Criterion B) and generalizes \cite[Theorem 3.2.5]{NRS}. We recall from \cite{NRS} that an $\infty$-category $\C$ is \emph{2-locally small} if for every pair of objects $x, y \in \C$, the mapping space $\map_{\C}(x, y)$ is locally small \cite[Definition 3.2.4]{NRS}.

\begin{theorem}[\emph{n}-GAFT] \label{nGAFT}
Let $G \colon \D \to \C$ be a functor between $n$-categories, where $n \geq 2$ is an integer or $n=\infty$. Suppose that $\D$ is a locally small weakly complete $n$-category and $\C$ is 2-locally small. Then $G$ admits a left adjoint if and only if $G$ preserves small products, weak pullbacks of order $(n-1)$, and satisfies the solution set condition.
\end{theorem}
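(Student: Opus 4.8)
The plan is to mirror the proof of Theorem \ref{nGAFTfin}: via Proposition \ref{characterization_adjoint_functor} I would reduce the existence of a left adjoint of $G$ to the existence of an initial object in each slice $\infty$-category $G_{c/}$, $c \in \C$, and then extract such initial objects from Criterion B, or more precisely from its packaged consequence Corollary \ref{weakly_initial_vs_initial}, which is tailored exactly to the range $n \geq 3$.

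For the \emph{only if} direction I would argue as follows. If $G$ has a left adjoint, then Proposition \ref{characterization_adjoint_functor} provides an initial object in every $G_{c/}$; an initial object is weakly initial, so the singleton it spans is a small weakly initial set and the solution set condition holds. Preservation of small products and of weak pullbacks of order $(n-1)$ is then immediate from Proposition \ref{adjoint_preserve_weak_co_limits}(2), since a right adjoint preserves weak $K$-limits of all orders.

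The real content is the \emph{if} direction. Assuming that $G$ preserves small products and weak pullbacks of order $(n-1)$ and satisfies the solution set condition, I would fix $c \in \C$ and verify that $G_{c/}$ meets every hypothesis of Corollary \ref{weakly_initial_vs_initial}. First, $G_{c/}$ is an $n$-category, being the homotopy pullback $\C_{c/} \times_{\C} \D$ of $n$-categories. Second, applying the genuine-limit form of Corollary \ref{limits_slices_under_objects} to the small products of $\D$, which $G$ preserves, shows that $G_{c/}$ has small products; applying the weak form of the same corollary to the weak pullbacks of order $(n-1)$ of $\D$, which $G$ also preserves, shows that $G_{c/}$ has weak pullbacks of order $(n-2)$ -- here $n-2 \geq 1$ since $n \geq 3$, which is what the corollary requires. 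Third, the solution set condition is exactly the assertion that $G_{c/}$ admits a small weakly initial set. Granting in addition that $G_{c/}$ is locally small, Corollary \ref{weakly_initial_vs_initial} then yields the desired initial object, and Proposition \ref{characterization_adjoint_functor} completes the proof.

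The hard part, and the only step using that $\C$ is $2$-locally small, will be establishing that $G_{c/}$ is locally small. Here I would analyze the mapping spaces of $G_{c/}$ through the fiber sequence
\[
\map_{G_{c/}}\big((d_1, f_1), (d_2, f_2)\big) \longrightarrow \map_{\D}(d_1, d_2) \xrightarrow{\, f_1^{\ast} \circ G \,} \map_{\C}(c, G(d_2)),
\]
with the fiber taken over the point $f_2$. Given a small set of objects of $G_{c/}$, local smallness of $\D$ makes the total spaces $\map_{\D}(d_1, d_2)$ essentially small, while $2$-local smallness of $\C$ makes the base spaces $\map_{\C}(c, G(d_2))$ locally small, hence with all homotopy groups small. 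The long exact sequence of the fiber sequence then traps each group $\pi_k \map_{G_{c/}}$ between $\pi_{k+1}\map_{\C}(c, G(d_2))$ and $\pi_k \map_{\D}(d_1, d_2)$, and controls $\pi_0$ in the same fashion, so each such mapping space is essentially small and $G_{c/}$ is locally small. It is precisely the term $\pi_{k+1}$ of the base appearing in this sequence that forces the stronger hypothesis of $2$-local smallness of $\C$ rather than mere local smallness, exactly as at the corresponding point in the proof of \cite[Theorem 3.2.5]{NRS}.
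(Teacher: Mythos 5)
Your proposal follows the paper's proof essentially verbatim: reduce to initial objects in the slices $G_{c/}$ via Proposition \ref{characterization_adjoint_functor}, transfer small products and weak pullbacks of order $(n-2)$ to $G_{c/}$ with Corollary \ref{limits_slices_under_objects}, and conclude with Corollary \ref{weakly_initial_vs_initial}; where the paper simply cites \cite[Lemma 3.2.8]{NRS} for local smallness of $G_{c/}$, your fiber-sequence argument correctly reproduces that lemma's content. Only your closing aside has the comparison backwards: $2$-local smallness is the \emph{weaker} hypothesis (local smallness of $\C$ implies it), and the point of the fiber sequence is that only $\pi_{j \geq 1}$ of the base spaces $\map_{\C}(c, G(d_2))$ intervene, so this weaker condition already suffices.
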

\begin{proof}
By Proposition \ref{characterization_adjoint_functor}, $G$ is a right adjoint if and only if $G_{c/}$ admits an initial object for every $c \in \C$. Therefore, if $G$ is a right adjoint, then $G$ clearly satisfies the solution set condition. Moreover, $G$ preserves small products and weak pullbacks of order $(n-1)$ by Proposition \ref{adjoint_preserve_weak_co_limits}.

Conversely, suppose that $G$ preserves small products and weak pullbacks of order $(n-1)$, and the slice $n$-category $G_{c/}$ admits a small weakly initial set for every $c \in \C$. By Corollary \ref{limits_slices_under_objects}, the slice $n$-category $G_{c/}$ admits small products and weak pullbacks of order $(n-2)$. Moreover, $G_{c/}$ is locally small since $\D$ is locally small and $\C$ is 2-locally small (see \cite[Lemma 3.2.8]{NRS}). Then, for $n \geq 3$, Corollary \ref{weakly_initial_vs_initial} implies that $G_{c/}$ has an initial object for every $c \in \C$, and therefore $G$ admits a left adjoint. 

As we pointed out in Example \ref{counterexample_lower_2}, Corollary \ref{weakly_initial_vs_initial} fails in general for $n=2$, so we will need a separate argument for $n=2$. Even though $G_{c/}$ only admits weak $\N^{\op}$-limits of order $0$ in this case (using Proposition \ref{weakly_cocomplete_colimits} or Corollary \ref{limits_slices_under_objects}), the proof will be similar to the proof of Proposition \ref{hypoinitial_vs_initial}(3). (Essentially, this proof still works in this case because the weak $\N^{\op}$-limits of order $0$ in $G_{c/}$ can be constructed by lifting weak $\N^{\op}$-limits of order $1$ in $\D$ and the canonical left fibration $q \colon G_{c/} \to \D$ detects equivalences.) 

By Lemma \ref{criterionB}(1), the $\infty$-category $G_{c/}$ admits a hypoinitial object for every $c \in \C$. So there is cone $C_u \colon G_{c/}^{\triangleleft} \to G_{c/}$ on the identity functor with cone object denoted by $(x, u \colon c \to G(x)) \in G_{c/}$. By Proposition \ref{hypoinitial_vs_initial}(2), the 
component $C_u(x, u)$ of the cone $C_u$ at $(x, u \colon c \to G(x))$ is an idempotent morphism:
$$C_u(x, u)=(e , \sigma) \colon (x, u \colon c \to G(x)) \xrightarrow{(e, \sigma)} (x, u \colon c \to G(x))$$
where $e \colon x \to x$ is idempotent in $\D$ and $\sigma \colon (u \colon c \to G(x)) \to (u \colon c \to G(x))$ denotes the associated idempotent in $\C_{c/}$. Proceeding as in the proof of Proposition \ref{hypoinitial_vs_initial}(3), we consider the $\mathrm{Spine}_{\infty}^{\op}$-diagram in $G_{c/}$,
\[
E \colon \cdots \xrightarrow{(e, \sigma)}  (x, u) \xrightarrow{(e, \sigma )} (x,u) \xrightarrow{(e, \sigma)} (x,u),
\]
which admits a weak limit of order $0$, denoted by
\[
{}^{(y,v)} E \colon (\mathrm{Spine}_{\infty}^{\op})^{\triangleleft} \to G_{c/},
\]
where $(y, v \colon c \to G(y))\in G_{c/}$ denotes the cone object. Following Corollary \ref{limits_slices_under_objects}, this weak limit (of order $0$) is constructed by taking a weak limit of order $1$, denoted ${}^y \overline{E} \colon (\mathrm{Spine}_{\infty}^{\op})^{\triangleleft} \to \D$, of 
the induced diagram in $\D$,
$$\overline{E} \colon \mathrm{Spine}_{\infty}^{\op} \xrightarrow{E} G_{c/} \xrightarrow{q} \D,$$
with cone object $y \in \D$, and then lifting it appropriately to $G_{c/}$. 
The $0$-th/bottom component of the cone ${}^{(y,v)} E$ is a morphism
\[
(i, \tau) \colon (y, v) \to (x, u).
\]
Since $(e, \sigma)$ is an idempotent morphism, it follows that any other component of ${}^{(y,v)} E$ is canonically identified with $(i, \tau)$. In particular, $[i] = [e] \circ [i]$ in $\h(\D)$. We may postcompose $(i, \tau)$ with the cone $C_u$ to obtain a new cone on the identity $\id \colon G_{c/} \to G_{c/}$, denoted by 
\[
C_{v} \colon G_{c/}^{\triangleleft} \to G_{c/},
\]
whose cone object is $(y, v) \in G_{c/}$. In particular, $(y, v)$ is hypoinitial in $G_{c/}$. Therefore, using the characterization in Proposition \ref{hypoinitial_vs_initial}(1), it suffices to show that the component of the cone $C_{v}$ at $(y,v)$ 
$$C_{v}(y, v) \colon (y,v) \to (y,v)$$
is an equivalence. Equivalently, it suffices to show that the associated morphism in $\D$, 
$$\overline{C}_{v}(y) = q(C_{v}(y, v)) \colon y \to y,$$
is an equivalence. As in the proof of Proposition \ref{hypoinitial_vs_initial}(3), we consider a new cone ${}^{(x,u)} E$ on $E$, defined by 
$${}^{(x,u)} E \colon \Delta^0 \ast \mathrm{Spine}_{\infty}^{\op} \xrightarrow{\id \ast E} \Delta^0 \ast G_{c/} \xrightarrow{C_{u}} G_{c/}$$
whose cone object is $(x,u) \in G_{c/}$. 
There is a morphism of cones ${}^{(x,u)} E \to {}^{(y,v)} E$ defined by the following diagram:
$$\Delta^0 \ast (\Delta^0 \ast \mathrm{Spine}_{\infty}^{\op}) \xrightarrow{\id \ast {}^{(y,v)} E} \Delta^0 \ast G_{c/} \xrightarrow{C_u} G_{c/}.$$
Let $(r, \rho) \colon (x, u) \to (y, v)$ denote the associated morphism between the cone objects. By construction, this is just $C_{u}(y, v)$. In particular, we have $[e] = [i] \circ [r]$ in $\h(\D)$.

There is also a morphism of cones ${}^{(y,v)} E \to {}^{(x, u)} E$, which we may choose so that the associated morphism of cone objects is $(i, \tau) \colon (y, v) \to (x, u)$. To see this, note that precomposition of ${}^{(x,u)} E$ with $(i, \tau)$ defines a cone ${}^{(y, v)} E'$
with cone object $(y,v)$ as well as a morphism of cones ${}^{(y,v)} E' \to {}^{(x,u)} E$. In order to identify ${}^{(y,v)} E'$ with ${}^{(y,v)} E$, we observe that the space of cones on $E$ with cone object $(y, v)$ is given by the retract
$$\mathrm{lim}_{\mathrm{Spine}_{\infty}^{\op}} \map_{G_{c/}}((y, v), (x, u)) \subseteq \map_{G_{c/}}((y, v), (x, u)).$$
Thus, two such cones are equivalent if and only if their components are homotopic. This is satisfied by the pair of cones ${}^{(y,v)} E'$ and ${}^{(y,v)} E$.

The composite morphism of cones ${}^{(y,v)} E \to {}^{(x,u)} E \to {}^{(y,v)} E$ induces an endomorphism ${}^y \overline{E} \to {}^y \overline{E}$ of a weak $\N^{\op}$-limit of order $1$ in $\D$. Thus, the latter morphism is (non-canonically) equivalent to the identity morphism; in particular, $[r] \circ [i] = \id_y$.  
Combining these observations, we conclude the following identification of $\overline{C}_v(y)$:
$$[\overline{C}_v(y)] = [q(C_v(y,v))] = [q(r, \rho)] \circ [q(C_v(x,u))] = [r] \circ ( [e] \circ [i]) = [r] \circ [i] = \id_y.$$
In particular, $\overline{C}_v(y)$ is an equivalence, and therefore so is $C_v(y, v)$. This shows that $(y, v)$ is initial in $G_{c/}$ by the characterization in Proposition \ref{hypoinitial_vs_initial}(1). Thus, we have shown that $G_{c/}$ admits an initial object for every $c \in \C$, and so it follows that $G$ admits 
a left adjoint.
\end{proof}

\begin{example}
Theorem \ref{nGAFT} fails for $n=1$. For a counterexample, let $\h(\S)$ be the (ordinary) homotopy category of spaces and let $G \colon \h(\S)^{\op} \to \Set$ be a retract of a representable functor $\hom_{\h(\S)}(-, X)$ (\emph{hyporepresentable} in the sense of \cite{He}) which is not representable. Examples of such functors arise from idempotents $e \colon X \to X$ in $\h(\S)$ which do not split (see \cite{FH}). Then $G$ preserves small products and weak pullbacks (of order $0$) because these properties are closed under retracts (of functors) and representable functors have these properties. Moreover, $F$ satisfies the solution set condition. To see this, let $R \colon \hom_{\h(\S)}(-, X) \to G$ denote the retraction in the category of functors; then observe that for every set $S$, the object in $G_{S/}$ which is given by the composition of the map
$$S  \longrightarrow \hom_{\h(\S)}(\prod_{s \in S} X, X), \ \ s  \mapsto (\prod_{s \in S} X \xrightarrow{p_s} X)$$
followed by the map
$$R(\prod_{s \in S} X) \colon \hom_{\h(\S)}(\prod_{s \in S} X, X) \longrightarrow G(\prod_{s \in S} X)$$
defines a weakly initial object (in fact, this is also hypoinitial). Indeed, since there is a section $i \colon G \to \hom_{\h(\S)}(-, X)$, every object $(Y, h \colon S \to G(Y)) \in G_{S/}$ yields an associated collection of morphisms $\{(i(Y) \circ h)_s \colon Y \to X\}_{s \in S}$ and the induced canonical morphism $Y \to \prod_{s \in S} X$ defines a (natural) morphism to $(Y, h)$. Thus, $G$ satisfies the assumptions of Theorem \ref{nGAFT}, but $G$ does not admit a left adjoint since it is not representable. 
\end{example}

Following \cite{NRS}, an interesting application of these adjoint functor theorems is the following result showing that adjoint functors can often be detected 
on the (ordinary) homotopy category. The following theorem generalizes \cite[Theorem 3.3.1]{NRS}.

\begin{theorem} \label{homotopy_detect_adjoint}
Let $\D$ be a finitely weakly complete $n$-category and $\C$ an $n$-category, where $n \geq 1$ is an integer or $n=\infty$. Let $G \colon \D \to \C$ be a functor which preserves finite products and weak pullbacks of order $(n-1)$. Then $G$ admits a left adjoint if and only if $\h(G) \colon \h(\D) \to \h(\C)$ admits a left adjoint. 
\end{theorem}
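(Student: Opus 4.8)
The plan is to treat the two implications separately, the forward one being essentially formal and the reverse one carrying all of the content.

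\emph{The easy direction.} If $G$ admits a left adjoint $F$, then for all $c \in \C$ and $d \in \D$ the adjunction equivalence $\map_{\D}(F(c), d) \simeq \map_{\C}(c, G(d))$ is natural in both variables. Applying $\pi_0$ yields natural bijections
\[
\Hom_{\h(\D)}(\h(F)[c], [d]) \cong \Hom_{\h(\C)}([c], \h(G)[d]),
\]
which exhibit $\h(F)$ as a left adjoint of $\h(G)$. (Equivalently, the homotopy category functor $\h = \h_1$ carries adjunctions to adjunctions.) Thus I would only have to prove the reverse implication.

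\emph{Reduction to initial objects in the slices.} Assume that $\h(G)$ admits a left adjoint. Since $G$ preserves finite products and weak pullbacks of order $(n-1)$ and $\D$ is finitely weakly complete, Theorem \ref{nGAFTfin} reduces the existence of a left adjoint for $G$ to the $h$-initial object condition, i.e. to showing that $G_{c/}$ has an $h$-initial object for every $c \in \C$. By Corollary \ref{limits_slices_under_objects}, each slice $G_{c/}$ admits finite products and weak pullbacks of order $(n-2)$, hence (Example \ref{example_CriterionA}) weak $K$-limits of order $0$ for every finite simplicial set $K$ of dimension $\leq n-1$; so Criterion A (Lemma \ref{criterionA}) applies and an $h$-initial object of $G_{c/}$ is the same thing as an initial object. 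It therefore suffices to produce, for each $c$, an initial object of $G_{c/}$. From the hypothesis, and the ($1$-categorical case of) Proposition \ref{characterization_adjoint_functor}, each comma category $(\h G)_{[c]/}$ has an initial object; concretely there is an object $d \in \D$ and a morphism $\alpha \colon c \to G(d)$ such that
\[
\pi_0 \map_{\D}(d, d') \longrightarrow \pi_0 \map_{\C}(c, G(d')), \qquad f \longmapsto [G(f) \circ \alpha],
\]
is a bijection for every $d' \in \D$. Under the standard identification of the slice mapping space $\map_{G_{c/}}((d,\alpha),(d',\alpha'))$ with the homotopy fiber over $\alpha'$ of the comparison map $\Theta_{d'} \colon \map_{\D}(d,d') \to \map_{\C}(c, G(d'))$, $f \mapsto G(f)\circ\alpha$, surjectivity on $\pi_0$ shows that $(d,\alpha)$ is weakly initial of order $0$ in $G_{c/}$, while injectivity encodes a uniqueness statement. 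The object $(d,\alpha)$ is thus the natural candidate for an initial (equivalently $h$-initial) object of $G_{c/}$.

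\emph{The crux: upgrading to an $h$-initial object.} It remains to prove that $(d,\alpha)$ is in fact $h$-initial, i.e. that each homotopy fiber of $\Theta_{d'}$ is not merely non-empty but connected; since mapping spaces in an $n$-category are $(n-1)$-truncated, this amounts to showing that each $\Theta_{d'}$ is an equivalence. Here I expect the main difficulty to lie, and the argument acquires the obstruction-theoretic flavor mentioned in the introduction: the $\pi_0$-bijection gives the statement in degree $0$, but the vanishing of the higher relative homotopy (already $\pi_1$-surjectivity of $\Theta_{d'}$) is not formal, as one sees from the long exact sequence of the fibration $\Theta_{d'}$. The plan is to bootstrap degree by degree, in the spirit of the proof of Criterion A: for $0 \leq k \leq n-1$ I would apply the $\pi_0$-bijection above with $d'$ replaced by a weak cotensor $(d')^{S^k_w}$ of $d'$ with the sphere $S^k$. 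Such weak cotensors exist because $\D$ is finitely weakly complete (Proposition \ref{weakly_cocomplete_colimits}), and evaluating the comparison map on them is designed to convert the degree-$0$ statement into the bijectivity of $[S^k, \map_{\D}(d,d')] \to [S^k, \map_{\C}(c,G(d'))]$, and hence the desired isomorphism on $\pi_k$. The delicate point — and the step I expect to be the genuine obstacle — is that this requires $G$ to send $(d')^{S^k_w}$ to a weak cotensor of $G(d')$ with $S^k$ in $\C$ of sufficiently high order; since $G$ is only assumed to preserve finite products and weak pullbacks of order $(n-1)$, one must track carefully, through the skeletal construction of these weak cotensors via \cite[Proposition 3.10]{Ra}, that the orders do not degrade below $1$ in the range $k \leq n-1$. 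Once this connectivity bookkeeping is in place, the bootstrapping shows that every $\Theta_{d'}$ is an equivalence, so that $(d,\alpha)$ is initial in $G_{c/}$, which completes the reverse implication.
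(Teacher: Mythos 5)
Your reduction is exactly the paper's: both directions agree up to the crux, namely producing an $h$-initial object of $G_{c/}$ (Proposition \ref{characterization_adjoint_functor}, Corollary \ref{limits_slices_under_objects}, Criterion A via Theorem \ref{nGAFTfin}) from the initial object $(d,\alpha)$ of $\h(G)_{c/}$. But the sphere-cotensor bootstrap you propose for that step has a genuine gap, and it is not the one you flag. The ``bookkeeping'' concern is actually fine: running $G$ through the skeletal construction of \cite[Proposition 3.10]{Ra} shows that $G$ carries the weak cotensors $(d')^{S^k_w}$ (of order $n-k \geq 1$ for $k \leq n-1$) to weak cotensors of the same order in $\C$. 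The problem is what you then conclude. The $\pi_0$-bijections at the cotensored objects only give bijections on \emph{free} homotopy classes $[S^k, \map_{\D}(d,d')] \to [S^k, \map_{\C}(c, G(d'))]$, and bijections on free classes for all $k \leq n-1$ do not imply that a map of $(n-1)$-truncated spaces is an equivalence — nor even that its homotopy fibers are connected, which is all $h$-initiality requires. At $k=1$ free classes are conjugacy classes: an inclusion $H \hookrightarrow G$ of a proper subgroup meeting every conjugacy class appropriately gives a map $BH \to BG$ inducing bijections on $[S^0,-]$ and $[S^1,-]$ whose homotopy fiber $G/H$ is disconnected; for $k \geq 2$ one sees $\pi_1$-orbits in $\pi_k$ rather than $\pi_k$ itself. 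This is precisely the classical failure of the spheres to be weak generators in the homotopy category of spaces, which the paper itself invokes (\cite{He, AC}). Note also that this trap is why Criterion A (Lemma \ref{criterionA}) is immune: there the \emph{target} of the $\pi_0$-comparison is a singleton, and triviality of all free classes does force contractibility, whereas a bijection of nontrivial free classes forces nothing. So your final sentence (``the bootstrapping shows that every $\Theta_{d'}$ is an equivalence'') does not follow from the stated inputs.

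The paper circumvents this with a different, essentially $1$-categorical idea. It compares the two candidate homotopy categories of the slice: the canonical functor $\h(G_{c/}) \to \h(G)_{c/}$ is surjective on objects, full, and conservative. The products and weak pullbacks of order $(n-2)$ in $G_{c/}$ yield weak equalizers of order $(n-2)$ there (equalizer cones are cones on an associated pullback-type diagram), and by \cite[Proposition 3.21]{Ra} these descend to weak equalizers of order $0$ in $\h(G_{c/})$: every parallel pair $f, g$ admits $u$ with $f \circ u = g \circ u$. Then \cite[Lemma 3.3.2]{NRS} applies: if $F$ is full, conservative and surjective on objects and its source has this pre-equalizing property, $F$ detects initial objects. (Sketch: given $f, g \colon a \rightrightarrows a'$ with $F(a)$ initial, choose $u \colon a'' \to a$ pre-equalizing them; initiality downstairs and fullness produce $h \colon a \to a''$ with $F(u \circ h) = \mathrm{id}_{F(a)}$, so $u \circ h$ is invertible by conservativity, whence $f = g$.) The weak-equalizer trick is exactly what resolves the conjugacy/orbit ambiguity that free-class bijections cannot see: it lifts the initial object of $\h(G)_{c/}$ to an initial object of $\h(G_{c/})$, i.e.\ an $h$-initial object of $G_{c/}$, after which Theorem \ref{nGAFTfin} concludes just as you intended. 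This is the one missing idea in your proposal; everything before and after it is in line with the paper.
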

\begin{proof}
We recall from Proposition \ref{characterization_adjoint_functor} that $G$ admits a left adjoint if and only if $G_{c/}$ admits an initial object for every $c \in \C$. 

Note first that if $G$ admits a left adjoint, then obviously so does $\h(G)$. 

Conversely, suppose that $\h(G)$ admits a left adjoint. The case $n=1$ is obvious, so we may assume that $n \geq 2$. By Corollary \ref{limits_slices_under_objects}, $G_{c/}$ has finite products and weak pullbacks of order $(n-2)$ for every $c \in \C$. By assumption, $\h(G)_{c/}$ has an initial object for every $c \in \C$. It is easy to see that the canonical functor 
$$\h(G_{c/}) \to \h(G)_{c/}$$
is surjective on objects, full, and conservative (see the proof of \cite[Theorem 3.3.1]{NRS}). 

Moreover, we claim that for every pair of morphisms in $\h(G_{c/})$,
$$f, g \colon (d, c \to G(d)) \rightrightarrows (d', c \to G(d')),$$
there is a morphism 
$$u_{f,g} \colon (d'', c \to G(d'')) \to (d, c \to G(d)) \text{ such that } f \circ u_{f, g} = g \circ u_{f,g}.$$ To see this, note first that $G_{c/}$ admits weak equalizers of order $(n-2)$. This is because cones on a diagram of the form $(\alpha, \beta \colon \ast \rightrightarrows \bullet)$ are identified with cones on the associated diagram $(\bullet \xrightarrow{\Delta} \bullet \times \bullet \xleftarrow{(\alpha, \beta)} \ast)$, and $G_{c/}$ admits products and weak pullbacks of order $(n-2)$ (or we may apply Proposition \ref{weakly_cocomplete_colimits} directly). By Proposition \cite[Proposition 3.20]{Ra}, the canonical functor $G_{c/} \to \h(G_{c/})$ sends these higher weak equalizers to weak equalizers of order $0$. It follows that $\h(G_{c/})$ admits weak equalizers (of order $0$), which shows our claim above about the existence of $u_{f,g}$. 

Applying \cite[Lemma 3.3.2]{NRS}, we conclude that the functor $\h(G_{c/}) \to \h(G)_{c/}$ preserves and detects initial objects. Thus, $G_{c/}$ has an 
$h$-initial object and then the result follows directly from Theorem \ref{nGAFTfin}.   
\end{proof}

\begin{example}
Let $\D$ be a complete $\infty$-category and consider the canonical functor $\gamma_n \colon \D \to \h_n\D$. By \cite[Corollary 3.22]{Ra}, the functor $\gamma_n$ preserves small products and weak pullbacks of order $(n-1)$. However, $\gamma_n$ does not admit a left adjoint unless $\D$ is equivalent to 
an $n$-category and $\gamma_n$ is an equivalence. To see this, note that if $\gamma_n$ happens to be a right adjoint, then it must preserve finite limits, and then the conclusion follows from \cite[Corollary 3.22]{Ra}. This example demonstrates the role of $n$ in the assumptions of Theorem \ref{homotopy_detect_adjoint}.
\end{example}

\begin{remark}
We mention the following weaker version of Theorem \ref{homotopy_detect_adjoint}, which can be obtained from Theorem \ref{nGAFT} instead of Theorem \ref{nGAFTfin} (cf. \cite[Remark 3.3.3]{NRS}). Let $\D$ be a locally small weakly complete $n$-category and $\C$ a 2-locally small $n$-category, where $n \geq 2$ is an integer or $n=\infty$. Let $G \colon \D \to \C$ be a functor which preserves small products and weak pullbacks of order $(n-1)$. 
Then $G$ admits a left adjoint if and only if $\h(G) \colon \h(\D) \to \h(\C)$ admits a left adjoint. For the proof, note that $G_{c/}$ admits 
a small weakly initial set if and only if $\h(G)_{c/}$ admits a small weakly initial set (see \cite[Proposition 3.2.2]{NRS}), and then apply Theorem \ref{nGAFT}. 
\end{remark}

Similarly to \cite[Corollary 3.3.5]{NRS}, we deduce the following corollary as a consequence of Theorem \ref{homotopy_detect_adjoint}.

\begin{corollary}
Let $\D$ be a finitely weakly complete $n$-category and $\C$ an $n$-category, where $n \geq 1$ is an integer or $n=\infty$. Let $G \colon \D \to \C$ be a functor which preserves finite products and weak pullbacks of order $(n-1)$. Then $G$ is an equivalence if and only if $\h(G) \colon \h(\D) \to \h(\C)$ is an equivalence.
\end{corollary}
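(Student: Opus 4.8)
The plan is to deduce this from the adjoint detection theorem (Theorem \ref{homotopy_detect_adjoint}) and then to upgrade the resulting equivalence of homotopy categories to an equivalence of $n$-categories by inspecting the unit and counit. The forward implication is purely formal: since $\C \mapsto \h(\C)$ is functorial, an equivalence $G$ with quasi-inverse $G^{-1}$ induces an equivalence $\h(G)$ with quasi-inverse $\h(G^{-1})$.

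For the converse, I would assume that $\h(G)$ is an equivalence. Then $\h(G)$ admits a left adjoint (a quasi-inverse), and since $\D$ is finitely weakly complete and $G$ preserves finite products and weak pullbacks of order $(n-1)$, Theorem \ref{homotopy_detect_adjoint} applies and produces a left adjoint $F \colon \C \to \D$ of $G$, with unit $\eta \colon \id_{\C} \to G F$ and counit $\epsilon \colon F G \to \id_{\D}$. Applying $\pi_0$ to the natural equivalences $\map_{\D}(F(c), d) \simeq \map_{\C}(c, G(d))$ yields a natural bijection $\Hom_{\h\D}(\h(F)(c), d) \cong \Hom_{\h\C}(c, \h(G)(d))$, so that $\h(F) \dashv \h(G)$, with unit and counit given by the images $\h(\eta)$ and $\h(\epsilon)$. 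As $\h(G)$ is an equivalence, its left adjoint $\h(F)$ is a quasi-inverse, and hence both $\h(\eta)$ and $\h(\epsilon)$ are natural isomorphisms.

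The final step is to lift this back along the localizations $\C \to \h\C$ and $\D \to \h\D$. For each object $c \in \C$ the component $\h(\eta)_c = [\eta_c]$ is an isomorphism in $\h\C$; since a morphism of an $\infty$-category is an equivalence precisely when its image in the homotopy category is an isomorphism, each $\eta_c$ is an equivalence, so $\eta$ is a natural equivalence. The same reasoning applies to $\epsilon$. Thus the unit and counit of the adjunction $F \dashv G$ are both equivalences, which means $F$ and $G$ are mutually inverse equivalences; in particular $G$ is an equivalence.

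Most of the work is carried out by Theorem \ref{homotopy_detect_adjoint}, so the remaining content is light. The one point that requires care is the second paragraph: one must verify that $\h$ carries the $n$-categorical adjunction $F \dashv G$ to the $1$-categorical adjunction $\h(F) \dashv \h(G)$ and correctly identifies the respective (co)units, so that the invertibility of $\h(\eta)$ and $\h(\epsilon)$ --- forced by $\h(G)$ being an equivalence --- can be transported back to $\eta$ and $\epsilon$.
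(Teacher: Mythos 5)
Your proof is correct and follows exactly the route the paper intends: the paper deduces this corollary from Theorem \ref{homotopy_detect_adjoint} (mirroring \cite[Corollary 3.3.5]{NRS}), namely by producing a left adjoint $F \dashv G$ from the invertibility of $\h(G)$, descending the adjunction to homotopy categories, and using the fact that equivalences in an $\infty$-category are detected in the homotopy category to see that the unit and counit are equivalences. Your careful verification that $\h$ carries $F \dashv G$ to $\h(F) \dashv \h(G)$ with the expected (co)units is precisely the step implicit in the paper's citation.
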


\section{Higher Brown representability} \label{section:higher_Brown_rep}

\subsection{Preliminaries}

Let $\C$ be a locally small $n$-category, where $n \geq 1$ is an integer or $n=\infty$. We write $\map_{\C}(x, y)$ to denote a functorial model for the (small) mapping space of morphisms from $x$ to $y$ in $\C$ (see \cite{Ci, HTT}).  Note that this is an $(n-1)$-truncated $\infty$-groupoid \cite[Proposition 2.3.4.18]{HTT}.

We recall that $\S$ denotes the $\infty$-category of (small) spaces and $\Sn \subset \S$ the full subcategory of $\S$ that is spanned by the $(n-1)$-truncated spaces. 

\medskip

A functor $F \colon \C^{\op} \to \Sn$ is \emph{representable} if it is equivalent to a functor of the form $\map_{\C}(-, x) \colon \C^{\op} \to \Sn$ for some object $x \in \C$. Every representable functor satisfies the conditions (B1)--(B2) below.

\begin{definition} \label{Brown_rep_def}
Let $\C$ be a locally small weakly cocomplete $n$-category (see Definition \ref{weakly_cocomplete_def}), where $n \geq 1$ is an integer or $n=\infty$. We say that $\C$ \emph{satisfies Brown representability} if for any given functor $F \colon \C^{\op} \to \Sn$, $F$ is representable if (and only if) the following conditions are satisfied. 
\begin{enumerate}
\item[(B1).] For any small coproduct $\coprod_{i\in I} x_i$ in $\C$, the canonical morphism in $\Sn$
$$
F\left(\coprod_{i\in I} x_i\right)\longrightarrow \prod_{i\in I} F(x_i)
$$
is an equivalence.
\item[(B2).] For every weak pushout in $\C$ of order $(n-1)$
\begin{equation*} 
\xymatrix{
x \ar[r] \ar[d] & y \ar[d] \\ 
z \ar[r] & w}
\end{equation*}
the canonical morphism in $\Sn$
$$
F(w)\longrightarrow F(y)\times_{F(x)}F(z)
$$
is $(n-1)$-connected. 
%(The square is $(n-1)$-cartesian in spaces.)
\end{enumerate}
\end{definition}

We remark that the Brown representability context of Definition \ref{Brown_rep_def} is more general than the one in our previous work \cite[Section 5]{NRS} in two different respects. First, the present context aims to generalize Brown representability from suitable ordinary homotopy ($1$-)categories to higher homotopy ($n$-)categories, whereas the Brown representability context of \cite[Definition 5.1.1]{NRS} was concerned with ordinary categories which arise as homotopy $1$-categories of suitably nice $\infty$-categories. Second, we focus here directly on the abstract notion of a weakly cocomplete $n$-category, as a convenient axiomatization of the properties of homotopy $n$-categories of cocomplete $\infty$-categories, without the assumption of a model given by a  cocomplete $\infty$-category (cf. \cite[Definition 5.1.1]{NRS}).

\begin{remark}
If $I$ is the empty set, then property (B1) says that $F$ sends the initial object of $\C$ to a contractible space. Note that we have not assumed that $\C$ is pointed in general. If $\C$ happens to be pointed, then every functor $F$ satisfying (B1) is canonically pointed, i.e., it factors canonically through the category of pointed $(n-1)$-truncated spaces.
\end{remark}

The next proposition explains the connection between Brown representability and adjoint functor theorems (cf. \cite[Proposition 5.1.3]{NRS}). 

\begin{proposition} \label{Brown_implies_adjoints}
Let $\C$ and $\D$ be locally small $n$-categories, where $n \geq 1$ is an integer or $n=\infty$. Suppose that $\C$ is a weakly cocomplete $n$-category and satisfies Brown representability. Then a functor $F \colon \C \to \D$ admits a right adjoint if and only if $F$ satisfies the following properties:
\begin{itemize}
\item[(B1$'$).] $F$ preserves small coproducts. 
\item[(B2$'$).] $F$ preserves weak pushouts of order $(n-1)$. 
\end{itemize}
\end{proposition}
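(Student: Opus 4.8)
The plan is to use the dual characterization of adjoint functors in Proposition \ref{characterization_adjoint_functor}, which reduces the existence of a right adjoint for $F$ to the representability of the functors $\map_{\D}(F(-), d)$ for all $d \in \D$, and then feed these functors into the Brown representability hypothesis on $\C$. The forward direction is immediate: if $F$ admits a right adjoint, then $F$ is a left adjoint, so by Proposition \ref{adjoint_preserve_weak_co_limits}(1) it preserves all weak colimits of every order; in particular it preserves small coproducts (genuine colimits, which exist since $\C$ is weakly cocomplete) and weak pushouts of order $(n-1)$, which gives (B1$'$) and (B2$'$).

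For the converse, assume (B1$'$) and (B2$'$). Fix $d \in \D$ and set $F_d := \map_{\D}(F(-), d) \colon \C^{\op} \to \S$. Since $\C$ and $\D$ are $n$-categories, their mapping spaces are $(n-1)$-truncated, so $F_d$ factors through $\Sn$. The strategy is to verify that $F_d$ satisfies Brown's conditions (B1) and (B2); then the Brown representability of $\C$ yields that $F_d$ is representable, and since this holds for every $d$, the implication (3$'$)$\Rightarrow$(1$'$) of Proposition \ref{characterization_adjoint_functor} produces the right adjoint.

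To check (B1) for $F_d$: given a small coproduct $\coprod_{i} x_i$ in $\C$, property (B1$'$) gives $F(\coprod_{i} x_i) \simeq \coprod_{i} F(x_i)$, and the universal property of coproducts in $\D$ identifies $\map_{\D}(\coprod_{i} F(x_i), d) \simeq \prod_{i} \map_{\D}(F(x_i), d)$, which is exactly the equivalence required in (B1). To check (B2): given a weak pushout of order $(n-1)$ in $\C$ with corners $x, y, z, w$, property (B2$'$) makes its image a weak pushout of order $(n-1)$ in $\D$. Writing $K$ for the span shape and $f_0$ for the image span $F(y) \leftarrow F(x) \to F(z)$, Proposition \ref{characterization_weak_colimit}(c) applied with $t = n-1$ says precisely that the canonical restriction map $\map_{\D}(F(w), d) \to \map_{\D^K}(f_0, c_d)$ is $(n-1)$-connected. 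The point requiring care is the identification $\map_{\D^K}(f_0, c_d) \simeq F_d(y) \times_{F_d(x)} F_d(z)$: a natural transformation from the span $f_0$ to the constant diagram $c_d$ amounts to maps $F(y) \to d$ and $F(z) \to d$ together with a homotopy between their restrictions along $F(x) \to F(y)$ and $F(x) \to F(z)$, which is exactly the homotopy pullback of mapping spaces. Under this identification the restriction map becomes the canonical comparison map of (B2), so (B2) holds.

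I expect the only genuinely delicate step to be this last identification of the limit mapping space $\map_{\D^K}(f_0, c_d)$ with the homotopy pullback, together with the verification that the map produced by Proposition \ref{characterization_weak_colimit}(c) agrees with the canonical morphism appearing in (B2); everything else is formal bookkeeping. The argument is uniform in $n$: for $n = \infty$ condition (B2) becomes an equivalence and weak pushouts are genuine pushouts, while for $n = 1$ it specializes to the classical Brown representability setting, where $(n-1)$-connectedness is surjectivity on $\pi_0$.
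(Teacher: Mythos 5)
Your proposal is correct and takes essentially the same approach as the paper: both directions match (the forward direction via Proposition \ref{adjoint_preserve_weak_co_limits}, the converse by reducing through Proposition \ref{characterization_adjoint_functor} to representability of $\map_{\D}(F(-),d)$, checking (B1)--(B2) from (B1$'$)--(B2$'$) using Proposition \ref{characterization_weak_colimit}, and invoking Brown representability). Your write-up simply makes explicit the identification of $\map_{\D^K}(f_0, c_d)$ with the homotopy pullback $F_d(y)\times_{F_d(x)} F_d(z)$, a point the paper's terser proof leaves implicit.
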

\begin{proof} The functor $F$ admits a right adjoint if and only if for every $d \in \D$, the associated functor 
$$Y_{F, d} \colon \C^{\op} \to \Sn, \   \ c \mapsto \map_{\D}(F(c), d),$$
is representable (see Proposition \ref{characterization_adjoint_functor}). Suppose that $F$ satisfies (B1$'$)--(B2$'$). Since $F$ satisfies (B1$'$), it follows that the above functor $Y_{F, d}$ 
satisfies (B1). Using Proposition \ref{characterization_weak_colimit}, $Y_{F, d}$ also satisfies (B2), because $F$ satisfies (B2$'$).  
Since $\C$ satisfies Brown representability by assumption, it follows that $Y_{F,d}$ is representable for any $d \in \D$, and therefore 
$F$ admits a right adjoint.

Conversely, suppose that $F$ is a left adjoint. Then it preserves coproducts and weak pushouts of order $(n-1)$ by Proposition \ref{adjoint_preserve_weak_co_limits}. 
\end{proof}

The last proposition has some special significance when $\C$ happens to admit all small colimits, especially, in the case $n=1$. 

\begin{corollary} \label{Brown_implies_adjoints_2}
Let $\C$ and $\D$ be locally small $n$-categories, where $n \geq 1$ is an integer or $n = \infty$. Suppose that $\C$ is cocomplete and satisfies Brown representability. Then the following statements for a functor $F \colon \C \to \D$ are equivalent:
\begin{enumerate}
\item[(a)] $F$ admits a right adjoint.
\item[(b)] $F$ preserves small colimits.
\item[(c)] $F$ satisfies (B1$'$)--(B2$'$). 
\end{enumerate}
\end{corollary}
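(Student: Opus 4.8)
The plan is to deduce the three-way equivalence by combining Proposition \ref{Brown_implies_adjoints} with the standard fact that left adjoints preserve colimits, so that the only implication requiring a direct argument is $(b) \Rightarrow (c)$. First I would note that a cocomplete $n$-category is in particular weakly cocomplete (as remarked immediately after Definition \ref{weakly_cocomplete_def}); hence Proposition \ref{Brown_implies_adjoints} applies verbatim and already yields the equivalence $(a) \Leftrightarrow (c)$. For $(a) \Rightarrow (b)$ I would invoke Proposition \ref{adjoint_preserve_weak_co_limits}(1) with $t = \infty$ together with Example \ref{weak-colimits-infinity}, which identifies weak colimits of order $\infty$ with honest colimits, so that a left adjoint preserves all small colimits. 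Since $(c) \Rightarrow (a)$ is the easy half of Proposition \ref{Brown_implies_adjoints}, it then suffices to prove $(b) \Rightarrow (c)$ in order to close the cycle.

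For $(b) \Rightarrow (c)$, condition (B1$'$) is immediate because small coproducts are small colimits. The substance lies in (B2$'$): I must show that $F$ carries a weak pushout square $S$ of order $(n-1)$, with cocone object $w$, to a weak pushout of order $(n-1)$ in $\D$. The plan is to compare $S$ with the honest pushout square $P$, with cocone object $w_0$, which exists since $\C$ is cocomplete and which is a weak colimit of order $\infty$. As $P$ is initial among cocones on the span, there is an essentially unique cocone map $P \to S$, and as $S$ is weakly initial of order $(n-1) \geq 0$ among such cocones there is also a cocone map $S \to P$; the composite $P \to S \to P$ is the identity by initiality of $P$, so $P$ is a retract of $S$ in the category of cocones.

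The argument then splits according to $n$. For $n \geq 2$ (and for $n = \infty$) one has $n-1 > 0$, so Proposition \ref{Kan_complex_weakly_initial} forces the two weak colimits $S$ and $P$ to be equivalent; consequently $F(S) \simeq F(P)$, and since $F$ preserves the colimit $P$, the square $F(P)$ is an honest pushout in $\D$, hence a weak pushout of order $(n-1)$, and therefore so is $F(S)$. For $n = 1$, where weak pushouts of order $0$ need not be unique, I would instead use the retract directly: $F$ preserves the pushout, so $F(w_0)$ is the pushout in $\D$, and precomposing the canonical comparison map out of $F(w_0)$ with the image $F(w) \to F(w_0)$ of the cocone retraction produces, for each competing cocone, a (not necessarily unique) filler out of $F(w)$, exhibiting $F(S)$ as a weak pushout of order $0$.

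The one genuine obstacle is precisely the non-uniqueness of weak pushouts in the $n=1$ case, which blocks the clean reduction to honest pushouts available when $n \geq 2$; this is dispatched by the explicit retract computation, which is nothing but the classical verification that a colimit-preserving functor between ordinary cocomplete categories preserves weak pushouts. I expect the remaining bookkeeping (checking that the comparison maps assemble into genuine cocone morphisms and that weak-pushout-ness is invariant under cocone equivalence via Proposition \ref{characterization_weak_colimit}) to be routine.
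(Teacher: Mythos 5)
Your proof is correct and takes essentially the route the paper intends: the corollary is stated there without proof as an immediate consequence of Proposition \ref{Brown_implies_adjoints} together with the fact that left adjoints preserve small colimits, and the only point needing verification is (b) $\Rightarrow$ (c), which you handle exactly right. In particular, your case split is sound -- for $n \geq 2$ Proposition \ref{Kan_complex_weakly_initial} indeed forces any weak pushout of order $(n-1)$ in a cocomplete $n$-category to be equivalent to the genuine pushout cocone, while for $n = 1$ the retract/filler argument is the standard classical verification -- so there are no gaps.
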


The following corollary generalizes \cite[Corollary 5.1.4]{NRS}.

\begin{corollary} \label{Brown_implies_complete} 
Let $\C$ be a locally small cocomplete $\infty$-category such that either the associated weakly cocomplete $n$-category $\h_n \C$ satisfies Brown representability for some $n \geq 1$, or $\C$ itself satisfies Brown representability. Then $\C$ is complete.
\end{corollary}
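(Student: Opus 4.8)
The plan is to reduce completeness of $\C$ to the existence of two kinds of limits and to realize each as a right adjoint to a constant--diagram functor, to which the Brown representability hypothesis can then be applied. By a standard reduction, a locally small $\infty$-category admits all small limits as soon as it admits small products and pullbacks (cf. \cite{HTT}); so it suffices to treat the shapes $K = I$ (a discrete set, for products) and $K$ equal to the walking cospan (for pullbacks). For such a small $K$, the constant--diagram functor $\delta_K \colon \C \to \C^K$ is defined, $\C^K$ is again locally small and cocomplete, and $\delta_K$ preserves all small colimits: colimits in the functor $\infty$-category $\C^K$ are computed objectwise, and $\delta_K$ carries a colimit to the objectwise--constant diagram on that colimit. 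A right adjoint to $\delta_K$ is precisely a $K$-limit functor, so the whole statement follows once I produce right adjoints to all the $\delta_K$.

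First I would dispose of the case in which $\C$ itself satisfies Brown representability (the case $n=\infty$). Here $\C$ is cocomplete and satisfies Brown representability, so Corollary \ref{Brown_implies_adjoints_2} applies verbatim: the colimit--preserving functor $\delta_K$ admits a right adjoint. Taking $K$ discrete and $K$ the walking cospan yields small products and pullbacks in $\C$, hence completeness.

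For the case where $\h_n\C$ satisfies Brown representability with $n$ finite, I would first pass to homotopy $n$-categories. Using the identification $\h_n(\C^K) \simeq (\h_n\C)^K$ and the fact that the canonical functor $\C \to \h_n\C$ preserves small coproducts and weak pushouts of order $(n-1)$ (see Example \ref{weakly_cocomplete_example}), one checks that $\delta_K \colon \h_n\C \to (\h_n\C)^K$ preserves small coproducts and weak pushouts of order $(n-1)$: indeed, by the dual of Proposition \ref{weak_limits_in_pullbacks} these weak colimits in the functor $n$-category $(\h_n\C)^K$ are computed objectwise, and $\delta_K$ sends such a weak colimit to the objectwise--constant one. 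Since $\h_n\C$ is weakly cocomplete and satisfies Brown representability while $(\h_n\C)^K$ is locally small, Proposition \ref{Brown_implies_adjoints} then furnishes a right adjoint to $\delta_K \colon \h_n\C \to (\h_n\C)^K$ for every such $K$; in other words, $\h_n\C$ has honest small products and pullbacks and is therefore complete as an $n$-category.

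The remaining, and genuinely hard, step is to \emph{lift} this completeness from $\h_n\C$ back to $\C$. The difficulty is intrinsic: applying Brown representability of $\h_n\C$ to the functor $c \mapsto \prod_{i} \map_\C(c, x_i)$ only represents its $(n-1)$-truncation, so the object $P \in \C$ produced in $\h_n\C$ comes with a natural map $\map_\C(c,P) \to \prod_i \map_\C(c,x_i)$ that is merely $n$-connected rather than an equivalence -- it is an $n$-connected approximation to the genuine product, and the analogous phenomenon occurs for pullbacks. The detection theorem (Theorem \ref{homotopy_detect_adjoint}) and its dual are of no direct help here, since they require the source of the functor to be an $n$-category, whereas $\C$ is not. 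Instead I would upgrade the approximation by an obstruction--theoretic argument up the Postnikov tower of the relevant functor $\C^{\op} \to \S$: using representability at each level to improve the connectivity of the comparison map and to kill the successive obstruction classes, and then passing to the limit of these compatible approximations to produce a genuine cone in $\C$ whose comparison map is an equivalence. Verifying that the obstruction classes vanish -- which is where the connectivity estimate in condition (B2) and the higher weak (co)limit calculus of Section \ref{section:higher_weak_colimits} enter decisively -- is the main obstacle, and it is precisely the point at which the homotopy--theoretic flavor of these arguments, anticipated in the introduction, becomes essential.
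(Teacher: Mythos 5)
Your reduction to products and pullbacks and your treatment of the case where $\C$ itself satisfies Brown representability are fine and agree with the paper (which simply handles all small $K$ at once via Proposition \ref{Brown_implies_adjoints} at $n=\infty$). But the middle step contains an error: the identification $\h_n(\C^K) \simeq (\h_n\C)^K$ on which you base your argument is false in general, already for $n=1$ (the homotopy category of a diagram category is not the diagram category of the homotopy category). Relatedly, your appeal to the dual of Proposition \ref{weak_limits_in_pullbacks} to compute weak pushouts in $(\h_n\C)^K$ objectwise is inapposite -- that proposition concerns pullbacks of $\infty$-categories, not functor categories, and weak (co)limits in functor categories are \emph{not} computed objectwise in general, precisely because weak (co)limits lack functoriality. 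The paper sidesteps both issues by applying Proposition \ref{Brown_implies_adjoints} to $\h_n(c) \colon \h_n\C \to \h_n(\C^K)$, whose target is merely required to be a locally small $n$-category; the needed preservation properties of $\h_n(c)$ follow from the fact that $\gamma_n$ sends coproducts to coproducts and pushouts to weak pushouts of order $(n-1)$ (\cite[Proposition 3.21]{Ra}), combined with colimits in $\C^K$ being objectwise. Your version is repairable by this substitution, at the cost of no longer concluding that $\h_n\C$ is complete as an $n$-category -- but that conclusion is not needed.

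The genuine gap is your final step. You correctly identify that one must pass from adjoint-type information at the truncated level back to $\C$, but you assert that Theorem \ref{homotopy_detect_adjoint} ``is of no direct help here, since the source must be an $n$-category, whereas $\C$ is not,'' and you substitute an obstruction-theoretic Postnikov-tower argument that is only sketched, with the vanishing of the obstruction classes explicitly left open. The assertion is wrong: Theorem \ref{homotopy_detect_adjoint} is stated for $n \geq 1$ an integer \emph{or} $n=\infty$, and at $n=\infty$ every $\infty$-category is an $n$-category, ``finitely weakly cocomplete'' means finitely cocomplete, and weak pushouts of order $(n-1)$ are honest pushouts. Its dual therefore applies verbatim to $c \colon \C \to \C^K$, which preserves all small colimits: $c$ admits a right adjoint if and only if $\h(c)$ does. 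Since $\h_n(c)$ admits a right adjoint by the Brown representability step, so does $\h(c)$ (pass from $\h_n$ to $\h$), and the proof closes in one line. The homotopy-theoretic work you anticipate having to do by hand -- upgrading an $h$-terminal object of the slice to a terminal one -- is exactly what is already packaged inside the proof of Theorem \ref{homotopy_detect_adjoint} via Criterion A, so your unfinished obstruction argument is solving a problem the paper's toolkit has already solved.
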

\begin{proof}
Let $K$ be a small simplicial set and let $c \colon \C \to \C^K$ denote the constant $K$-diagram functor. We need to show that $c$ admits a right adjoint for any $K$. Since colimits in $\C^K$ are computed pointwise, it follows that $c$ preserves small colimits (see \cite[Corollary 5.1.2.3]{HTT} or \cite[Corollary 6.2.10]{Ci}). Moreover, $\C^K$ is again locally small (see \cite[Example 5.4.1.8]{HTT}). 

If $\C$ satisfies Brown representability, then $c$ admits a right adjoint by Proposition 
\ref{Brown_implies_adjoints} (for $n = \infty$).  

Suppose that $\h_n\C$ satisfies Brown representability for some $n \geq 1$. Since $c$ preserves small colimits, it follows that the functor $\h_n(c) \colon \h_n \C \to \h_n(\C^K)$ preserves small coproducts and weak pushouts of order $(n-1)$. This uses the fact that the canonical functor $\gamma_n \colon \C \to \h_n\C$ (resp. $\gamma_n \colon \C^K \to \h_n(\C^K)$) preserves small coproducts and sends pushouts to weak pushouts of order $(n-1)$ \cite[Proposition 3.20, Corollary 3.22]{Ra}. (In the special case $n=1$, note that it suffices to show that $\h(c)$ preserves \emph{some} choice of weak pushout for each diagram.) Then, applying Proposition \ref{Brown_implies_adjoints}, we conclude that $\h_n(c)$ admits a right adjoint, therefore $\h(c) \colon \h(\C) \to \h(\C^K)$ admits a right adjoint, too. Then the result follows from Theorem \ref{homotopy_detect_adjoint} applied to the functor $c \colon \C \to \C^K$ (see also \cite[Theorem 3.3.1 and Remark 3.3.3]{NRS}). 
\end{proof}

%\begin{remark}
%A more refined variant of Corollary \ref{Brown_implies_complete} is possible (with essentially the same proof using the results of \cite[Section 3]{Ra}). This asserts that a locally small weakly cocomplete $n$-category $\C$ admits weak $K$-limits (of an order depending on $n$ and the dimension of $K$) if $\h_k(\C)$ satisfies Brown representability for some $k \geq 1$ (which, again, depends on $n$ and the dimension of $K$). We leave the details to the interested reader. 
%\end{remark}

Similarly to the proof of Corollary \ref{Brown_implies_complete}, we may more generally combine Proposition \ref{Brown_implies_adjoints} and Theorem \ref{homotopy_detect_adjoint} and use \cite[Proposition 3.20]{Ra} to obtain the following (cf. \cite[Theorem 4.1.3, Corollary 5.1.5]{NRS}):

\begin{corollary} \label{Beown_implies_adjoints_3}
Let $\C$ and $\D$ be locally small $n$-categories, where $n \geq 1$ is an integer or $n=\infty$. Suppose that $\C$ is a weakly cocomplete $n$-category such that the weakly cocomplete $k$-category $\h_k \C$ satisfies Brown representability for some $1 \leq k \leq n$. Then a functor $F \colon \C \to \D$ admits a right adjoint 
if and only if $F$ preserves small coproducts and weak pushouts of order $(n-1)$.
\end{corollary}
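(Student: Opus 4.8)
The plan is to carry the hypotheses down to the homotopy $k$-category, invoke Brown representability there through Proposition \ref{Brown_implies_adjoints}, and then transport the resulting adjunction back up to $\C$ and $\D$ by means of (the dual of) Theorem \ref{homotopy_detect_adjoint}, exactly in the spirit of the proof of Corollary \ref{Brown_implies_complete}. One direction is immediate: if $F$ admits a right adjoint, then $F$ is a left adjoint and hence preserves all weak colimits of every order by Proposition \ref{adjoint_preserve_weak_co_limits}; in particular it preserves small coproducts and weak pushouts of order $(n-1)$.

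For the converse, suppose that $F$ preserves small coproducts and weak pushouts of order $(n-1)$. The first, and main, step is to analyze the truncation functors $\gamma_\C \colon \C \to \h_k\C$ and $\gamma_\D \colon \D \to \h_k\D$, which are bijective on objects and full and satisfy $\map_{\h_k\C}(\gamma_\C x, \gamma_\C y) \simeq \tau_{\leq k-1}\map_\C(x,y)$ (and similarly for $\D$). Using the mapping-space characterization of higher weak colimits in Proposition \ref{characterization_weak_colimit}(c), I would verify the following $n$-categorical analogue of \cite[Proposition 3.21]{Ra}: $\gamma_\C$ preserves any small coproduct and sends any weak pushout of order $(n-1)$ to a weak pushout of order $(k-1)$ (and likewise for $\gamma_\D$). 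The coproduct assertion reduces to the fact that $(k-1)$-truncation commutes with small products. The weak-pushout assertion is a connectivity computation of the same type as in the proof of Proposition \ref{weak_limits_in_pullbacks}: the truncation maps $\map_\C(a,v) \to \tau_{\leq k-1}\map_\C(a,v)$ are $k$-connected, so the comparison from the pullback of the untruncated mapping spaces to the pullback of the $(k-1)$-truncated ones is $(k-1)$-connected; combined with the hypothesis that the untruncated comparison map is $(n-1)$-connected, hence $(k-1)$-connected since $k \leq n$, this yields the desired $(k-1)$-connectivity after truncation.

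Granting this, I would deduce that $\h_k(F)$ preserves small coproducts and weak pushouts of order $(k-1)$: since $\gamma_\C$ is full and bijective on objects, every such weak colimit diagram in $\h_k\C$ lifts to a weak colimit diagram in $\C$ (using the uniqueness of higher weak colimits from Proposition \ref{Kan_complex_weakly_initial} to identify it with the $\gamma_\C$-image when $k \geq 2$), and $F$ followed by $\gamma_\D$ carries it to the required weak colimit in $\h_k\D$. As $\h_k\C$ is a locally small weakly cocomplete $k$-category satisfying Brown representability and $\h_k\D$ is locally small, Proposition \ref{Brown_implies_adjoints} shows that $\h_k(F)$ admits a right adjoint. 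Applying the ordinary homotopy-category functor to this adjunction (which preserves adjunctions) gives that $\h(F) = \h_1(F) \colon \h\C \to \h\D$ admits a right adjoint. Finally, $\C$ is a finitely weakly cocomplete $n$-category and $F$ preserves finite coproducts and weak pushouts of order $(n-1)$, so the dual of Theorem \ref{homotopy_detect_adjoint} applies and lets us conclude that $F$ itself admits a right adjoint.

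The main obstacle is the behaviour of $\gamma_\C$ and $\gamma_\D$ on weak pushouts, together with the attendant lifting of weak-pushout squares from $\h_k\C$ to $\C$; the coproduct bookkeeping is soft by comparison. The case $k=1$ requires a little care, since weak pushouts of order $0$ are not unique and Proposition \ref{Kan_complex_weakly_initial} is unavailable: as in the remark in the proof of Corollary \ref{Brown_implies_complete}, it then suffices for the Brown-representability input to preserve one chosen weak pushout for each span, which is precisely what the lifting construction supplies.
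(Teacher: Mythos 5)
Your proposal is correct and follows essentially the same route as the paper, which proves this corollary exactly by combining Proposition \ref{Brown_implies_adjoints} (applied to $\h_k(F)$) with Theorem \ref{homotopy_detect_adjoint}, as in the proof of Corollary \ref{Brown_implies_complete}. The details you supply -- that $\gamma_k$ preserves coproducts and sends weak pushouts of order $(n-1)$ to weak pushouts of order $(k-1)$, the lifting of weak pushout squares along the full, bijective-on-objects truncation, and the $k=1$ caveat about preserving \emph{some} choice of weak pushout per span -- are precisely the points the paper delegates to \cite[Proposition 3.21]{Ra} and the parenthetical remark in Corollary \ref{Brown_implies_complete}.
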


\begin{remark}
Given a locally small weakly cocomplete $n$-category $\C$, note that a functor $F: \C^{\op} \to \Sn$ is representable if the functor $F^{\op} \colon \C \to \Sn^{\op}$ is a left adjoint. Assuming that $\C$ satisfies Brown representability, the converse holds for representable functors $\map_{\C}(-, x)$ which send weak pushouts of order $(n-1)$ to weak pullbacks of order $(n-1)$. (Note that weak pullbacks of order $(n-1)$ in $\Sn$ for $n \geq 2$ are just pullbacks in the complete $\infty$-category $\Sn$.)  
\end{remark}

\begin{example} \label{importance_of_n}
Let $\C$ be a locally small cocomplete $\infty$-category. The canonical functor $\gamma_n \colon \C \to \h_n \C$ preserves small coproducts and weak  pushouts of order $(n-1)$ (see \cite[Proposition 3.20, Corollary 3.22]{Ra}). However, $\gamma_n$ does not admit a right adjoint in general -- even if $\C$ satisfies Brown representability. This example demonstrates the importance of the double function of $n$ in Proposition \ref{Brown_implies_adjoints}. 
\end{example}

Our main results in this section will show examples of $n$-categories which satisfy Brown representability; these are discussed in the following subsections. We note first the following elementary inheritance property of Brown representability that will allow us to generate new examples from old ones. 

We recall that a functor $L \colon \C \to \D$ between $\infty$-categories is a \emph{(Bousfield) localization} of $\C$ if $L$ admits a right adjoint which is fully faithful \cite[5.2.7]{HTT}. We also say that $\D$ is a \emph{(Bousfield) localization} of $\C$ if there is a localization functor $L \colon \C \to \D$. 

\begin{proposition} \label{local_preserves_Brown}
Let $\C$ be a locally small weakly cocomplete $n$-category that satisfies Brown representability and let $\D$ be an $n$-category. Suppose that $L \colon \C \to \D$ is a localization of $\C$. Then $\D$ is a locally small weakly cocomplete $n$-category and satisfies Brown representability.
\end{proposition}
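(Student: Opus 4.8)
The plan is to interpret ``$\D$ is a localization of $\C$'' as a reflective localization, i.e.\ to fix the localization functor $L \colon \C \to \D$ together with its fully faithful right adjoint $i \colon \D \to \C$, so that the counit $L i \simeq \id_{\D}$ is an equivalence and $\D$ is identified with the full subcategory of $\C$ spanned by the local objects. Being a full subcategory of the locally small $n$-category $\C$, the category $\D$ is again a locally small $n$-category: its mapping spaces are computed via $i$, hence are small and $(n-1)$-truncated. To see that $\D$ is a weakly cocomplete $n$-category in the sense of Definition \ref{weakly_cocomplete_def}, I would use that $L$, as a left adjoint, preserves small coproducts and, by Proposition \ref{adjoint_preserve_weak_co_limits}(1), weak pushouts of order $(n-1)$. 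Concretely, given a diagram in $\D$, lift it along $i$ to $\C$, form a coproduct (resp.\ a weak pushout of order $(n-1)$) in the weakly cocomplete $\C$, and apply $L$; using $L i \simeq \id_{\D}$, the result is a coproduct (resp.\ weak pushout of order $(n-1)$) of the original diagram in $\D$.

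For Brown representability, only the implication ``(B1)--(B2) $\Rightarrow$ representable'' needs proof, since the converse holds for every weakly cocomplete $n$-category. Given $G \colon \D^{\op} \to \Sn$ satisfying (B1)--(B2), I would transport the problem to $\C$ by setting $F := G \circ L^{\op} \colon \C^{\op} \to \Sn$. The main verification is that $F$ again satisfies (B1)--(B2). For (B1): since $L$ preserves small coproducts, $F(\coprod_{i} c_i) \simeq G(\coprod_{i} L c_i) \to \prod_{i} G(L c_i) = \prod_{i} F(c_i)$ is an equivalence by (B1) for $G$. For (B2): since $L$ preserves weak pushouts of order $(n-1)$ by Proposition \ref{adjoint_preserve_weak_co_limits}(1), a weak pushout of order $(n-1)$ in $\C$ is carried by $L$ to one in $\D$, so the $(n-1)$-connectivity in (B2) for $G$ gives the corresponding $(n-1)$-connectivity for $F$. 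As $\C$ satisfies Brown representability by hypothesis, $F$ is representable, say $F \simeq \map_{\C}(-, x)$ for some $x \in \C$.

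It then remains to descend representability from $\C$ back to $\D$, and this is the step I expect to be the main obstacle. The observation that unlocks it is that $F = G \circ L^{\op}$ inverts every $L$-equivalence (a morphism $f$ with $L f$ an equivalence), because $G$ sends the equivalence $L f$ to an equivalence. Hence the representable functor $\map_{\C}(-, x) \simeq F$ carries all $L$-equivalences to equivalences, which is precisely the condition that $x$ is a \emph{local} object; equivalently, the unit $x \to i L x$ is an equivalence and $x \simeq i(x_0)$ with $x_0 := L x \in \D$. Restricting $F$ along $i$ and using $L i \simeq \id_{\D}$ yields a chain of natural equivalences
\[
G \simeq F \circ i^{\op} \simeq \map_{\C}(i(-), x) \simeq \map_{\C}(i(-), i(x_0)) \simeq \map_{\D}(-, x_0),
\]
where the final step uses that $i$ is fully faithful. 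Thus $G$ is representable by $x_0 \in \D$, which completes the argument. The only delicate points beyond bookkeeping are the identification of the representing object $x$ as a local object (via the $L$-equivalence-inverting property of $F$) and the repeated use of Proposition \ref{adjoint_preserve_weak_co_limits}(1) to ensure that $L$ transports weak pushouts of order $(n-1)$ faithfully between $\C$ and $\D$.
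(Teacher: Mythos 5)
Your proposal is correct and follows essentially the same route as the paper's own proof: both establish that $\D$ is a locally small weakly cocomplete $n$-category via the left adjoint $L$ and Proposition \ref{adjoint_preserve_weak_co_limits}, transport a functor satisfying (B1)--(B2) along $L^{\op}$ to get a representable functor on $\C$, and then descend by observing that the composite inverts $L$-equivalences, so the representing object is local and $i$ being fully faithful identifies the restriction with a representable functor on $\D$. The two ``delicate points'' you flag are exactly the steps the paper relies on, so there is nothing to add.
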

\begin{proof}
We may assume that $i \colon \D \subset \C$ is a full subcategory and $L \colon \C \to \D$ is the left adjoint to the inclusion functor. It is clear that $\D$ is again a locally small $n$-category. Moreover, since a weak colimit in $\D$ can be computed by applying the left adjoint $L$ to a weak colimit in $\C$ (Proposition \ref{adjoint_preserve_weak_co_limits}), it follows that $\D$ is also weakly $n$-cocomplete. Therefore it remains to prove that $\D$ satisfies Brown representability. Given a functor $F \colon \D^{op} \to \Sn$ that satisfies (B1)--(B2), the composite functor $\C^{\op} \stackrel{L^{\op}}{\to} \D^{op} \stackrel{F}{\to} \Sn$ also satisfies (B1)--(B2) (again by Proposition \ref{adjoint_preserve_weak_co_limits}). Since $\C$ satisfies Brown representability by assumption, the functor $F \circ L^{\op}$ is representable by some object $x \in \C$. Moreover, the composite functor $F \circ L^{\op}$ obviously sends $L$-equivalences to equivalences. This implies that $x$ is $L$-local, therefore $x \simeq i L(x)$. Thus, $F$ is representable by the object $L(x) \in \D$.   
\end{proof}

\subsection{Compactly generated $n$-categories} \label{subsec:compactly_generated} In this subsection, we introduce the class of \emph{compactly generated n-categories} 
and prove that these satisfy Brown representability. 
Compactly generated $n$-categories define a convenient general context for a Brown representability theorem for abstract $n$-categories, that is, for $n$-categories which do not necessarily arise as homotopy $n$-categories. The definition of a compactly generated $n$-category is inspired by related definitions from the various classical Brown representability contexts 
for ordinary categories \cite{He, Ne} or for homotopy ($1$-)categories of $\infty$-categories with special properties \cite{HA, NRS}.  

\begin{definition} \label{weak_gen_def}
Let $\C$ be a locally small $\infty$-category. A set of \emph{weak generators} of $\C$ is a (small) set of objects $\mathcal{G}$ that jointly detect equivalences, i.e., a morphism $f \colon x\to y$ in $\C$ is an equivalence if and only if the 
canonical morphism
$$
\xymatrix{
\map_{\C}(g,x) \ar[r]^{f_*} & \map_{\C}(g,y)
}
$$
is an equivalence for every object $g \in \mathcal{G}$. 
\end{definition}

\begin{remark} 
Some expository comments about the terminology may be useful here. First, weak generators should not be confused with the strictly stronger notion which refers to a set of objects that generate $\C$ under filtered colimits. 

Second, in the case of ordinary categories, it differs also from the familiar notion of a set of objects which distinguish parallel arrows. The latter property follows from the property in Definition \ref{weak_gen_def} when $\C$ admits equalizers.

Third, the related notion of a set of objects that detect whether the canonical morphism to the terminal object $(x \to *)$ is an equivalence is strictly weaker in general. For example, the collection of spheres $\{S^n\}_{n \geq 0}$ is not a set of weak generators in the 
homotopy category of spaces \cite{He, AC}, but the spheres obviously detect whether a space is weakly contractible. These two properties of a set of objects are equivalent for the homotopy $n$-category of a stable $\infty$-category. This uses the fact that a morphism in a stable $\infty$-category is an equivalence if and only if its (co)fiber is a zero object. 

Finally, the definition of weak generators in an $\infty$-category $\C$ used in \cite[Definition 5.2.1]{NRS} corresponds to the definition of weak generators in $\h(\C)$ in the sense of Definition \ref{weak_gen_def}. We recall that the Brown representability context of \cite{NRS} presupposed an underlying locally small cocomplete $\infty$-category, whereas now we do not assume that our candidate locally small $n$-category $\C$ arises as the homotopy $n$-category of a locally small cocomplete $\infty$-category.
\end{remark}

\begin{proposition} \label{weak_gen_characterization}
Let $\C$ be a locally small $\infty$-category which admits finite colimits. The following are equivalent:
\begin{itemize}
\item[(a)] $\C$ has a set of weak generators. 
\item[(b)] $\h_n \C$ has a set of weak generators for some $n \geq 1$.
\item[(c)] $\h_n \C$ has a set of weak generators for every $n \geq 2$.
\item[(d)] $\h_2 \C$ has a set of weak generators.
\end{itemize}
If $\C$ is stable, then (a)--(d) are also equivalent to:
\begin{itemize}
\item[(e)] $\h(\C)$ has a set of weak generators.
\end{itemize} 
\end{proposition}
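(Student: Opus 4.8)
The plan is to establish the cycle (a) $\Rightarrow$ (c) $\Rightarrow$ (b) $\Rightarrow$ (a), together with (a) $\Leftrightarrow$ (d) in the stable case, and to isolate the single substantive implication (a) $\Rightarrow$ (c). The organizing observation is that a morphism of $\C$ is an equivalence precisely when its image in $\h\C$ is an isomorphism, and likewise a morphism of $\h_n\C$ is invertible precisely when its image in $\h(\h_n\C) = \h\C$ is an isomorphism; thus ``equivalence in $\C$'' and ``equivalence in $\h_n\C$'' impose the same condition on the underlying morphism. Since $\map_{\h_n\C}(g,-) \simeq \tau_{\leq n-1}\map_{\C}(g,-)$, the hypothesis of the detection property in $\h_n\C$ (that $\tau_{\leq n-1}f_\ast$ be an equivalence for all $g$) is weaker than the corresponding hypothesis in $\C$. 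Consequently any set that detects equivalences in $\h_n\C$ detects them in $\C$ with the same objects, which gives (b) $\Rightarrow$ (a) (and (d) $\Rightarrow$ (a)) immediately, while (c) $\Rightarrow$ (b) is the case $n=2$.

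For (a) $\Rightarrow$ (c), suppose $\mathcal{G}$ is a set of weak generators of $\C$. Using that $\C$ admits finite colimits, for each $g \in \mathcal{G}$ and each $k \geq 0$ I would form the copower $S^k \otimes g$, where $S^k$ is a finite simplicial model of the $k$-sphere (with $S^0$ two points); this is a finite colimit, hence exists, and satisfies $\map_{\C}(S^k \otimes g, x) \simeq \Map(S^k, \map_{\C}(g,x))$ naturally in $x$. Setting $\mathcal{G}' = \{\, S^k \otimes g : g \in \mathcal{G},\ k \geq 0 \,\}$, which is again small, the reduction above shows it suffices to prove: if $f_\ast$ is a $\tau_{\leq 1}$-equivalence on every object of $\mathcal{G}'$ (i.e. induces isomorphisms on $\pi_0$ and on $\pi_1$ at all basepoints), then $f_\ast \colon \map_{\C}(g,x) \to \map_{\C}(g,y)$ is an equivalence for every $g \in \mathcal{G}$; weak generation of $\mathcal{G}$ in $\C$ then forces $f$ to be an equivalence, hence an equivalence in $\h_2\C$.

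The heart of the matter, and what I expect to be the main obstacle, is the purely homotopy-theoretic lemma that a map of spaces $\phi \colon A \to B$ is an equivalence if and only if $\Map(S^k, \phi)$ induces isomorphisms on $\pi_0$ and on $\pi_1$ (at all basepoints) for every $k \geq 0$. The proof I have in mind uses the evaluation fibration $\Omega^k A \to \Map(S^k, A) \to A$, which is split by the constant maps; taking as basepoint a constant map $c_a$ yields a natural split short exact sequence $1 \to \pi_{k+1}(A,a) \to \pi_1(\Map(S^k,A), c_a) \to \pi_1(A,a) \to 1$. From $k=0$, where $S^0 \otimes g \simeq g \sqcup g$ and $\Map(S^0, A) \simeq A \times A$, one recovers the isomorphisms on $\pi_0$ and $\pi_1$ of $A$ itself; feeding these together with the isomorphism on $\pi_1(\Map(S^k,A), c_a)$ into the five lemma extracts an isomorphism on $\pi_{k+1}(A,a)$ for every $a$, and letting $k$ range over all nonnegative integers recovers isomorphisms on every homotopy group at every basepoint. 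The delicate points are the naturality of the splitting and the careful bookkeeping of basepoints; this is also exactly why the threshold $n=2$ rather than $n=1$ is needed, since a single level of $\pi_1$ is what converts the $k$-fold loop information back into an honest homotopy group of $A$.

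Finally, for the stable refinement the implication (d) $\Rightarrow$ (a) is again the easy reduction (the case $n=1$). For (a) $\Rightarrow$ (d) I would exploit that in a stable $\infty$-category the suspension is invertible and each $\map_{\C}(g,x)$ is an infinite loop space, so that $\map_{\C}(\Sigma g, x) \simeq \Omega\,\map_{\C}(g,x)$ gives natural isomorphisms $[\Sigma^k g, x] \cong \pi_k\,\map_{\C}(g,x)$ for all $k \geq 0$. Taking $\mathcal{G}' = \{\, \Sigma^k g : g \in \mathcal{G},\ k \geq 0 \,\}$, a morphism inducing a bijection $[\Sigma^k g, x] \to [\Sigma^k g, y]$ for every $k$ is automatically an isomorphism on every $\pi_k\,\map_{\C}(g,-)$; since these mapping spaces are simple, $f_\ast$ is then an equivalence for each $g \in \mathcal{G}$, and weak generation of $\mathcal{G}$ in $\C$ concludes. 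The role of stability is precisely to trade the $\pi_1$-information of the general argument for an extra desuspension, collapsing the threshold from $n=2$ back to $n=1$.
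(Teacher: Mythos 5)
Your proof is correct and follows the paper's architecture exactly: the implications (d) $\Rightarrow$ (c) $\Rightarrow$ (b) $\Rightarrow$ (a) via the observation that truncation only weakens the detection hypothesis while $\C \to \h_n\C$ is conservative, then the copower trick enlarging $\mathcal{G}$ to $\mathcal{G}' = \{S^k \otimes g\}$ so that $\map_{\h_2\C}(S^k \otimes g, x) \simeq \tau_{\leq 1}\Map\bigl(S^k, \map_{\C}(g,x)\bigr)$, reducing (a) $\Rightarrow$ (c) to the statement that spheres detect equivalences of spaces through $\pi_0$ and $\pi_1$ of $\Map(S^k,-)$. The one genuine difference is that at this key step the paper simply cites Arlin--Christensen \cite{AC} for the fact that $\{S^k\}_{k \geq 0}$ is a set of weak generators in $\h_2\S$, whereas you prove that input from scratch via the evaluation fibration $\Omega^k_a A \to \Map(S^k,A) \xrightarrow{\mathrm{ev}} A$ split by constant maps, extracting $\pi_{k+1}(A,a)$ from the split short exact sequence $1 \to \pi_{k+1}(A,a) \to \pi_1(\Map(S^k,A),c_a) \to \pi_1(A,a) \to 1$ by the five lemma; this argument is sound (the section kills both connecting maps, and $f_*$ respects evaluation and constants, so the sequences map compatibly), and it makes the proposition self-contained at the cost of the basepoint bookkeeping you flag. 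Your stable case likewise matches the paper's one-line argument, with the small caveat that ``simple'' alone is not quite the transport mechanism between components: one should invoke the grouplike H-space (infinite loop) structure of $\map_{\C}(g,x)$ and the fact that $f_*$ is an H-map to upgrade isomorphisms on $\pi_k$ at the zero component, i.e.\ on $[\Sigma^k g, x]$, plus the $\pi_0$-bijection to a genuine equivalence.
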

\begin{proof}
The implications (e) $\Rightarrow$ (d) $\Rightarrow$ (c) $\Rightarrow$ (b) $\Rightarrow$ (a) are obvious. (a) $\Rightarrow$ (d): Let $\mathcal{G}$ denote a set of weak generators in $\C$. We may assume that $\mathcal{G}$ is closed under tensoring of objects with $S^k$ for any $k \geq 0$. Let $f \colon x \to y$ be a morphism in $\C$ such that the map
$$
\map_{\h_2 \C}(g,x) \xrightarrow{f_*} \map_{\h_2 \C}(g,y)
$$
is an equivalence for every object $g \in \mathcal{G}$. Since $\mathcal{G}$ is closed under tensoring with $S^k$, it follows that the (horizontal) map 
$$
\xymatrix{
\map_{\h_2 \C}(g \otimes S^k, x) \ar[r]^{f_*}_{\simeq} \ar[d]_{\simeq} & \map_{\h_2 \C}(g \otimes S^k, y) \ar[d]^{\simeq} \\
\map_{\h_2\S}(S^k, \map_{\C}(g, x)) \ar[r]^{f_*} & \map_{\h_2\S}(S^k, \map_{\C}(g, y)) 
}
$$
is an equivalence for any $g \in \mathcal{G}$ and $k \geq 0$. (Here we have used the notation $S^k$ for the associated $\infty$-groupoid in $\S$.) The collection of spheres $\{S^k\}_{k \geq 0}$ defines a set of weak generators in $\h_2 \S$ \cite{AC}. So the map
$$
\map_{\C}(g,x) \xrightarrow{f_*} \map_{\C}(g,y)
$$
is an equivalence for every object $g \in \mathcal{G}$, and this implies that $f$ is an equivalence. This completes the proof of (a) $\Rightarrow$ (d). 
An analogous argument shows (a) $\Rightarrow$ (e): if $\C$ is stable, the spheres $\{S^k\}_{k \geq 0}$ detect already in $\h(\S)$ whether the map 
of (infinite loop) spaces 
$$
\map_{\C}(g,x) \xrightarrow{f_*} \map_{\C}(g,y)
$$
is an equivalence, because the components of the mapping spaces are simple in this case. 
\end{proof}

Next we consider a flexible and general notion of compactness for objects in an $\infty$-category. Recall that $\N$ denotes the $\infty$-category associated with the poset of non-negative integers with its usual ordering. 

\begin{definition} \label{compact} 
Let $\C$ be a locally small $\infty$-category which admits weak $\N$-colimits of order $(-1)$ and suppose that every diagram $T \colon \N \to \C$ is equipped with a \emph{distinguished} cone $T^{\triangleright} \colon \N^{\triangleright} \to \C$ with cone object $\colim^w T$. An object $x \in \C$ is \emph{compact} (with respect to these distinguished cones) if for every diagram $T \colon \N \to \C$,  the canonical map
$$
\colim_{i \in \N}\map_{\C}(x, T(i))\to \map_{\C}(x, \colim^w T)
$$
is an equivalence.
\end{definition}

The distinguished cones will normally be given by weak colimits of order $t \geq 0$. This choice is unique up to (non-canonical) equivalence when $t >0$. For $t=0$, compactness is to be understood as in \cite{He}, that is, it is defined in terms of weak $\N$-colimits $T^{\triangleright}$ (of order $0$) which have been chosen and fixed once and for all in advance. Definition \ref{compact} generalizes this notion to $\infty$-categories which are equipped with a choice of a \emph{distinguished} cone for each $\N$-diagram. 

\begin{example}  \label{finitely_pres_objects}
Let $\C$ be a locally small $\infty$-category which admits filtered colimits. An object $x \in \C$ is compact (in the sense of Definition \ref{compact}) 
if $x \in \C$ is finitely presentable (also called compact in \cite{HTT}), that is, if the representable functor $\map_{\C}(x, -)$ preserves filtered colimits. In this case, the distinguished cones of Definition \ref{compact} are taken to be the $\N$-colimits. In addition, $x \in \h_n \C$ is also compact (in the sense of Definition \ref{compact}), where the distiguished cones are taken to be the weak $\N$-colimits (of order $(n-1)$) that arise from lifting each $\N$-diagram to $\C$ and taking the $\N$-colimit in $\C$. 
\end{example}

\begin{remark}
The above definition of compactness differs from the notion of $h$-compactness in \cite[5.2]{NRS} which applied to locally small $\infty$-categories with $\N$-colimits and their homotopy ($1$-)categories. The present definition generalizes to $\infty$-categories the corresponding definition in \cite{He} (restricted here to 
$\N$-diagrams).
\end{remark}  

Note that this notion of compactness can be used in the context of weakly cocomplete $n$-categories because these admit weak $\N$-colimits of order $(n-1)$. In particular, these weak colimits are unique up to (non-canonical) equivalence when $n>1$. (To see this, first recall that the ordinary category $\N$ is Joyal equivalent to the one-dimensional simplicial set given by its spine $\mathrm{Spine}_{\infty}$, and then conclude by applying \cite[Proposition 3.10]{Ra} to the case of the skeletal decomposition of $\mathrm{Spine}_{\infty}$ or by applying directly Proposition \ref{weakly_cocomplete_colimits}.) In fact, this is the main reason why we restrict to  compactness only with respect to $\N$-diagrams instead of considering more general diagrams indexed by larger ordinals (cf. \cite{He}). Thus, we may make the following assumption concerning compactness in a weakly cocomplete $n$-category.

\smallskip

\noindent \textbf{Assumption.} When $n>1$, compactness in a weakly cocomplete $n$-category is defined unambiguously with respect to the weak $\N$-colimits of order $(n-1)$. When $n=1$, the notion will generally depend on a choice of weak $\N$-colimits of order $0$, which is tacitly assumed.

\begin{definition}(\emph{Compactly generated $n$-category}) \label{compactly_gen_def} Let $n \geq 1$ be an integer or $n=\infty$. 
A locally small $n$-category $\C$ is called \emph{compactly generated} if $\C$ is a weakly cocomplete $n$-category and has a set of weak generators $\mathcal{G}$ consisting of compact objects. 
\end{definition}

\begin{remark} \label{clarify_terms}
The terminology of Definition \ref{compactly_gen_def} slightly clashes with our definition of a compactly generated $\infty$-category in \cite{NRS}. The definition in \cite[5.2]{NRS} specifies a class of locally small cocomplete $\infty$-categories in terms of the properties of their homotopy categories, whereas Definition \ref{compactly_gen_def} does not presuppose that $\C$ arises as the homotopy $n$-category of a cocomplete $\infty$-category. The two definitions are related simply as follows: given a compactly generated $\infty$-category $\C$ in the sense of \cite{NRS}, then $\h(\C)$ is a compactly generated $1$-category in the sense of Definition \ref{compactly_gen_def}.
\end{remark}

Here are some of the main examples of compactly generated $n$-categories. 

%\begin{example} \label{loc_pres_cat_example} Let $\C$ be a locally small $n$-category which admits small colimits. Then $\C$ is compactly generated if $\C$ has a set of finitely presentable objects which jointly detect equivalences in $\C$ (equivalently, if $\C$ is a finitely presentable $n$-category).
%\end{example}

\begin{example} \label{spaces_example}
A finitely presentable $\infty$-category admits a set of weak generators given by the finitely presentable objects. Thus, for any $n \geq 2$, the homotopy $n$-category of a finitely presentable $\infty$-category is compactly generated (as an $n$-category). This follows from Proposition \ref{weak_gen_characterization} and Examples \ref{weakly_cocomplete_example} and \ref{finitely_pres_objects}. Note that this class of examples includes also the ordinary locally finitely presentable categories. Also, for any $n \geq 2$, the homotopy $n$-category of the $\infty$-category of spaces $\mathcal{S}$ is compactly generated and the set of spheres $\{S^k\}_{k \geq 0}$ defines a set of compact weak generators (see \cite{AC}). More generally, for any small simplicial set $K$ and $n \geq 2$, the homotopy $n$-category of the $\infty$-category $\mathcal{S}^K$ is also a compactly generated $n$-category. 
\end{example}

\begin{example} \label{spectra_example}
The homotopy $n$-category of a finitely presentable stable $\infty$-category is a compactly generated $n$-category for any $n \geq 1$ -- again this follows from Proposition \ref{weak_gen_characterization} and Examples \ref{weakly_cocomplete_example} and \ref{finitely_pres_objects}. In particular, the homotopy $n$-category of the  stable $\infty$-category of spectra $\mathcal{S}p$ is compactly generated for any $n \geq 1$. More generally, for any small simplicial set $K$ and $n \geq 1$, the homotopy $n$-category of the $\infty$-category $\mathcal{S}p^K$ is also a compactly generated $n$-category  (cf. \cite[Example 5.2.5]{NRS}). 
\end{example}

The following result is our main Brown representability theorem in this section. This result is a generalization to $n$-categories of Heller's Brown representability theorem for ordinary categories \cite{He}, restricted to the compactly generated case.  It also generalizes the Brown representability 
theorem for $(2,1)$-categories (or categories enriched in groupoids) that was proved in \cite{Ca}. 

\begin{theorem} \label{higher_Brown}
Let $\C$ be a compactly generated $n$-category, where $n \geq 1$ is an integer or $n=\infty$. Then $\C$ satisfies Brown representability.
\end{theorem}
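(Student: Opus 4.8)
The plan is to follow Brown's original strategy, adapted to the weak--colimit setting, and to arrange matters so that the final passage to a genuine universal element is handled by the dual of Criterion B (Lemma \ref{criterionB}). By the Yoneda lemma, representability of $F$ amounts to producing an object $c \in \C$ together with a point $u \in F(c)$ such that the induced natural transformation $\eta \colon \map_{\C}(-,c) \to F$, $\eta_Y(\phi) = F(\phi)(u)$, is an equivalence for every $Y \in \C$. Equivalently, writing $\int_{\C} F \to \C$ for the right fibration classified by $F$ (whose objects are pairs $(Y,s)$ with $s \in F(Y)$ and whose morphisms $(Y,s)\to(Y',s')$ are maps $\phi\colon Y\to Y'$ with $F(\phi)(s')\simeq s$), one must show that $\int_{\C} F$ has a terminal object. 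Since $F$ takes values in $\Sn$, a map in the target is an equivalence precisely when it is $n$-connected, whereas condition (B2) only guarantees $(n-1)$-connectedness; bridging this one--step gap is the source of the technical difficulty. Throughout I fix a set $\mathcal{G}$ of compact weak generators.

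First I would record that $\int_{\C} F$ is a locally small weakly cocomplete $n$-category: its fibers $F(Y) \in \Sn$ are $(n-1)$-truncated, so it is an $n$-category; condition (B1) identifies coproducts in $\int_{\C} F$ with those of $\C$ (the element on $\coprod_i Y_i$ being the tuple $(s_i) \in \prod_i F(Y_i) \simeq F(\coprod_i Y_i)$); and (B2) lets one lift any pushout span to a weak pushout of order $(n-1)$ in $\int_{\C} F$ by extending the chosen element along the $(n-1)$-connected comparison map. In particular $\int_{\C} F$ has small coproducts, weak pushouts of order $(n-2)$, and, by Proposition \ref{weakly_cocomplete_colimits} applied to the one--dimensional $\N$ (cf. Corollary \ref{weakly_initial_vs_initial}), weak $\N$-colimits of order $1$ as soon as $n \geq 2$. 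None of this uses the generators or their compactness yet.

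The heart of the argument, which I expect to be the main obstacle, is the construction of a pair $(c,u)$ for which $\eta_G \colon \map_{\C}(G,c) \to F(G)$ is an \emph{equivalence} for every $G \in \mathcal{G}$. I would build an $\N$-sequence $c_0 \to c_1 \to \cdots$ with compatible elements $u_i \in F(c_i)$: at stage $0$ one takes $c_0 = \coprod_{G \in \mathcal{G}} \coprod_{s} G$, a coproduct over representatives of the homotopy of $F(G)$, and uses (B1) to assemble $u_0$ so that $\eta_{0,G}$ is surjective on $\pi_0 F(G)$; at each later stage one attaches cells by forming a single large weak pushout of order $(n-1)$ that, for every generator, both kills discrepancies between pairs of maps $G \to c_i$ carrying a chosen homotopy in $F(G)$ and builds up the missing higher homotopy of $F(G)$, extending $u_i$ to $u_{i+1}$ by (B2) exactly as in the lifting computation above. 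Setting $c = \colim^{w} c_i$ (a weak $\N$-colimit of order $(n-1)$) and invoking compactness of each $G$ to identify $\map_{\C}(G,c) \simeq \colim_i \map_{\C}(G,c_i)$ (Definition \ref{compact}), one verifies that $\eta_G$ is an equivalence. The delicate point is precisely the connectivity bookkeeping: a single weak pushout improves matters only by the $(n-1)$-connectivity afforded by (B2), so one must proceed through the Postnikov filtration of $F$ — realizing $\pi_0$, then correcting $\pi_1, \dots, \pi_{n-1}$ in turn — and check that iterating over the whole sequence, together with the colimit formula for compact objects, exactly closes the gap between $(n-1)$-connected and an equivalence. (For $n = \infty$ the weak pushouts are genuine pushouts and (B2) is already an equivalence, so this reduces to the classical homotopy--colimit argument; for $n = 1$ it reduces to Heller's construction.)

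Finally, Phase two uses the defining property of $\mathcal{G}$ — detection of equivalences (Definition \ref{weak_gen_def}) — to spread the result to all objects and thereby exhibit $(c,u)$ as a weakly terminal object of $\int_{\C} F$. Given an arbitrary $(Y,s)$, I apply the Phase one construction to the pair $(c \sqcup Y,(u,s))$ to obtain $(c',u')$ with $\eta'_G$ an equivalence on generators and a structure map $\kappa \colon c \sqcup Y \to c'$ with $\kappa^\ast u' \simeq (u,s)$. By naturality the composite $c \to c \sqcup Y \xrightarrow{\kappa} c'$ induces equivalences $\map_{\C}(G,c) \xrightarrow{\ \simeq\ } \map_{\C}(G,c')$ for all $G \in \mathcal{G}$, whence $c \to c'$ is an equivalence in $\C$ because $\mathcal{G}$ detects equivalences; composing $Y \to c \sqcup Y \to c'$ with its inverse yields a map $Y \to c$ realizing $s$. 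Thus $\{(c,u)\}$ is a (small) weakly terminal set in $\int_{\C} F$. Since $\int_{\C} F$ is locally small, weakly cocomplete, and (for $n \geq 2$) has weak $\N$-colimits of order $1$, the dual of Criterion B (Lemma \ref{criterionB}(2)) then promotes this weakly terminal object to a genuine terminal object, that is, to a universal element, so that $F$ is representable. The case $n = 1$ is handled separately by Heller's classical argument. Note the clean division of labour: compactness is used only to compute $\map_{\C}(G,c)$ in Phase one, while detection of equivalences is used only in the spreading step of Phase two.
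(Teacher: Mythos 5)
Your Phases one and two are, in substance, Steps 1 and 2 of the paper's Criterion C (Lemma \ref{criterion_C}), transported to the category of elements: the paper likewise attaches cells $g \otimes \partial\Delta^m \to g \otimes \Delta^m$ along an $\N$-sequence, uses compactness of the generators to compute mapping spaces out of the (distinguished) weak $\N$-colimit, and uses a coproduct trick to compare the resulting ``$\mathcal{G}$-terminal'' objects. One local slip first: (B2) does \emph{not} lift a pushout span to a weak pushout of order $(n-1)$ in $\int_{\C} F$. As the paper stresses in its verification for $F_{*/}$, the fiberwise comparison of mapping spaces loses one level of connectivity, so the lifted square is only a weak pushout of order $(n-2)$ -- which is in fact what you assert one sentence later, but the inconsistency matters below.

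The genuine gap is your final upgrade from ``weakly terminal'' to ``terminal.'' You invoke the dual of Lemma \ref{criterionB}(2), which requires weak $\N$-colimits of order $1$ in $\int_{\C} F$, and you claim these exist ``as soon as $n \geq 2$'' via Proposition \ref{weakly_cocomplete_colimits}. That proposition presupposes weak pushouts of order $(n-1)$; since $\int_{\C} F$ only has them of order $(n-2)$, the skeletal induction (cf.\ Example \ref{example_CriterionA}, Corollary \ref{weakly_initial_vs_initial}) yields weak $\N$-colimits only of order $(n-2)$, and the telescope construction via (B1)--(B2) gives the same bound. So your route needs $n \geq 3$, leaving the case $n = 2$ -- one of the headline applications (homotopy $2$-categories of presentable $\infty$-categories) -- unproven, while $n = 1$ is outsourced to Heller. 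Nor is this a repairable bookkeeping issue within Criterion B: the paper's Example \ref{counterexample_lower_n} exhibits a locally small $2$-category with products and weak pullbacks of order $0 = n-2$ in which every object is hypoinitial yet no initial object exists, so an abstract weakly-terminal-set argument genuinely cannot close the gap at $n = 2$. The missing idea is Step 3 of Criterion C: having produced $(c,u)$ with $\eta_G$ an equivalence for all $G \in \mathcal{G}$, one shows \emph{directly} that $(c,u)$ is terminal by coning off an arbitrary map $\partial\Delta^m \to \map((Y,s),(c,u))$ -- encode it as a diagram on an $(m+1)$-dimensional simplicial set $K$, take a weak $K$-colimit of order $(n-1)-\dim(K)$, map its cone object to a $\mathcal{G}$-terminal object, and observe that the resulting equivalence out of $c$ forces the original map to be homotopically constant. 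This uses the generators and the graded weak colimits a second time, works uniformly for all $1 \leq n \leq \infty$, and removes the appeal to Heller for $n=1$ as well.
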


The proof of this theorem is based on (the dual of the) following criterion for the existence of terminal objects. The proof of this criterion is somewhat long and technical, but it is essentially a generalization of familiar arguments from the classical proof of the Brown representability theorem.  

\begin{lemma}[Criterion C] \label{criterion_C}
Let $\C$ be a locally small $n$-category, where $n \geq 1$ is an integer or $n=\infty$. Suppose that $\C$ admits small coproducts and weak pushouts  
of order $(n-2)$. In addition, suppose that $\C$ has a set of weak generators $\mathcal{G}$ which consists of compact objects (with respect to a choice of distinguished cones as in Definition \ref{compact}). Then $\C$ 
has a terminal object. 
\end{lemma}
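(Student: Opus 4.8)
The plan is to construct the terminal object as a sequential weak colimit $W = \colim^{w}_{i \in \N} W_i$ of a tower $W_0 \to W_1 \to \cdots$ built so as to force the mapping spaces out of the compact generators to become contractible. Since $\mathcal{G}$ consists of compact objects, the distinguished cone exhibiting $W$ as a weak $\N$-colimit induces, for every $g \in \mathcal{G}$, an equivalence
$$\colim_{i \in \N} \map_{\C}(g, W_i) \xrightarrow{\ \simeq\ } \map_{\C}(g, W),$$
so it suffices to arrange that the left-hand colimit is contractible for each $g$. As the mapping spaces in the $n$-category $\C$ are $(n-1)$-truncated, contractibility amounts to killing $\pi_k$ for $0 \le k \le n-1$ together with non-emptiness and connectedness; and since homotopy groups commute with sequential colimits, it is enough that every homotopy class appearing at a finite stage be annihilated at a later stage.

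The tower is defined by an inductive cell-attachment procedure. For the base I take $W_0 = \coprod_{g \in \mathcal{G}} g$, so that $\map_{\C}(g, W_0)$ is non-empty for every generator (via the summand inclusion), a property preserved along the tower. Given $W_i$, I enumerate the ``defects'': triples $(g, k, \alpha)$ with $g \in \mathcal{G}$, $0 \le k \le n-1$, and $\alpha$ a homotopy class in $\pi_k \map_{\C}(g, W_i)$ based at some vertex. Each such $\alpha$ is realized, up to the available connectivity, by a morphism out of the object $g \odot S^k$ obtained as a weak $S^k$-colimit of the constant diagram at $g$; here Proposition \ref{characterization_weak_colimit} applies because $\dim S^k = k$ and the hypothesis provides weak $S^k$-colimits of order $(n-1)-k \ge 0$. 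I then form $W_{i+1}$ as a single weak pushout of
$$\textstyle \coprod_{(g,k,\alpha)} g \longleftarrow \coprod_{(g,k,\alpha)} (g \odot S^k) \longrightarrow W_i,$$
where the left map is induced by the collapse $S^k \to \ast$ (so that $g \odot S^k \to g \odot \ast \simeq g$) and the right map assembles the chosen realizations. This weak pushout exists with order $(n-1)-1 = n-2$ by hypothesis, and the connectivity estimate of Proposition \ref{characterization_weak_colimit} ensures that each $\alpha$ is sent to a trivial class in $\map_{\C}(g, W_{i+1})$. Organizing the enumeration so that every class occurring at any stage is revisited cofinally, every homotopy class of every $\map_{\C}(g, W_i)$ dies at a later stage, whence $\colim_i \map_{\C}(g, W_i) \simeq \ast$ and $\map_{\C}(g, W) \simeq \ast$ for all $g \in \mathcal{G}$.

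It remains to promote contractibility on generators to terminality, and here the detecting property of $\mathcal{G}$ enters decisively. For an arbitrary object $y$, I build (by the same type of cell-attachment, now mapping towards $y$) a tower of coproducts of generators whose successive weak pushouts produce a weak colimit $\widetilde{y}$ equipped with a map $\widetilde{y} \to y$ inducing an equivalence $\map_{\C}(g, \widetilde{y}) \simeq \map_{\C}(g, y)$ for every $g \in \mathcal{G}$; since $\mathcal{G}$ detects equivalences, $\widetilde{y} \to y$ is itself an equivalence. Then $\map_{\C}(y, W) \simeq \map_{\C}(\widetilde{y}, W)$, and evaluating $\map_{\C}(-, W)$ on the defining tower of $\widetilde{y}$ turns each coproduct of generators into a product of copies of $\map_{\C}(g, W) \simeq \ast$, hence into a contractible space; the weak colimit comparison of Proposition \ref{characterization_weak_colimit}, together with the identification above, then forces $\map_{\C}(y, W)$ to be $(n-1)$-connected, so contractible because it is $(n-1)$-truncated. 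Thus $\map_{\C}(y, W) \simeq \ast$ for every $y$ and $W$ is terminal.

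I expect the main obstacle to be the bookkeeping of connectivity orders throughout these two nested inductions. Because the available weak (co)limits have only finite order $n-2$ rather than being genuine colimits, attaching a cell annihilates a class only up to a controlled error, and one must verify that iterating $\omega$-many times and invoking compactness (which upgrades the weak $\N$-colimit to a genuine colimit on mapping spaces out of generators) exactly cancels these errors; similarly, in the passage from generators to arbitrary $y$, one must ensure that the cellular approximation converges and that the connectivity gains along the weak pushouts accumulate to the full $(n-1)$-connectivity needed for truncation. Keeping these estimates consistent — and in particular checking that the hypothesis of weak $K$-colimits of order $(n-1)-\dim(K)$ is exactly what makes each attachment step admissible — is the technical heart of the argument.
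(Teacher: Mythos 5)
Your first stage is essentially Step 1 of the paper's proof and is sound: starting from a base object, one attaches cells of the form $g \otimes \partial\Delta^m \to g \otimes \Delta^m$ ($g \in \mathcal{G}$, $m \leq n$) via weak pushouts, takes the distinguished $\N$-cone, and uses compactness to identify $\map_{\C}(g, W)$ with $\colim_i \map_{\C}(g, W_i)$. One remark: the ``controlled error'' you worry about in the killing step does not actually occur, because the cone datum of the weak pushout itself provides, on the nose, the factorization of $g \otimes \partial\Delta^m \to W_{i+1}$ through the collapse to $g$; the only order requirement is that $g \otimes \partial\Delta^m$ be a weak colimit of order $\geq 0$ so that classes are \emph{realized} by morphisms, and the hypothesis supplies exactly this for $m \leq n$, which suffices since mapping spaces in $\C$ are $(n-1)$-truncated. (The paper also runs this construction from an \emph{arbitrary} object $c$, producing a morphism $u_c \colon c \to x$ to a $\mathcal{G}$-terminal object; this extra generality is needed below.)

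The genuine gap is in your second stage, and it sits precisely at the top dimension, in two places. First, the cellular approximation $\widetilde{y} \stackrel{\simeq}{\to} y$ cannot be built as described: to extend a map $\widetilde{y}_i \to y$ over the weak pushout $\widetilde{y}_{i+1}$ you must choose, for each class $\alpha \in \pi_{n-1}\map_{\C}(g, \widetilde{y}_i)$ dying in $\map_{\C}(g,y)$, a factorization of the composite $g \odot S^{n-1} \to \widetilde{y}_i \to y$ through the collapse $g \odot S^{n-1} \to g$. But $g \odot S^{n-1}$ is a weak colimit of order $(n-1)-(n-1)=0$ only, so by Proposition \ref{characterization_weak_colimit}(c) the comparison $\pi_0 \map_{\C}(g \odot S^{n-1}, y) \to [S^{n-1}, \map_{\C}(g,y)]$ is merely surjective, not injective; nullhomotopy of the class does not yield the required factorization of the \emph{morphism}, and the map to $y$ may fail to extend. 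Second, even granting $\widetilde{y} \simeq y$, evaluating $\map_{\C}(-,W)$ along the tower only gives comparison maps that are $(n-2)$-connected (the weak pushouts and the $\N$-cone have order at most $n-2$ here, $\N$ being equivalent to a $1$-dimensional spine), and an $(n-2)$-connected, $(n-1)$-truncated space can have arbitrary $\pi_{n-1}$; so your conclusion ``$(n-1)$-connected, hence contractible'' does not follow --- exactly $\pi_{n-1}\map_{\C}(y,W)$ escapes. This order deficit is intrinsic and cannot be fixed by bookkeeping. The paper avoids it entirely: it first shows (Step 2, using coproducts and Step 1 applied to $x \sqcup y$) that $\mathcal{G}$-terminal objects are unique, and then (Step 3) encodes a test map $\partial\Delta^m \to \map_{\C}(c,x)$ as a diagram on the two-vertex simplicial set $K = (\partial\Delta^m \times \Delta^1) \cup_{\partial\Delta^m \times \partial\Delta^1} (\Delta^0 \times \partial\Delta^1)$, cones it off by a weak colimit of order $n-m-1$ (mere \emph{existence} of the cone suffices at $m=n$), and maps the cone object to a $\mathcal{G}$-terminal $w$ via Step 1; the composite $x \to w$ is then an equivalence, and the test sphere becomes constant in $\map_{\C}(c,w)$ because it factors through the contractible space $\map_{K^{\triangleright}}(0,\ast)$. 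Some device of this kind, transporting the test sphere into a cone where it visibly bounds rather than resolving $y$ cellularly, is what your argument is missing.
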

\begin{proof}
For each $g \in \mathcal{G}$ and $0 \leq m \leq n$, let $c^{\partial}_g \colon \partial \Delta^m \to \C$ and $c_g \colon \Delta^m \to \C$ denote the constant diagrams at $g \in \C$. By our assumptions on $\C$ and Proposition \ref{weakly_cocomplete_colimits}, there is a weak colimit $C^{\partial}_g$ (resp. $C_g$) of $c^{\partial}_g$ (resp. $c_g$) of order $(n-2)-(m-1) + 1 = (n-m)$, with cone object denoted by $g \otimes \partial \Delta^m$ (resp. $g \otimes \Delta^m$). We may choose $C_g$ to be again 
the constant diagram, in which case $g \otimes \Delta^m$ is equivalent to $g$ and $C_g$ is a colimit of $c_g$. The  cone $C^{\partial}_g$ is unique up to (non-canonical) equivalence for $m < n$. In addition, there are morphisms $C^{\partial}_g \to C_g$ (as cones on diagrams defined on $\partial \Delta^m$), which restrict to morphisms of cone objects, denoted $i^m_g \colon g \otimes \partial \Delta^m \to g \otimes \Delta^m$. These morphisms exist -- and are unique up to homotopy when $m < n$ --, since $C^{\partial}_g$ is a weak colimit of order $\geq 0$ for any $m \leq n$. 

\smallskip

Let $S$ denote the set of morphisms $i_g^m$ for all $g \in \mathcal{G}$ and $0 \leq m \leq n$. 

\smallskip

\noindent \emph{Step 1: Existence of $\mathcal{G}$-terminal objects.} We say that an object $x \in \C$ is $\mathcal{G}$-\emph{terminal} if $\map_{\C}(g, x)$ is contractible for every $g \in \mathcal{G}$. Our first goal is to prove that every object $c \in \C$ admits a morphism $(u_c \colon c \to x)$ to a $\mathcal{G}$-terminal object $x \in \C$. The construction of the morphism $u_c$ is essentially based on a small object argument with respect to the set of morphisms $S$. More precisely, given an object $c \in \C$, we will construct a diagram $x_{\bullet} \colon \N \to \C$ inductively as follows. Set $x_0 = c$. Assuming that $x_{\bullet}$ has been constructed for $\bullet < k$, we define $x_{k-1} \to x_k$ by considering a weak pushout of order $(n-2)$ (this is unique up to equivalence when $n > 2$):
$$
\xymatrix{
\coprod_{m \geq 0} \coprod_{g \in \mathcal{G}} \coprod_{T_{m, g}}  g \otimes \partial \Delta^m \ar[d] \ar[r] & x_{k-1} \ar[d] \\
\coprod_{m \geq 0} \coprod_{g \in \mathcal{G}} \coprod_{T_{m,g}} g \otimes \Delta^m \ar[r] & x_k
}
$$
where $T_{m,g}$ denotes a set of morphisms $(g \otimes \partial \Delta^m \to x_{k-1})$, one from each homotopy class. The top morphism is defined by these morphisms in the obvious way -- this uses that $\C$ admits small coproducts. This morphism suffices in order to extend the diagram $x_{\bullet}$ to all $\bullet \leq k$. Therefore, we obtain by induction the required diagram $x_{\bullet} \colon \N \to \C$. Let $\overline{x}_{\bullet} \colon \N^{\triangleright} \to \C$ be the distinguished cone on $x_{\bullet}$ (Definition \ref{compact}) and let $x_{\infty}$ denote the cone object. Then it suffices to prove that $x_{\infty}$ is $\mathcal{G}$-terminal. By construction, the mapping space $\map_{\C}(g, x_{\infty})$ is non-empty for every $g \in \mathcal{G}$. Therefore, it suffices to prove 
that for every $g \in \mathcal{G}$ and $m \leq n$, any map $\partial \Delta^m \to \map_{\C}(g, x_{\infty})$ is homotopically constant. Note that there is an 
equivalence
$$\colim_{i \in \N} \map_{\C}(g, x_i) \stackrel{\simeq}{\to} \map_{\C}(g, x_{\infty})$$
since $g$ is compact with respect to the distinguished cone $\overline{x}_{\bullet}$. Therefore, a map $$a \colon \partial \Delta^m \to \map_{\C}(g, x_{\infty})$$ factors up to homotopy through a map 
$a_i \colon \partial \Delta^m \to \map_{\C}(g, x_i)$. Since $g \otimes \partial \Delta^m$ is a weak colimit of order $(n-m)$, the map $a_i$ is induced (up to homotopy) by a morphism $\widetilde{a}_i \colon g \otimes \partial \Delta^m \to x_i$ in $T_{m,g}$ -- this morphism $\widetilde{a}_i$ is unique up to homotopy when $m < n$ (see Proposition \ref{characterization_weak_colimit}).  By construction, the composition 
$$g \otimes \partial \Delta^m \stackrel{\widetilde{a}_i}{\to} x_i \to x_{i+1}$$
factors (in a preferred way) through $i_g^m \colon g \otimes \partial \Delta^m \to g \otimes \Delta^m \simeq g$. As a consequence, the composition 
$$\partial \Delta^m \stackrel{a_i}{\to} \map_{\C}(g, x_i) \to \map_{\C}(g, x_{i+1})$$
factors (in a preferred way) through $\partial \Delta^m \subseteq \Delta^m \simeq \ast$. This shows that $x_{\infty}$ is indeed $\mathcal{G}$-terminal, 
and completes the construction of the morphisms $u_c$, for any $c \in \C$. In particular, the argument shows the existence of $\mathcal{G}$-terminal objects in $\C$ (e.g. by taking $c$ to be the initial object of $\C$). 

\smallskip

\noindent \emph{Step 2: Uniqueness of $\mathcal{G}$-terminal objects.} We first note that every morphism $x \to y$ between $\mathcal{G}$-terminal objects in $\C$ is an equivalence. This is an obvious consequence of the fact that $\mathcal{G}$ is a set of weak generators. Now suppose that $x$ and 
$y$ are $\mathcal{G}$-terminal objects in $\C$. Consider the coproduct $z : = x \sqcup y$ in $\C$ and the morphism $u_z \colon z \to w$ to a $\mathcal{G}$-terminal object $w \in \C$, as constructed in Step 1. Then the composite morphisms $(x \to z \to w)$ and $(y \to z \to w)$ are morphisms 
between $\mathcal{G}$-terminal objects, therefore they are equivalences. This shows that $x$ and $y$ are indeed equivalent in $\C$. 

\smallskip 

\noindent \emph{Step 3: $\mathcal{G}$-terminal objects are terminal.} Let $x \in \C$ be a $\mathcal{G}$-terminal object. We claim that the mapping space 
$\map_{\C}(c, x)$ is contractible for any $c \in \C$. Note that the mapping space $\map_{\C}(c, x)$ is non-empty as a consequence of Steps 1 and 2. Then it suffices to show that every map $a \colon \partial \Delta^m \to \map_{\C}(c, x)$, where $m \leq n$, is homotopically constant. It will be convenient to use the model for the mapping space $\map_{\C}(c,x)$ defined by the pullback square:
$$
\xymatrix{
\map_{\C}(c, x) \ar[r] \ar[d] & \C^{\Delta^1} \ar[d] \\ 
\Delta^0 \ar[r]^{(c, x)} & \C \times \C.
}
$$ 
(This has the correct homotopy type when $\C$ is an $\infty$-category by \cite[Corollary 4.2.1.8]{HTT}.) 

Then we may assume that the map $a$ arises from a diagram $\widetilde{a} \colon K \to \C$, where $K$ is defined by the pushout of simplicial sets
$$
\xymatrix{
\partial \Delta^m \times \partial \Delta^1 \ar[r] \ar[d] &  \partial \Delta^m \times \Delta^1 \ar[d] \\
\Delta^0 \times \partial \Delta^1 \ar[r] & K.
}
$$
Note that $K$ has two $0$-simplices, which we denote by $0$ and $1$, and the diagram $\widetilde{a}$ sends $0$ to $c \in \C$ and $1$ to $x \in \C$. The diagram $\widetilde{a}$ admits a weak colimit $\widetilde{a}^{\triangleright} \colon K^{\triangleright} \to \C$ of order $(n-1) - \mathrm{dim}(K) = n-m-1 \geq -1$ (see Proposition \ref{weakly_cocomplete_colimits}). Let $z$ denote the cone object and let $h \colon x \to z$ denote the corresponding morphism to the cone object. Consider a morphism $u_z \colon z \to w$ to a $\mathcal{G}$-terminal object, as constructed in Step 1. By the arguments in Step 2, the composition $(f \colon x \xrightarrow{h} z \xrightarrow{u_z} w)$ is an equivalence. Thus, the map $a$ is homotopically constant if and only if the composition 
$$\partial \Delta^m \stackrel{a}{\to} \map_{\C}(c, x) \xrightarrow{f \circ -} \map_{\C}(c, w)$$
is homotopically constant. Replacing $K$ by an $\infty$-category if so desired, this composite diagram can be identified with the composition
\begin{equation} \label{composition} \tag{*}
\partial \Delta^m \to \map_K(0,1) \stackrel{\widetilde{a}}{\to} \map_{\C}(c, x) \xrightarrow{f \circ -} \map_{\C}(c, w).
\end{equation}
Further, assuming that $K$ is replaced by an $\infty$-category (denoted also by $K$), we see that since $\widetilde{a}^{\triangleright}$ extends $\widetilde{a}$, the composition with the morphism to the cone object $(h \colon x \to z)$ yields a homotopy commutative diagram of spaces:
$$
\xymatrix{
\map_K(0,1) \ar[r]^{\widetilde{a}} \ar[d]_{(1 \to \ast) \circ -} & \map_{\C}(c, x) \ar[d]^{(h \colon x \to z) \circ -} \\
\map_{K^{\triangleright}}(0, *) \ar[r]^{\widetilde{a}^{\triangleright}} & \map_{\C}(c, z). 
}
$$
So the composition \eqref{composition} factors up to homotopy through the contractible mapping space 
$\map_{K^{\triangleright}}(0, *) \simeq \ast$, therefore, the composition \eqref{composition} is homotopically constant. 
This completes the proof that $x$ is terminal in $\C$. 
\end{proof}

\noindent \textbf{Proof of Theorem \ref{higher_Brown}.} Let $F \colon \C^{\op} \to \Sn$ be a functor which satisfies (B1)--(B2). Then it suffices to show that the $\infty$-category $F_{*/}$ admits an initial object; indeed this property is a necessary and sufficient criterion for the representability of $F$ by Proposition \ref{characterization_adjoint_functor}(2)$\Leftrightarrow$(3). Note that $F_{*/}$ is locally small and equivalent to an $n$-category. Moreover, $F_{*/}$ admits small products by Corollary \ref{limits_slices_under_objects}. It suffices then to show that $F_{*/}$ satisfies the rest of the assumptions of (the dual of) Lemma \ref{criterion_C} (Criterion C). The desired result would then follow directly by applying that lemma.  

We show first that $F_{*/}$ admits weak pullbacks of order $(n-2)$ -- this does not follow from Corollary \ref{limits_slices_under_objects} because $F$ does not preserve weak pullbacks of order $(n-1)$ in general! To see the existence of these weak pullbacks, consider a diagram in $F_{*/}$ depicted as follows: 
$$
\xymatrix{
& (c_2, x_2 \in F(c_2)) \ar[d] \\
(c_1, x_1 \in F(c_1)) \ar[r] & (c_0, x_0 \in F(c_0)) 
}
$$
where the notation $(c, x \in F(c))$ refers to the object $(c, * \xrightarrow{x} F(c))$. More specifically, this diagram consists of a diagram in $\C$
$$
\xymatrix{
c_0 \ar[r] \ar[d] & c_2 \\
c_1 &
}
$$
together with a square/cone in $\Sn$
$$
\xymatrix{
\Delta^0 \ar[r]^{x_2} \ar[d]_{x_1} \ar[dr]^{x_0} & F(c_2) \ar[d] \\
F(c_1) \ar[r] & F(c_0).
}
$$
We may form a weak pushout of order $(n-1)$ in $\C$:
$$
\xymatrix{
c_0 \ar[r] \ar[d] & c_2 \ar[d] \\
c_1 \ar[r] & c
}
$$
and lift the canonical map $\Delta^0 \to F(c_1) \times_{F(c_0)} F(c_2)$ along the $(n-1)$-connected map $F(c) \to F(c_1) \times_{F(c_0)} F(c_2)$. We obtain in this way a cone on our original diagram in $F_{*/}$
$$
\xymatrix{
(c, x \in F(c)) \ar[r] \ar[d] & (c_2, x_2 \in F(c_2)) \ar[d] \\
(c_1, x_1 \in F(c_1)) \ar[r] & (c_0, x_0 \in F(c_0)).
}
$$
We claim that this square is a weak pullback of order $(n-2)$. By Proposition \ref{characterization_weak_colimit}, this holds if for any $(c', x' \in F(c'))$, the
canonical map of mapping spaces in $F_{*/}$
\smallskip
\begin{equation} \label{Brown_comparison_map} \tag{*}
\small{
\map((c', x'), (c, x)) \to \map((c', x'), (c_1, x_1)) \times_{\map((c', x'), (c_0, x_0))} \map((c', x'), (c_2, x_2))
}
\end{equation}
\smallskip
is $(n-2)$-connected. Using that $F_{*/}$ is a (homotopy) pullback of $\infty$-categories, we have an identification of its mapping spaces,
$$\map((c', x'), (c, x)) \simeq \map_{\C}(c, c') \times_{\map(F(c'), F(c))} \map((F(c'), x'), (F(c), x)),$$
and similarly for the other mapping spaces in \eqref{Brown_comparison_map}. Thus, we have a pullback square of spaces
\begin{equation} \label{Brown_pullback_square} \tag{**}
\xymatrix{
\map((c', x'), (c,x)) \ar[d] \ar[r] & \map(c, c') \ar[d] \\
\Delta^0 \ar[r]^x & F(c)
}
\end{equation}
where the right vertical map is given by applying $F$ and evaluating at $x' \in F(c')$. There are of course similar pullbacks for the other 3 terms in \eqref{Brown_comparison_map}. Therefore we may identify \eqref{Brown_comparison_map} with the induced map between vertical fibers (over $x \in F(c)$) in the following square:
$$
\xymatrix{
\map(c, c') \ar[rr] \ar[d] && \map(c_1, c') \times_{\map(c_0, c')} \map(c_2, c') \ar[d] \\
F(c) \ar[rr] && F(c_1) \times_{F(c_0)} F(c_2). 
}
$$
Since both horizontal maps are $(n-1)$-connected, it follows that the induced map between the fibers is $(n-2)$-connected, as required. This shows that $F_{*/}$ admits weak pullbacks of order $(n-2)$. 

We show next that the opposite $\infty$-category of $F_{*/}$ admits a set of weak generators. Consider the following set of objects in $F_{*/}$, 
$$\mathcal{G}' = \{ (g, x \in F(g)) \ | \  g \in \mathcal{G} \text{ and } x \colon * \to F(g) \},$$
where $\mathcal{G}$ denotes a set of weak generators in $\C$. Then it is easy to see using \eqref{Brown_pullback_square} that $\mathcal{G}'$ defines a set of weak generators in the opposite $\infty$-category of 
$F_{*/}$, since $\mathcal{G}$ is a set of weak generators in $\C$ by assumption. 

Lastly, we claim that the objects in $\mathcal{G}'$ are compact in the opposite of $F_{*/}$. To see this, we must first clarify the relevant choice of distinguished $\N$-cones in the opposite of $F_{*/}$. Let $T \colon \N \to (F_{*/})^{\op}$ be an $\N$-diagram, depicted as follows
$$(t_0, x_0) \to (t_1, x_1) \to \cdots \to (t_n, x_n) \to \cdots$$
The composite diagram $\N \xrightarrow{T} (F_{*/})^{\op} \rightarrow \C$ has a distinguished cone $T^{\triangleright}_{\C} \colon \N^{\triangleright} \to \C$, which is given by a (distinguished) weak $\N$-colimit of order $(n-1)$:
$$t_0 \to t_1 \to \cdots \to t_n \to \cdots \to t_{\infty}.$$ 
Equivalently, the weak $\N$-colimit $T^{\triangleright}_{\C}$ is determined by a weak pushout in $\C$ of order $(n-1)$ (`telescope construction'):
\begin{equation} \label{telescope} \tag{***}
\xymatrix{
\bigsqcup_{i \in \N} (t_i \sqcup t_i) \ar[r] \ar[d] & \bigsqcup_{i \in \N} t_i \ar[d] \\ 
\bigsqcup_{i \in \N} t_i \ar[r] & t_{\infty}.
}
\end{equation}
Consider the induced tower in $\Sn$ which is obtained after applying $F$, 
$$F(t_0) \leftarrow F(t_1) \leftarrow \cdots \leftarrow F(t_n) \leftarrow \cdots \leftarrow F(t_{\infty}).$$
Since $F$ satisfies (B1) and (B2), it follows from \eqref{telescope} that the canonical map of spaces
$$F(t_{\infty}) \to \mathrm{lim}_{i \in \N^{\op}} F(t_i)$$
is $(n-1)$-connected. In particular, it is $0$-connected, so we may choose a point $x_{\infty} \in F(t_{\infty})$ and extend the tower of pointed spaces induced by $T$, 
$$(F(t_0), x_0) \leftarrow (F(t_1), x_1) \leftarrow \cdots \leftarrow (F(t_n), x_n) \leftarrow \cdots,$$
to a cone of pointed spaces   
$$(F(t_0), x_0) \leftarrow (F(t_1), x_1) \leftarrow \cdots \leftarrow (F(t_n), x_n) \leftarrow \cdots (F(t_{\infty}), x_{\infty})$$
that simultaneously lifts (the opposite of) $T^{\triangleright}_{\C}$. This cone and the cone $T^{\triangleright}_{\C}$ determine a distinguished cone in $(F_{*/})^{\op}$ on the $\N$-diagram $T$. This process defines the choice of distinguished cones on $\N$-diagrams in the opposite $\infty$-category of $F_{*/}$. 

It is easy to see that the objects of $\mathcal{G}'$ are compact (with respect to the distinguished $\N$-cones constructed above) by using the description of the mapping spaces in \eqref{Brown_pullback_square} and the assumption that $\mathcal{G}$ consists of compact objects in $\C$. Indeed, a distingushed 
$\N$-cone in the opposite of $F_{*/}$ consists of a distinguished weak $\N$-colimit in $\C$ of order $(n-1)$, 
$$t_0 \to t_1 \to \cdots \to t_n \to \dots \to t_{\infty},$$
together with a compatible sequence of points $x_i \in F(t_i)$ for all $i=0, 1, \ldots, \infty$. To see that an object $(g, x) \in \mathcal{G}'$ is compact in 
$(F_{*/})^{\mathrm{op}}$, we need to show that the canonical map involving mapping spaces in $F_{*/}$ (note the change of variance!)
$$\colim_{i \in \N} \map((t_i, x_i), (g, x)) \to \map((t_{\infty}, x_{\infty}), (g, x))$$
is an equivalence. Using \eqref{Brown_pullback_square}, the last map of spaces is obtained from the equivalence
$$\colim_{i \in \N} \map(g, t_i) \xrightarrow{\simeq} \map(g, t_{\infty}),$$
viewed as a map over $F(g)$, by passing to the fibers over $x \in F(g)$.

This completes the proof that the opposite $\infty$-category of $F_{*/}$ satisfies the assumptions of Lemma \ref{criterion_C}. So, by Lemma \ref{criterion_C}, the $\infty$-category $F_{*/}$ admits an initial object, therefore $F$ is representable.  
\qed

\begin{remark} \label{clarify_terms_2}
We clarify that \cite[Theorem 5.2.7]{NRS} is not the special case of Theorem \ref{higher_Brown} for $n = \infty$ -- this issue about the terminology 
was also pointed out in Remark \ref{clarify_terms}. Instead, \cite[Theorem 5.2.7]{NRS} is a special case of Theorem \ref{higher_Brown} for $n=1$. 
\end{remark}

%\begin{example} \label{loc_pres_Brown}
%By Theorem \ref{higher_Brown} and Example \ref{spaces_example}, every finitely presentable $n$-category satisfies Brown representability. 
%It follows that from this that every locally presentable category satisfies Brown representability. This can be shown by applying Proposition 
%\ref{local_preserves_Brown} and using the fact that every locally presentable category is equivalent to a full reflective subcategory of a locally finitely presentable category (see \cite{AR}). (Alternatively, this can also be shown directly by applying \cite[Theorem 1.3]{He}.) As a consequence, using Remark \ref{Brown_for_cocomplete}, we conclude also the following statement: given a functor $F \colon \C \to \D$ between ordinary (locally small) categories, where $\C$ is locally presentable, the functor $F$ is a left adjoint if and only if $F$ preserves small coproducts and weak pushouts.
%\end{example}

\begin{example} \label{pres_Brown}
By Theorem \ref{higher_Brown} and Example \ref{spaces_example}, the homotopy $n$-category of a finitely presentable $\infty$-category satisfies 
Brown representability for any $n \geq 2$. 
\end{example}

\begin{example} \label{stable_pres_Brown}
By Theorem \ref{higher_Brown} and Example \ref{spectra_example}, the homotopy $n$-category of a finitely presentable stable $\infty$-category satisfies Brown representability for any $n \geq 1$.  
\end{example}

\begin{corollary} \label{cor_Brown_rep_loc}
Let $\D$ be an $n$-category which is a localization of a compactly generated $n$-category, where $n \geq 1$ is an integer or $n=\infty$. Then $\D$ satisfies Brown representability.
\end{corollary}
\begin{proof}
This follows directly from Theorem \ref{higher_Brown} and Proposition \ref{local_preserves_Brown}.
\end{proof}

\subsection{Presentable $\infty$-categories} We will use the following general structure theorem for presentable $\infty$-categories from \cite{HTT, HA}:

\begin{theorem} \label{pres_structure_thm}
Every presentable (stable) $\infty$-category is equivalent to a localization of a finitely presentable (stable) $\infty$-category. 

As a consequence, the homotopy $n$-category of a presentable (stable) $\infty$-category is equivalent to a localization of the 
homotopy $n$-category of a finitely presentable (stable) $\infty$-category. 
\end{theorem}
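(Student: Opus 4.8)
The plan is to treat the two assertions separately and to deduce the second (the statement about homotopy $n$-categories) from the first by a general functoriality argument for $\h_n$. For the first assertion in the non-stable case, I would invoke \cite[Theorem 5.5.1.1]{HTT}: every presentable $\infty$-category $\C$ is an accessible localization of a presheaf $\infty$-category $\mathcal{P}(\mathcal{K}) = \mathrm{Fun}(\mathcal{K}^{\op}, \S)$ for some small $\infty$-category $\mathcal{K}$. It then remains to recall that $\mathcal{P}(\mathcal{K})$ is finitely presentable: the representable presheaves are compact, since their corepresented functors are the evaluation functors, which preserve all small colimits (computed pointwise in $\mathcal{P}(\mathcal{K})$), and the representables generate $\mathcal{P}(\mathcal{K})$ under small colimits by \cite[Corollary 5.1.5.8]{HTT}.

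In the stable case I would replace presheaves of spaces by presheaves of spectra. For any small $\mathcal{K}$, the $\infty$-category $\mathrm{Fun}(\mathcal{K}^{\op}, \mathcal{S}p)$ is stable (a functor category into a stable $\infty$-category) and finitely presentable, with compact generators the free spectral presheaves, whose corepresented functors are $\Omega^\infty \circ \mathrm{ev}_k$ and hence preserve filtered colimits. Given a presentable stable $\C$, I would start from a localization $L \colon \mathcal{P}(\mathcal{K}) \to \C$ as above and apply the stabilization functor $\mathcal{S}p \otimes (-)$ on the $\infty$-category $\mathrm{Pr}^L$ of presentable $\infty$-categories and left adjoints: this sends $\mathcal{P}(\mathcal{K})$ to $\mathrm{Fun}(\mathcal{K}^{\op}, \mathcal{S}p)$ and, since $\C$ is already stable, fixes $\C$, the unit $\C \to \mathcal{S}p \otimes \C$ being an equivalence. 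The remaining point is that $\mathcal{S}p \otimes (-)$ carries the localization $L$ to a localization $\mathrm{Fun}(\mathcal{K}^{\op}, \mathcal{S}p) \to \C$; here I would use that a localization in $\mathrm{Pr}^L$ is a pushout forcing a set of morphisms to become invertible, and that $\mathcal{S}p \otimes (-)$, being a left adjoint on $\mathrm{Pr}^L$, preserves such pushouts. This is precisely the step where one must check that the resulting right adjoint stays fully faithful, and it is the main obstacle in the first assertion; alternatively one may simply cite the structure theory for presentable stable $\infty$-categories in \cite{HA}.

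For the second assertion, suppose $\C$ is a localization of a (stable) finitely presentable $\infty$-category $\mathcal{E}$, realized as a reflective full subcategory $i \colon \C \hookrightarrow \mathcal{E}$ with left adjoint $L$. I would show directly that $\h_n$ carries this to a reflective localization $\h_n\C \subseteq \h_n\mathcal{E}$. First, $\h_n(i)$ is fully faithful: the mapping spaces in $\h_n$ are the $(n-1)$-truncations of those of the underlying $\infty$-category, and an equivalence $\map_{\C}(x,y) \xrightarrow{\simeq} \map_{\mathcal{E}}(ix, iy)$ remains an equivalence after $(n-1)$-truncation. Second, $\h_n(L)$ is left adjoint to $\h_n(i)$: since $\h_n$ preserves finite products and composition of functors, it carries the unit transformation $\mathrm{id}_{\mathcal{E}} \Rightarrow iL$ to a natural transformation $\mathrm{id}_{\h_n\mathcal{E}} \Rightarrow \h_n(i)\,\h_n(L)$, and the truncated adjunction equivalences $\map_{\h_n\C}(\h_n(L)\,c, d) \simeq \map_{\h_n\mathcal{E}}(c, \h_n(i)\,d)$ exhibit this as a unit, by the mapping-space criterion for adjunctions recalled before Proposition \ref{characterization_adjoint_functor}. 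Hence $\h_n(i)$ is a fully faithful right adjoint, so $\h_n\C$ is a localization of $\h_n\mathcal{E}$, which is the homotopy $n$-category of a (stable) finitely presentable $\infty$-category, as required.
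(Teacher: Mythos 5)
Your proposal is correct, and for the unstable case it coincides with the paper's proof, which simply cites \cite[Theorem 5.5.1.1]{HTT} (your added observation that $\mathcal{P}(\mathcal{K})$ is finitely presentable, via compact representables generating under colimits, is the implicit content of that citation). Where you diverge is the stable case: the paper disposes of it with a single citation of \cite[Proposition 1.4.4.9]{HA}, whereas you sketch an independent derivation by applying $\mathcal{S}p \otimes (-)$ in $\mathrm{Pr}^L$ to a localization $\mathcal{P}(\mathcal{K}) \to \C$. That route does work, and the obstacle you flag can be closed cleanly: since the localization is accessible, $\C$ is obtained from $\mathcal{P}(\mathcal{K})$ by inverting a \emph{small set} $S$ of morphisms (\cite[Proposition 5.5.4.15]{HTT}), inverting a set of maps is a pushout in $\mathrm{Pr}^L$ along copies of $\mathcal{P}(\Delta^1) \to \mathcal{P}(\Delta^0)$, and $\mathcal{S}p \otimes (-)$ preserves pushouts because $\mathrm{Pr}^L$ is closed symmetric monoidal; the fully-faithfulness of the resulting right adjoint is then automatic, because inverting a set of maps in a presentable $\infty$-category is again an accessible reflective localization. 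Two minor points: the compact generators of $\mathrm{Fun}(\mathcal{K}^{\op}, \mathcal{S}p)$ should include the desuspensions $\Sigma^{\infty}_{+}j(k)[-m]$ (desuspension is not a colimit, so the free presheaves alone do not generate under colimits), and your fallback of citing \cite{HA} is literally the paper's proof, so the two arguments converge there. Finally, your explicit verification that $\h_n$ carries a reflective localization $i \colon \C \hookrightarrow \mathcal{E}$, $L \dashv i$, to a reflective localization $\h_n\C \subseteq \h_n\mathcal{E}$ -- fully-faithfulness surviving $(n-1)$-truncation of mapping spaces, and the truncated unit exhibiting $\h_n(L) \dashv \h_n(i)$ -- is a genuine service: the paper asserts this step with ``as a consequence'' and no argument, and your proof is the right one.
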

\begin{proof}
The general case follows from \cite[Theorem 5.5.1.1]{HTT}. The stable case follows from 
\cite[Proposition 1.4.4.9]{HA}. 
\end{proof}

Combining Theorem \ref{pres_structure_thm} with our previous results, we obtain the following general class of examples of 
locally small weakly cocomplete $n$-categories which satisfy Brown representability (cf. Corollary \ref{cor_Brown_rep_loc}). 

\begin{corollary} \label{higher_Brown_2} Let $n \geq 1$ be an integer or $n=\infty$.
\begin{enumerate}
\item Suppose that $\C$ is a presentable stable $\infty$-category. Then $\h_n\C$ satisfies Brown representability.
\item Suppose that $\C$ is a presentable $\infty$-category. Then $\h_n\C$ satisfies Brown representability for any $n \geq 2$. 
\end{enumerate}
\end{corollary}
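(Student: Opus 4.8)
The plan is to deduce both statements by chaining together three results already established in the paper: the structure theorem for presentable $\infty$-categories (Theorem \ref{pres_structure_thm}), the Brown representability theorem for compactly generated $n$-categories (Theorem \ref{higher_Brown}), and the stability of Brown representability under localizations (Proposition \ref{local_preserves_Brown}). No new argument is really needed; the work consists in feeding the correct example into the machine.

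First I would treat case (1). Since $\C$ is a stable presentable $\infty$-category, Theorem \ref{pres_structure_thm} exhibits $\h_n\C$ as (equivalent to) a localization of $\h_n\C'$, where $\C'$ is a stable finitely presentable $\infty$-category. By Example \ref{spectra_example}, the homotopy $n$-category $\h_n\C'$ is compactly generated for every $n \geq 1$, and hence satisfies Brown representability by Theorem \ref{higher_Brown}. Applying Proposition \ref{local_preserves_Brown} to the localization $\h_n\C' \to \h_n\C$ then shows that $\h_n\C$ satisfies Brown representability, as desired.

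Case (2) is entirely analogous, with a single change in the input. For presentable (not necessarily stable) $\C$, Theorem \ref{pres_structure_thm} exhibits $\h_n\C$ as a localization of $\h_n\C'$ for a finitely presentable $\infty$-category $\C'$, and Example \ref{spaces_example} guarantees that $\h_n\C'$ is compactly generated precisely in the range $n \geq 2$. This is exactly the restriction that forces the hypothesis $n \geq 2$ in the statement. The remaining two steps are unchanged: Theorem \ref{higher_Brown} yields Brown representability for $\h_n\C'$, and Proposition \ref{local_preserves_Brown} transports it along the localization to $\h_n\C$.

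I do not anticipate a genuine obstacle, since every ingredient is in place; the only point requiring care is to confirm that the localization produced by Theorem \ref{pres_structure_thm} is a localization in the sense used by Proposition \ref{local_preserves_Brown} -- that is, a full subcategory whose inclusion admits a left adjoint -- so that the hypotheses of that proposition are literally satisfied. It is also worth flagging the source of the asymmetry between the two cases: in the stable setting a finitely presentable $\infty$-category already possesses a set of compact weak generators detectable at the level of its ordinary homotopy category (Proposition \ref{weak_gen_characterization}(d)), which is why (1) holds for all $n \geq 1$, whereas in the non-stable case the generating spheres only detect equivalences after passing to $\h_2$, which is what pushes the threshold up to $n \geq 2$ in (2).
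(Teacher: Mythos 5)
Your proposal is correct and matches the paper's own proof essentially verbatim: the paper likewise reduces to the finitely presentable case via Theorem \ref{pres_structure_thm} together with Proposition \ref{local_preserves_Brown}, and then invokes Theorem \ref{higher_Brown} through Examples \ref{pres_Brown} and \ref{stable_pres_Brown} (i.e., Examples \ref{spaces_example} and \ref{spectra_example}), with the $n \geq 2$ restriction in case (2) arising exactly as you describe. Your added check that the localization is one in the sense of Proposition \ref{local_preserves_Brown} is a reasonable point of care, but no further argument is needed.
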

\begin{proof}
By Theorem \ref{pres_structure_thm} and Proposition \ref{local_preserves_Brown}, it suffices to prove (1)--(2) in the case where the $\infty$-category $\C$ is finitely presentable. This is the special case of Theorem \ref{higher_Brown} for the Examples \ref{pres_Brown} and \ref{stable_pres_Brown}. 
\end{proof}

We note that Corollary \ref{higher_Brown_2}(2) fails for $n=1$, e.g., it fails for the usual homotopy category of spaces (see \cite{He}). Combining Corollary \ref{higher_Brown_2} and Corollary \ref{Brown_implies_adjoints}, we obtain the following left adjoint functor theorem for homotopy $n$-categories of presentable $\infty$-categories.

\begin{corollary} \label{higher_Brown_3}
Let $\C$ be a presentable $\infty$-category and let $\D$ be a locally small $n$-category, where $n \geq 1$ is an integer or $n = \infty$.
\begin{enumerate}
\item Suppose that $\C$ is stable. Then a functor $F \colon \h_n \C \to \D$ admits a right adjoint 
if and only if $F$ preserves small coproducts and weak pushouts of order $(n-1)$.
\item Suppose that $n \geq 2$. Then a functor $F \colon \h_n \C \to \D$ admits a right adjoint if and only if $F$ preserves small coproducts and weak pushouts of order $(n-1)$.
\end{enumerate}
\end{corollary}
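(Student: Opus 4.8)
The plan is to deduce both statements directly from Proposition \ref{Brown_implies_adjoints} once we have verified that $\h_n\C$ falls within its scope. Recall that Proposition \ref{Brown_implies_adjoints} asserts that for a locally small weakly cocomplete $n$-category $\A$ satisfying Brown representability, a functor $\A \to \D$ admits a right adjoint if and only if it preserves small coproducts and weak pushouts of order $(n-1)$; the claimed characterizations are precisely this assertion with $\A = \h_n\C$. Thus the entire task reduces to checking that $\h_n\C$ satisfies the three hypotheses of Proposition \ref{Brown_implies_adjoints}.

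First I would record local smallness: since $\C$ is presentable it is locally small, and the mapping spaces of $\h_n\C$ are the $(n-1)$-truncations of those of $\C$, hence again small; so $\h_n\C$ is locally small. Next, since every presentable $\infty$-category is cocomplete, Example \ref{weakly_cocomplete_example} shows that $\h_n\C$ is a weakly cocomplete $n$-category. The remaining and only substantive point is that $\h_n\C$ satisfies Brown representability, and this is supplied by Corollary \ref{higher_Brown_2}: part (1) of that corollary (valid for all $n$ when $\C$ is stable) yields the hypothesis for statement (1), while part (2) (valid for $n \geq 2$ and arbitrary presentable $\C$) yields the hypothesis for statement (2).

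With all three hypotheses in place, Proposition \ref{Brown_implies_adjoints} applies verbatim and gives in each case the desired equivalence between admitting a right adjoint and preserving small coproducts together with weak pushouts of order $(n-1)$. There is no genuine obstacle here, since the hard content has already been absorbed into Theorem \ref{higher_Brown} and its consequence Corollary \ref{higher_Brown_2} (which in turn rest on the structure theorem \ref{pres_structure_thm} and the localization-invariance of Brown representability in Proposition \ref{local_preserves_Brown}). The only care required is the bookkeeping of the two regimes -- the unrestricted-$n$ stable case versus the $n \geq 2$ general case -- and matching each to the corresponding part of Corollary \ref{higher_Brown_2}.
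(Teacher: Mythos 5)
Your proposal is correct and matches the paper's own proof, which likewise deduces the corollary by combining Corollary \ref{higher_Brown_2} (Brown representability of $\h_n\C$, in the two regimes) with Proposition \ref{Brown_implies_adjoints}; your explicit verification of local smallness and weak $n$-cocompleteness of $\h_n\C$ is a reasonable spelling-out of hypotheses the paper leaves implicit. No gaps.
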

Note that the case $n=\infty$ recovers the left adjoint functor theorem for presentable $\infty$-categories \cite[Corollary 5.5.2.9(1)]{HTT}, \cite[Section 4]{NRS}.

\end{document}